\DeclareMathOperator{\prox}{prox}
\DeclareMathOperator{\env}{env}
\DeclareMathOperator{\argmin}{argmin}
\DeclareMathOperator{\id}{Id}
\DeclareMathOperator{\SPF}{SPF}
\newcommand{\x}[1]{x^{(#1)}}
\newcommand{\y}[1]{y^{(#1)}}
\newcommand{\uu}[1]{u^{(#1)}}
\newcommand{\ttheta}[1]{\theta^{(#1)}}
\newcommand{\xx}[1]{\tilde{x}^{(#1)}}
\newcommand{\yy}[1]{\tilde{y}^{(#1)}}
\newcommand{\phee}{\varphi}
\newtheorem{theorem}{Theorem}
\newtheorem{lemma}{Lemma}
\newtheorem{definition}{Definition}
\newtheorem{proposition}{Proposition}
\theoremstyle{remark}
\providecommand{\keywords}[1]{{\textit{Keywords:}} #1}
\providecommand{\classification}[1]{Mathematics Subject Classification 2000: #1}
\begin{document}

\title{Algorithmic Versatility of SPF-regularization Methods }
\author{Lixin Shen\thanks{Department of Mathematics, Syracuse University, Syracuse, NY 13244, USA. Email: \texttt{lshen03@syr.edu}} \and {Bruce W. Suter}\thanks{Air Force Research Laboratory, Rome, NY. Email: \texttt{bruce.suter@us.af.mil.}}
\and Erin E. Tripp\thanks{Department of Mathematics, Syracuse University, Syracuse, NY 13244, USA. Email: \texttt{eetripp@syr.edu}}}

\maketitle

\begin{abstract}
\noindent 
Sparsity promoting functions (SPFs) are commonly used in optimization problems to find solutions which are assumed or desired to be sparse in some basis. For example, the $\ell_1$-regularized variation model and the Rudin-Osher-Fatemi total variation (ROF-TV) model are some of the most well-known variational models for signal and image denoising, respectively. However, recent work demonstrates that convexity is not always desirable in sparsity promoting functions. In this paper, we replace convex SPFs with their induced nonconvex SPFs and develop algorithms for the resulting model by exploring the intrinsic structures of the nonconvex SPFs. We also present simulations illustrating the performance of the SPF and the developed algorithms in image denoising.

\vspace{\baselineskip}

\noindent \keywords{Nonlinear optimization; sparsity promoting; variational models; image denoising.}

\vspace{\baselineskip} 

\noindent \classification{68U10, 65T60.}
\end{abstract}

%%%%%%%%%%%%%%%%%%%%%%%%%%%%%%%%%%%
%%%%%%%%%%-SECTION:INTRODUCTION-%%%%%%%%%%
%%%%%%%%%%%%%%%%%%%%%%%%%%%%%%%%%%%

\section{Introduction}
Sparsity is identified as a crucial assumption in various applications ranging from signal processing to machine learning and statistics. The widespread interest in sparsity can be attributed to the fact that (i) sparsity infers intrinsic structures of data and (ii) sparse data is easier to manipulate and interpret. Informally, data in the form of vector or matrix is sparse if it contains few nonzero entries. The natural mathematical measure of sparsity is the so-called ``$\ell_0$-norm", which counts the number of nonzero entries in a vector. In the context of optimization, this measure can be viewed as a penalty on non-sparse solutions, and it is in this context that we call the $\ell_0$-norm a sparsity promoting function (SPF). However, solving $\ell_0$-penalized optimization problems is known to be NP-hard. To overcome this difficulty, $\ell_1$-regularization methods such as least absolute shrinkage and selection operator (LASSO) \cite{Tibshirani:jrss:96} and Dantzig selectors \cite{Candes-Tao:IEEE-TIT:06} have been proposed. This relaxation allows application of the many tools of convex analysis, making the problem numerically tractable, but it also introduces bias by heavily penalized entries with large magnitude. To address this, nonconvex penalties have been proposed to replace the $\ell_1$-penalty, including the $\ell_p$-norm with $0 < p < 1$ \cite{Chen-Shen-Suter:IET:16,Frank-Friedman:Technometrics:1993}, the smoothly clipped absolute deviation penalty \cite{Fan-Li:JASA:01}, the continuous exact $\ell_0$ penalty \cite{Soubies-Blanc-Feraud-Aubert:SIAMOPT:2017}, and the minimax concave penalty (MCP) \cite{Zhang:AS:2010}. There is increasing evidence that supports  the use of nonconvex penalties in many applications, see, for example  \cite{An-Tao:AOR:2005,lethi-etal:Eu-JOR:2015,Yin-Lou-He-Xin:SIAMSC:2015} and the references therein. Like the $\ell_0$-norm, these penalty functions are all widely accepted as SPF, and, as noted in \cite{Fan-Li:JASA:01}, they all share certain essential properties.

Based on these observations, we have attempted to give a formal mathematical definition of SPFs in our recent work \cite{Shen-Suter-Tripp:2019}. Loosely speaking, a function is a SPF if its subdifferential at the origin contains the origin and at least one other element; that is, a SPF has a corner or cusp at the origin. Viewed another way, the subdifferential of the function at the origin is a set which defines a threshold for ``small" entries which are considered noise. In terms of the proximity operator, the proximity operator of the SPF will send all elements under this threshold to the origin. Fortunately, all of the above penalties fit this definition.

In \cite{Shen-Suter-Tripp:2019},  we introduced a family of SPFs each of which is the difference of a convex SPF with its Moreau envelope.
%That is, given a convex sparsity promoting function $\phee$ and a parameter $\alpha > 0$, we define
%\begin{equation*}
%\phee_\alpha = \phee - \env_\alpha \phee.
%\end{equation*}
Functions in this family have the desired nonconvexity but enough useful properties to develop efficient algorithms for optimization problems penalized by these SPFs. These functions are non-negative, semiconvex, and a special case of difference of convex functions with one term having a Lipschitz continuous gradient. Due to these properties, we refer to these functions as structured SPFs. As an example, the MCP is a particular instance of this construction. Many other examples and interesting properties of the structured SPFs can be found in \cite{Shen-Suter-Tripp:2019}.

The goal of this paper is to demonstrate the applicability of structured SPFs to a variety of optimization models. To illustrate these ideas, we consider the regularized least squares model:
\begin{equation}\label{eq:MainGoal}
\argmin \left\{ \frac{1}{2\lambda} \|x-z\|^2 + \left( \Phi \circ B \right)(x) : x \in C \right\},
\end{equation}
where $C$ is a closed convex subset of $\mathbb{R}^d$, $\lambda$ is a regularization parameter, $z \in \mathbb{R}^d$, $B \in \mathbb{R}^{n \times d}$, and $\Phi$ is a sparsity promoting function on $\mathbb{R}^n$. We note that all of the discussion and results below hold true if the quadratic term is replaced by a differentiable strongly convex function. We simply choose this model as our prototype because of its simplicity as well as its applicability. Problems of interest in the context of image/signal processing at large can be formulated as finding a solution to \eqref{eq:MainGoal}. For example, if $z$ is an image corrupted by Gaussian noise, $\Phi \circ B$ is a composition of the $\ell_2$-norm with the two-dimensional first order difference operator, model \eqref{eq:MainGoal} reduces to the well known Rudin-Osher-Fatemi total variation (ROF-TV) model. If $\Phi$ is the $\ell_1$-norm and $B$ is formulated from a tight framelet, then the resulting model \eqref{eq:MainGoal} was discussed in \cite{shen:IEEEIP:06}.

We propose replacing convex $\Phi$ with the structured SPFs. The flexibility provided by these functions allows us to approach \eqref{eq:MainGoal} from several perspectives and to make use of algorithmic advances in convex, difference of convex, and nonconvex optimization.
In each case, we are able to see how the structures of the SPF plays out in the algorithms and to what effect.  More precisely, three different algorithms for model~\eqref{eq:MainGoal} will be proposed by fully employing the various properties of $\Phi$. The first algorithm explores the semiconvexity property of $\Phi$ to identify the objective function of \eqref{eq:MainGoal} in a form which can be optimized by the primal-dual splitting algorithm in \cite{Condat:JOTA:2013}. The second algorithm is based on the natural difference of convex form of $\Phi$, by its design, so that the difference of convex algorithm (DCA), e.g., in \cite{LeThi:MP:2018,lethi-etal:Eu-JOR:2015,Tao-An:SIAMOPT:1998}, can be applied directly. As shown in our previous work \cite{Shen-Suter-Tripp:2019}, the proximity operators of many constructed structured SPFs have explicit expressions available, but, not utilizing it in the development of the above two algorithms. The third algorithm makes use of the explicit form of the proximity operator of $\Phi$. The convergence analysis of these algorithms and their applications in image denoising will be provided. Numerical results demonstrate increased accuracy without additional computational time in many instances.

The rest of the paper is organized in the following manner. In the next section we recall some necessary background in optimization, briefly review the definition of structured SPFs, and point out some properties of these functions that will be explored in the development of algorithms suitable for model~\eqref{eq:MainGoal}. We also give examples that can be used in  the model~\eqref{eq:MainGoal} for image denoising application. In Section~\ref{sec:Algorithms}, the properties of structured SPFs are used to develop efficient algorithms for  model~\eqref{eq:MainGoal}. Numerical experiments are conducted in Section~\ref{sec:experiments} to demonstrate the performance of the developed algorithms in image denoising. Our conclusions are drawn in Section~\ref{sec:conclusions}.

%%%%%%%%%%%%%%%%%%%%%%%%%
%%%%%%%%-SECTION:SSPF-%%%%%%%%
%%%%%%%%%%%%%%%%%%%%%%%%%

\section{Structured Sparsity Promoting Functions}\label{sec:SSPF}
In this section, we define precisely what we mean by sparsity promoting functions (SPFs) as well as the family of structured SPFs introduced in our recent paper \cite{Shen-Suter-Tripp:2019}. The relevant results to this paper are presented, and two examples of interest are provided in detail.

We begin by introducing our notation. We denote by $\mathbb{R}^d$ the usual $d$-dimensional Euclidean space  equipped with the standard inner product $\langle \cdot, \cdot \rangle$ and the induced Euclidean norm $\|\cdot\|$. A function $p$ defined on $\mathbb{R}^d$ with values in $\mathbb{R} \cup \{+\infty\}$ is proper if its domain $\mathrm{dom}(p)=\{x\in \mathbb{R}^d: p(x)<+\infty\}$ is nonempty, and $p$ is  lower semicontinuous if its epigraph is a closed set. The set of proper and lower semicontinuous functions on  $\mathbb{R}^d$ to $\mathbb{R} \cup \{+\infty\}$ is denoted by $\Gamma(\mathbb{R}^d)$. The set of proper, convex, and lower semicontinuous functions on  $\mathbb{R}^d$ to $\mathbb{R} \cup \{+\infty\}$ is denoted by $\Gamma_0(\mathbb{R}^d)$.

The subdifferential and proximity operator of a  lower semicontinuous function are two important concepts in nonlinear optimization. We review some aspects of these concepts that are needed in this paper.  Recall that the Fr\'{e}chet subdifferential of a function $p: \mathbb{R}^d \rightarrow \mathbb{R} \cup \{+\infty\}$ at $z \in \mathbb{R}^d$, denoted by $\partial p (z)$, is defined as
$$
\partial p(z):=\left\{t\in\mathbb{R}^d: \liminf_{u\rightarrow z} \frac{p(u)-p(z)-\langle t, u-z\rangle}{\|u-z\|}\ge 0\right\}.
$$
The set $\partial p(z)$ is closed and convex. If $\partial p(z) \neq \emptyset$, we say that $p$ is Fr\'{e}chet subdifferentiable at $z$. If $p$ is convex, then
$\partial p(z):=\left\{t\in\mathbb{R}^d: p(u)-p(z) \ge \langle t, u-z\rangle, \; u\in\mathbb{R}^d\right\}$.
If $p$ is Fr\'{e}chet differentiable at $z$ with a derivative, then $\partial p(z)=\{\nabla p (z)\}$.

We further review some useful simple calculus results for Fr\'{e}chet subdifferentials. If a function $p: \mathbb{R}^d \rightarrow \mathbb{R} \cup \{+\infty\}$ attains its local minimum at $z \in \mathbb{R^d}$, then $0 \in \partial p(z)$ and the point $z$ is called a critical point of $p$. For any $\alpha>0$, it holds that $\partial (\alpha p)(z)=\alpha \partial p(z)$. For any functions $p_1: \mathbb{R}^d \rightarrow \mathbb{R} \cup \{+\infty\}$  and $p_2: \mathbb{R}^d \rightarrow \mathbb{R} \cup \{+\infty\}$ Fr\'{e}chet subdifferentiable at $z$, then $p_1+p_2$ is Fr\'{e}chet subdifferential at $z$ and $\partial(p_1+p_2)(z) \subseteq \partial p_1(z)+\partial p_2(z)$. If one of the above functions is Fr\'{e}chet differentiable at $z$, say $p_1$, then $\partial(p_1+p_2)(z) = \nabla p_1(z)+\partial p_2(z)$.

The proximity operator was introduced by Moreau in \cite{moreau:RASPS:62,moreau:BSMF:65}. For a function $p \in \Gamma(\mathbb{R}^d)$, the proximity operator of $p$ at $z \in  \mathbb{R}^d$ with index $\alpha$ is defined by
$$
\mathrm{prox}_{\alpha p} (z) := \mathrm{arg}\min \left\{p(w)+\frac{1}{2\alpha} \|w-z\|^2: w\in\mathbb{R}^d\right\}.
$$
The proximity operator of $p$ is a set-valued operator from $\mathbb{R}^d \rightarrow 2^{\mathbb{R}^d}$, the power set of $\mathbb{R}^d$. Clearly, for any $w^\star \in \mathrm{prox}_{\alpha p} (z)$,  by the calculus of Fr\'{e}chet subdifferential, we have that
\begin{equation}\label{eq:diff-env}
\frac{1}{\alpha}(z-w^\star) \in \partial p (w^\star).
\end{equation}
The Moreau envelope of  $p$ at $z \in  \mathbb{R}^d$ with index $\alpha$, denoted by $\mathrm{env}_{\alpha} p(z)$ is closed related to the proximity operator $\mathrm{prox}_{p} (z)$. That is,
$$
\mathrm{env}_{\alpha}p (z) := p(w^\star)+\frac{1}{2\alpha} \|w^\star-z\|^2,
$$
where $w^\star$ is in $\mathrm{prox}_{\alpha p} (z)$. If $p$ is convex, then the proximity operator of $p$ is a single-valued operator from $\mathbb{R}^d \rightarrow \mathbb{R}^d$. Furthermore,  equation~\eqref{eq:diff-env} becomes
$$
\frac{1}{\alpha}(\id-\mathrm{prox}_{\alpha p}) (z) \in \partial p (\mathrm{prox}_{\alpha p} (z)).
$$
With this preparation, we are now able to give our definition of sparsity promoting functions and describe some of their properties.

\begin{definition}\label{defn:SPF} We say a function $\phee \in \Gamma(\mathbb{R}^d)$ is sparsity promoting if (i) $\phee$ achieves its global minimum of zero at the origin and (ii) there is a nonzero element in $\partial \phee(0)$.  Denote by $\SPF(\mathbb{R}^d)$ the set of sparsity promoting functions on $\mathbb{R}^d$. \end{definition}

The first item of Definition \ref{defn:SPF} ensures that nonzero entries are penalized. The second item describes the necessary sharpness of SPF, and the set $\partial \phee (0)$ defines what is considered small and therefore what should be sent to zero. For example, if $\phee \in \Gamma_0(\mathbb{R}^d)$, then \eqref{eq:diff-env} becomes
\[ \prox_{\alpha \phee}(x) = 0 \iff x \in \alpha \partial \phee(0). \]
We note that all of the penalties discussed above satisfy this definition, as does any norm on $\mathbb{R}^d$.

Now for any convex $\phee \in \SPF(\mathbb{R}^n)$ and any $\alpha > 0$, we define
\begin{equation}\label{defn:falpha}
\phee_\alpha = \phee - \env_\alpha \phee.
\end{equation}
By construction, $\phee_\alpha$ is a nonnegative difference of convex functions. We summarize relevant properties of $\phee_\alpha$ below. Based on these properties, we refer to these functions as structured SPF's.

\begin{lemma}\label{lemma:properties}
Given a convex function $\phee \in \SPF(\mathbb{R}^d)$ and $\alpha > 0$, the function $\phee_\alpha$ defined by \eqref{defn:falpha} has the following properties:
\begin{enumerate}[(i)]
    \item $\phee_\alpha \in \SPF(\mathbb{R}^d)$ with $\partial \phee_\alpha(0) = \partial \phee(0)$;
    \item $\phee_\alpha$ is $\frac{1}{\alpha}$-semiconvex, i.e. $\phee_\alpha + \frac{1}{2\alpha}\| \cdot \|^2$ is convex;
    \item given $B \in \mathbb{R}^{n\times d}$, $\phee_\alpha \circ B$ is $\frac{\|B\|^2}{\alpha}$-semiconvex.
\end{enumerate}
\end{lemma}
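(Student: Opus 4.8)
The plan is to establish the three items in order, relying throughout on two standard facts about the Moreau envelope of a convex function: that $\env_\alpha \phee \le \phee$ pointwise, and that $\env_\alpha \phee$ is finite-valued and continuously differentiable with $\nabla \env_\alpha \phee(z) = \frac{1}{\alpha}(z - \prox_{\alpha\phee}(z))$.

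For item (i), I would first record that $\phee_\alpha \ge 0$ because $\env_\alpha\phee \le \phee$, and that $\phee_\alpha$ is proper and lower semicontinuous as the difference of the proper lsc function $\phee$ and the finite continuous function $\env_\alpha\phee$. To pin down the minimum, note that $0$ minimizes $\phee$, so $0 \in \partial\phee(0)$, whence $\prox_{\alpha\phee}(0) = 0$ by the characterization $\prox_{\alpha\phee}(z)=0 \iff z\in\alpha\partial\phee(0)$ recorded after Definition~\ref{defn:SPF}; consequently $\env_\alpha\phee(0)=\phee(0)=0$ and $\phee_\alpha(0)=0$, so the global minimum of zero is attained at the origin. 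For the subdifferential I would apply the exact sum rule for a differentiable-plus-subdifferentiable pair to $\phee_\alpha = \phee + (-\env_\alpha\phee)$, obtaining $\partial\phee_\alpha(0) = \partial\phee(0) - \nabla\env_\alpha\phee(0)$; since $\prox_{\alpha\phee}(0)=0$ forces $\nabla\env_\alpha\phee(0)=0$, this collapses to $\partial\phee_\alpha(0)=\partial\phee(0)$, and the nonzero element guaranteed in $\partial\phee(0)$ certifies $\phee_\alpha\in\SPF(\mathbb{R}^d)$.

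Item (ii) is the structural crux, and the key is to show that the single function $\frac{1}{2\alpha}\|\cdot\|^2 - \env_\alpha\phee$ is convex; once this is in hand, adding the convex $\phee$ gives that $\phee_\alpha + \frac{1}{2\alpha}\|\cdot\|^2 = \phee + \bigl(\frac{1}{2\alpha}\|\cdot\|^2 - \env_\alpha\phee\bigr)$ is convex at once. To prove the claim I would complete the square inside the envelope: expanding $\frac{1}{2\alpha}\|w-z\|^2$ yields
\[
\env_\alpha\phee(z) = \frac{1}{2\alpha}\|z\|^2 + \min_{w}\Bigl(\phee(w) + \frac{1}{2\alpha}\|w\|^2 - \frac{1}{\alpha}\langle w, z\rangle\Bigr),
\]
so that
\[
\frac{1}{2\alpha}\|z\|^2 - \env_\alpha\phee(z) = \sup_{w}\Bigl(\frac{1}{\alpha}\langle w, z\rangle - \phee(w) - \frac{1}{2\alpha}\|w\|^2\Bigr).
\]
The right-hand side is a pointwise supremum of functions that are affine in $z$, hence convex. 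This is the step I expect to be the main obstacle, not because it is long but because it is where the precise interaction between $\phee$ and its envelope must be exploited correctly.

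For item (iii), I would reuse the representation from (ii): setting $g := \phee_\alpha + \frac{1}{2\alpha}\|\cdot\|^2$, which is convex, we have $\phee_\alpha = g - \frac{1}{2\alpha}\|\cdot\|^2$, and therefore
\[
(\phee_\alpha\circ B)(x) + \frac{\|B\|^2}{2\alpha}\|x\|^2 = g(Bx) + \frac{1}{2\alpha}\bigl(\|B\|^2\|x\|^2 - \|Bx\|^2\bigr).
\]
Here $g\circ B$ is convex as the composition of a convex function with a linear map, while $\|B\|^2\|x\|^2 - \|Bx\|^2 = x^\top(\|B\|^2 I - B^\top B)x$ is a convex quadratic since $\|B\|^2 I - B^\top B \succeq 0$ (the squared spectral norm dominates every eigenvalue of $B^\top B$). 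The sum of these convex pieces is convex, which is exactly the $\frac{\|B\|^2}{\alpha}$-semiconvexity of $\phee_\alpha\circ B$.
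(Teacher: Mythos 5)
Your proposal is correct, and the comparison splits naturally by item. For item (iii) your argument coincides with the paper's proof: both set $g = \phee_\alpha + \frac{1}{2\alpha}\|\cdot\|^2$ (the paper's $\psi$), write $(\phee_\alpha \circ B)(x) + \frac{\|B\|^2}{2\alpha}\|x\|^2 = g(Bx) + \frac{1}{2\alpha}\bigl(\|B\|^2\|x\|^2 - \|Bx\|^2\bigr)$, and conclude from the convexity of $g \circ B$ together with the positive semidefiniteness of $\|B\|^2 \id - B^\top B$. For items (i) and (ii), however, the paper gives no argument at all, citing instead the earlier work \cite{Shen-Suter-Tripp:2019}, whereas you supply self-contained proofs, and both are sound: your completion of the square exhibits $\frac{1}{2\alpha}\|\cdot\|^2 - \env_\alpha \phee$ as a pointwise supremum of functions affine in $z$ (essentially the classical identity relating this difference to the conjugate of $\phee + \frac{1}{2\alpha}\|\cdot\|^2$), which settles (ii) in two lines; and for (i), the chain $0 \in \partial\phee(0) \Rightarrow \prox_{\alpha\phee}(0) = 0 \Rightarrow \env_\alpha\phee(0) = 0$ and $\nabla \env_\alpha \phee(0) = 0$, combined with the exact sum rule for a differentiable-plus-subdifferentiable pair that the paper itself records in Section~\ref{sec:SSPF}, yields both the global minimum of zero at the origin and $\partial\phee_\alpha(0) = \partial\phee(0)$. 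What your route buys is a lemma verifiable without consulting the reference; what the paper's route buys is brevity. One small point to make explicit in (i): convexity of $\phee$ together with Definition~\ref{defn:SPF} puts $\phee$ in $\Gamma_0(\mathbb{R}^d)$, which is what guarantees that $\env_\alpha\phee$ is finite-valued and differentiable everywhere and that $\prox_{\alpha\phee}$ is single-valued --- facts your argument uses throughout.
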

\begin{proof}\ \ The proofs of items (i) and (ii) can be found in \cite{Shen-Suter-Tripp:2019}. We now turn to prove item (iii). Define $\psi=\phee_\alpha + \frac{1}{2\alpha}\| \cdot \|^2$. Then, for any $x \in \mathbb{R}^n$
\begin{equation}\label{temp:1}
\phee_\alpha \circ B (x)+\frac{\|B\|^2}{2\alpha}\|x\|^2 = \psi (Bx) + \frac{\|B\|^2}{2\alpha}\|x\|^2- \frac{1}{2\alpha}\|Bx\|^2.
\end{equation}
By item (ii), $\psi \circ B$  is convex. Note that $\|B\|^2 \|x\|^2-\|Bx\|^2=x^\top (\|B\|^2 \id - B^\top B) x \ge 0$ for all $x \in \mathbb{R}^d$, so it is convex. Hence,  $\phee_\alpha \circ B+\frac{\|B\|^2}{2\alpha}\|\cdot\|^2$ is convex, which implies that item (iii) holds.
\end{proof}

One benefit of Definition \ref{defn:SPF} is that it is sufficiently general to encompass many examples. In practice, we often require more of $\phee$ than convexity and can therefore specify further properties of $\phee_\alpha$. Properties such as separability or block-separability are assumed to control the fineness of sparsity enforcement, and convergence analysis may rely on the function being continuous or subanalytic. In each of these cases, $\phee_\alpha$ inherits the given properties. This is evident in the following examples.

\subsection{Example 1: $\phee$ is the absolute value function}

%Here, we present two examples of particular interest for denoising: the $\ell_1$-norm and the modified $\ell_2$-norm used, for example, in the ROF-TV model.

Relying on the separability of the $\ell_1$-norm, we simply let $\phee$ be the absolute value function, that is,  $\phee(x)=|x|$ on $\mathbb{R}$. Clearly, because $\phee$ achieves its minimum at the origin and $\partial \phee(0)=[-1, 1]$, the absolute value function on $\mathbb{R}$ is a sparsity promoting function. The proximity operator and the Moreau envelope of $|\cdot|$ with parameter $\alpha>0$ are
$$
\mathrm{prox}_{\alpha |\cdot|} (x)=\mathrm{sgn}(x) \max\{0, |x|-\alpha\} \quad \mbox{and} \quad
\mathrm{env}_{\alpha} |\cdot| (x) = \left\{
                                \begin{array}{ll}
                                  \frac{1}{2\alpha}x^2, & \hbox{if $|x|\le \alpha$;} \\
                                  |x|-\frac{1}{2}\alpha, & \hbox{otherwise,}
                                \end{array}
                              \right.
$$
respectively. It is well known that $\mathrm{prox}_{\alpha |\cdot|}$ is called the soft thresholding operator in wavelet literature \cite{Donoho:ieeeIT:06} and $\env_{\alpha} |\cdot|$ is Huber's function in robust statistics \cite{Huber:09}. We note that for $x \in \mathbb{R}^d$, $\prox_{\alpha \|\cdot\|_1}(x) = \prox_{\alpha |\cdot|}(x_1) \times \cdots \times \prox_{\alpha |\cdot|}(x_d)$ and $\env_\alpha \|\cdot\|_1(x) = \sum_{i = 1}^d \env_\alpha |\cdot| (x_i)$.

Figure~\ref{figure:ex1-f-envf}(a) depicts the graphs of $|\cdot|$ (solid line) and its Moreau envelope (dotted line) while  Figure~\ref{figure:ex1-f-envf}(b) shows the graph of its the proximity operator.

\begin{figure}[h]\centering
\begin{tabular}{ccc}
\includegraphics[scale=0.32]{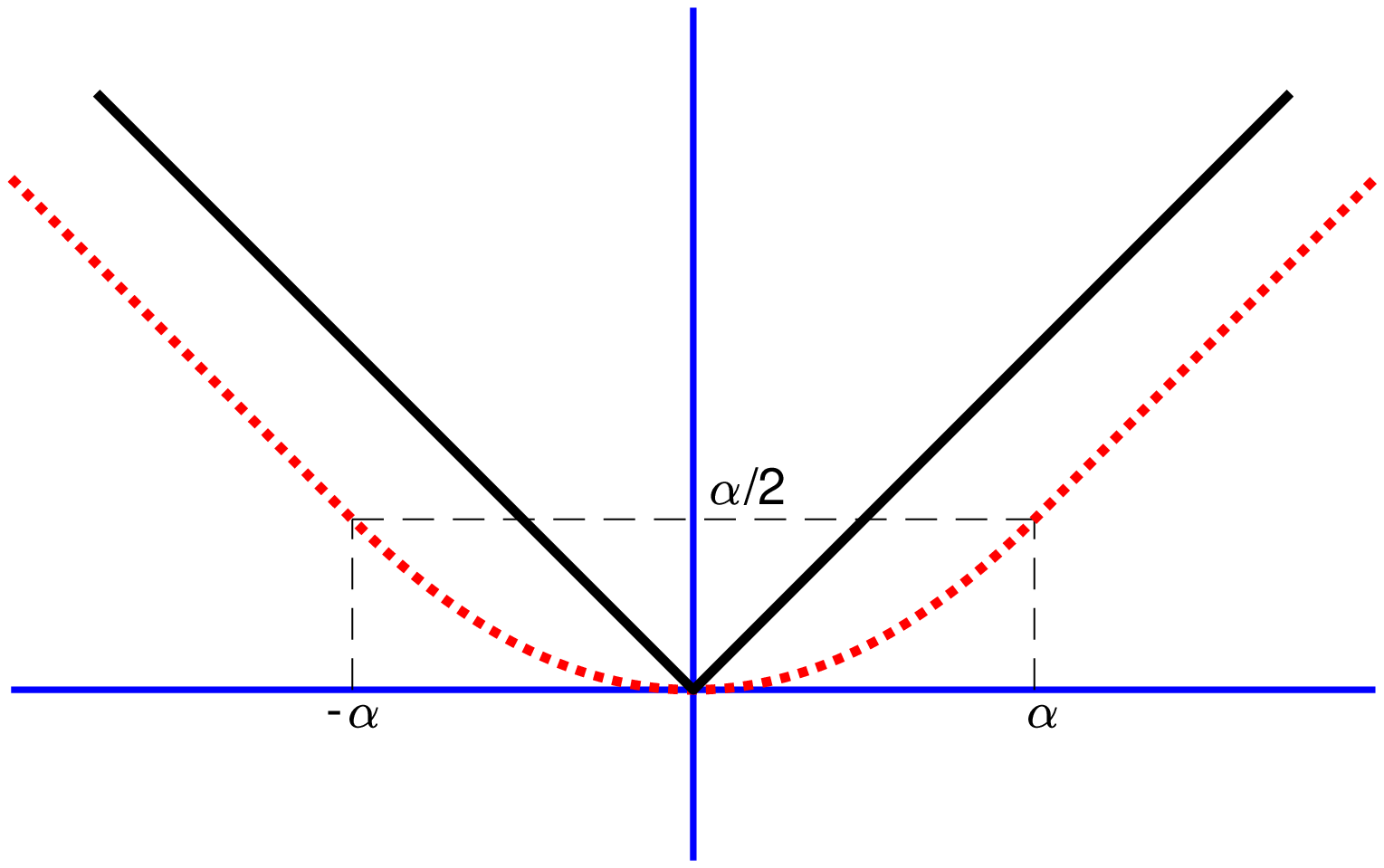}&
\includegraphics[scale=0.32]{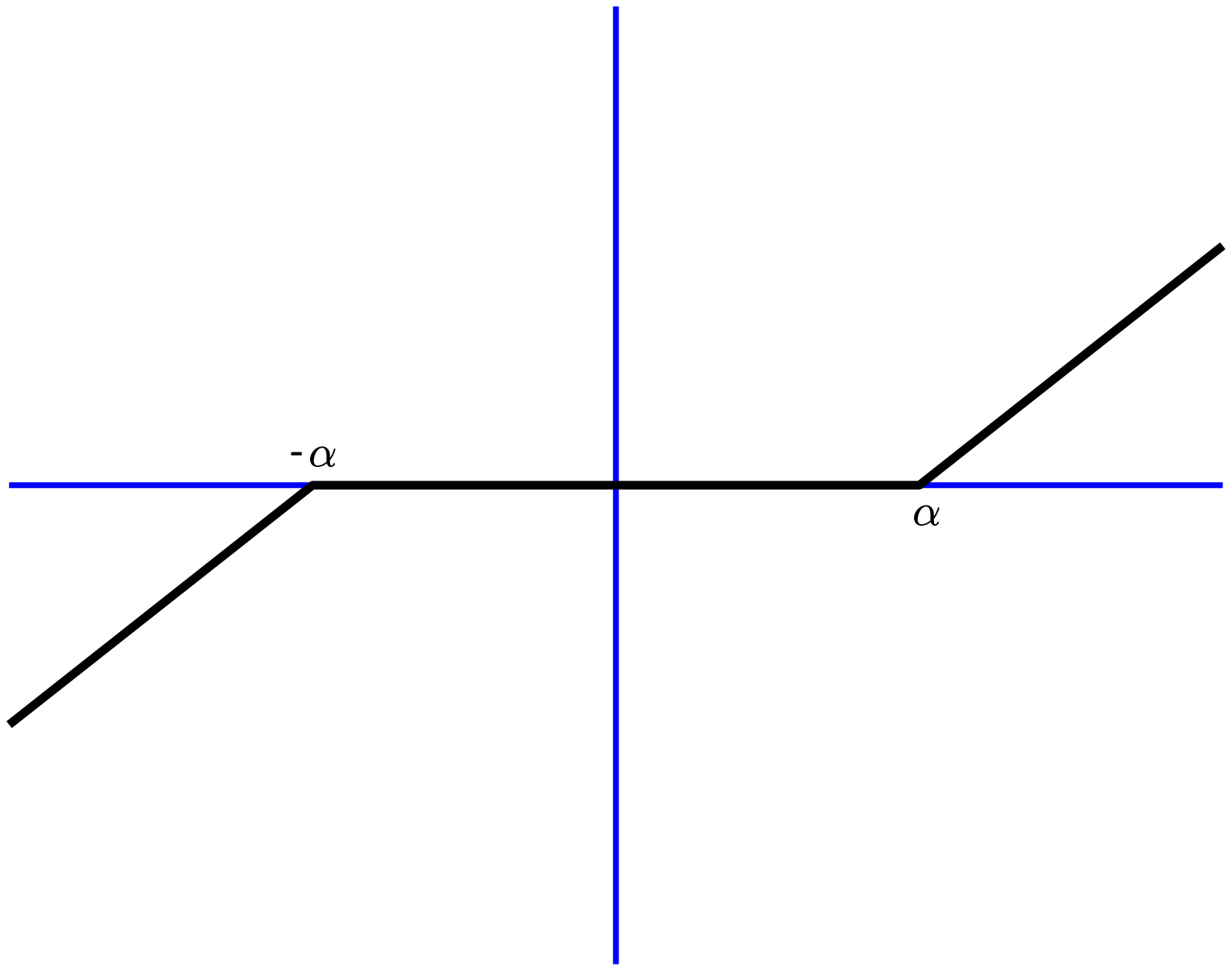}&
\includegraphics[scale=0.32]{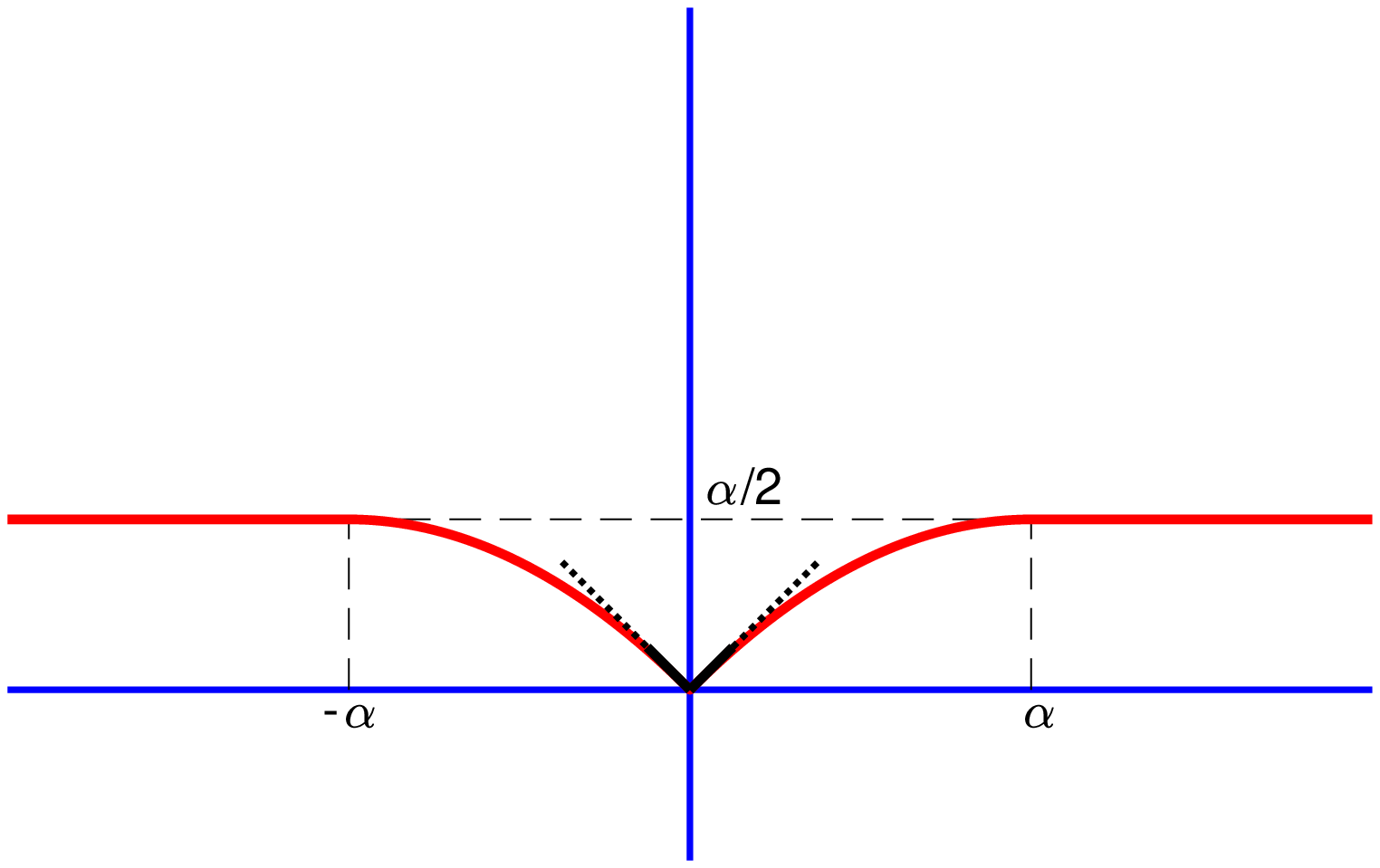}\\
(a) &(b)&(c)
\end{tabular}
\caption{Let $\phee=|\cdot|$ be the absolute value function. (a) The graphs of $\phee$ (solid), $\mathrm{env}_{\alpha} \phee$ (dotted); (b) The typical shape of $\prox_{\alpha \phee}$;  and (c) the graph of $\phee_\alpha = \phee(x) - \mathrm{env}_{\alpha} \phee(x)$. Near the origin $\phee_\alpha$ retains the structure of $f$, which is emphasized in black (solid-dotted).} \label{figure:ex1-f-envf}
\end{figure}

As defined in \eqref{defn:falpha}, for the absolute value function $\phee$,
$$
\phee_\alpha(x):= |x|-\env_\alpha |\cdot| (x)=\left\{
                                \begin{array}{ll}
                                  |x|-\frac{1}{2\alpha}x^2, & \hbox{if $|x|\le \alpha$;} \\
                                  \frac{1}{2}\alpha, & \hbox{otherwise.}
                                \end{array}
                              \right.
$$
This function $\phee_\alpha$ (see Figure~\ref{figure:ex1-f-envf}(c)) is identical to the minimax convex penalty (MCP) function given in \cite{Zhang:AS:2010}, but motivated from statistic perspective.  It is straightforward to extend this to $\mathbb{R}^d$: $(\|\cdot\|_1)_\alpha(x) = \sum_{i = 1}^d \phee_\alpha(x_i)$. The expression of $\mathrm{prox}_{\beta \phee_{\alpha}}$ depends on the relative values of $\alpha$ and $\beta$, and takes the form as follows (see \cite{Shen-Suter-Tripp:2019}):
\begin{equation}\label{eq:beta-alpha}
\mathrm{prox}_{\beta \phee_{\alpha}}(x)=\left\{
                                        \begin{array}{ll}
                                          \frac{\alpha}{\alpha-\beta}(|x|-\beta)\cdot \mathrm{sgn}(x) \cdot \max\{|x|-\beta,0\} \chi_{\{|x|\le \alpha\}}+\{x\}\chi_{\{|x|> \alpha\}}, & \hbox{if $\beta<\alpha$;} \\
                                          \{0\}\chi_{\{|x|<\alpha\}}+\mathrm{sgn}(x) \cdot[0, \alpha]\chi_{\{|x|= \alpha\}}+\{x\} \chi_{\{|x| >\alpha\}}, & \hbox{if $\beta=\alpha$;} \\
                                         \{0\}\chi_{\{|x|<\alpha\}}+\mathrm{sgn}(x) \cdot\{0, \sqrt{\alpha\beta}\}\chi_{\{|x|= \sqrt{\alpha\beta}\}}+\{x\} \chi_{\{|x| >\sqrt{\alpha\beta}\}} , & \hbox{if $\beta>\alpha$.}
                                        \end{array}
                                      \right.
\end{equation}
Here $\chi_S$ has value $1$ at points of the set $S$, and $0$ at points of $\mathbb{R}\setminus S$. The graphs of  $\mathrm{prox}_{\beta \phee_{\alpha}}$ for different values of $\alpha$ and $\beta$ are plotted in Figure~\ref{fig:prox-|x|}.
\begin{figure}[h]
\centering
\begin{tabular}{ccc}
\includegraphics[scale=0.4]{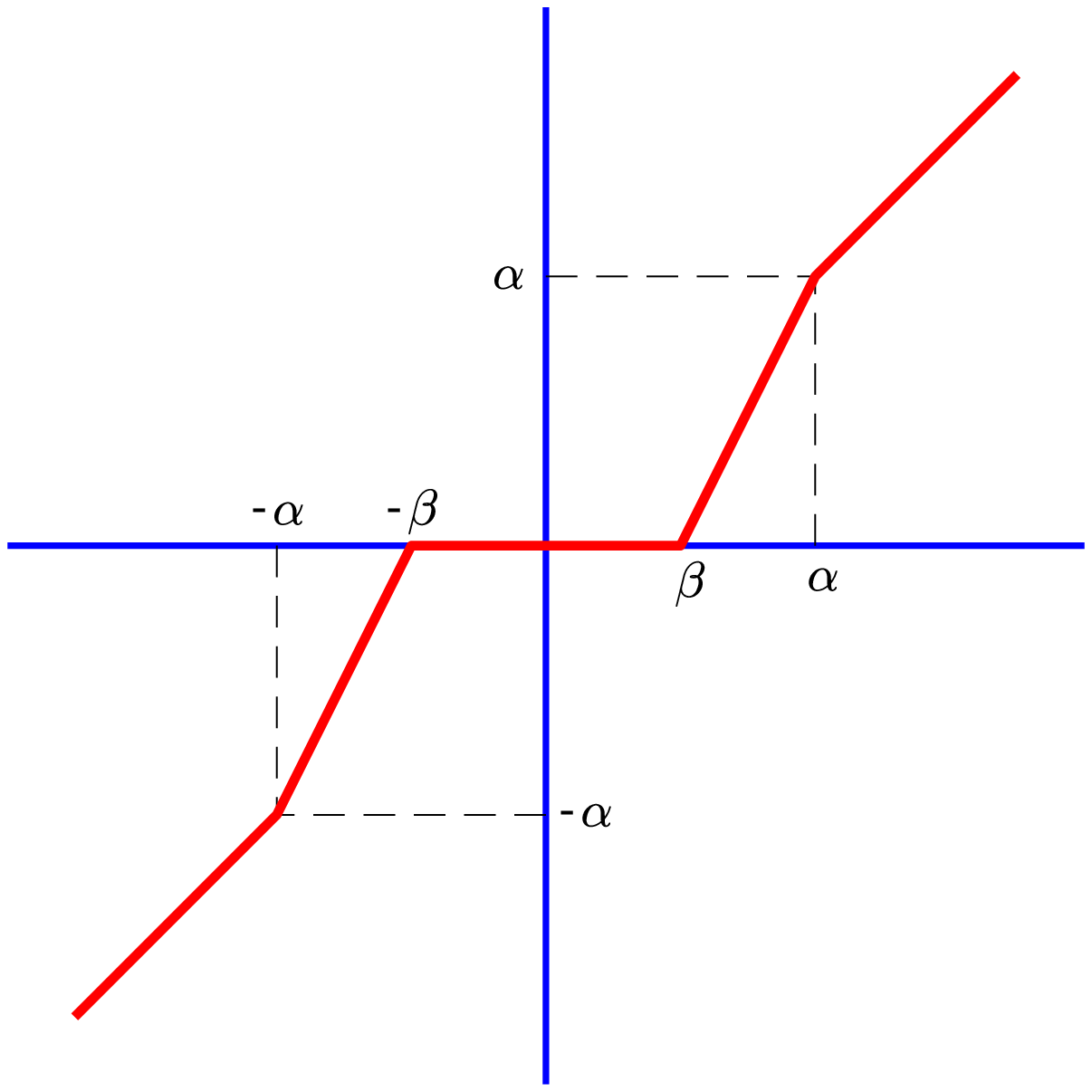} & \includegraphics[scale=0.4]{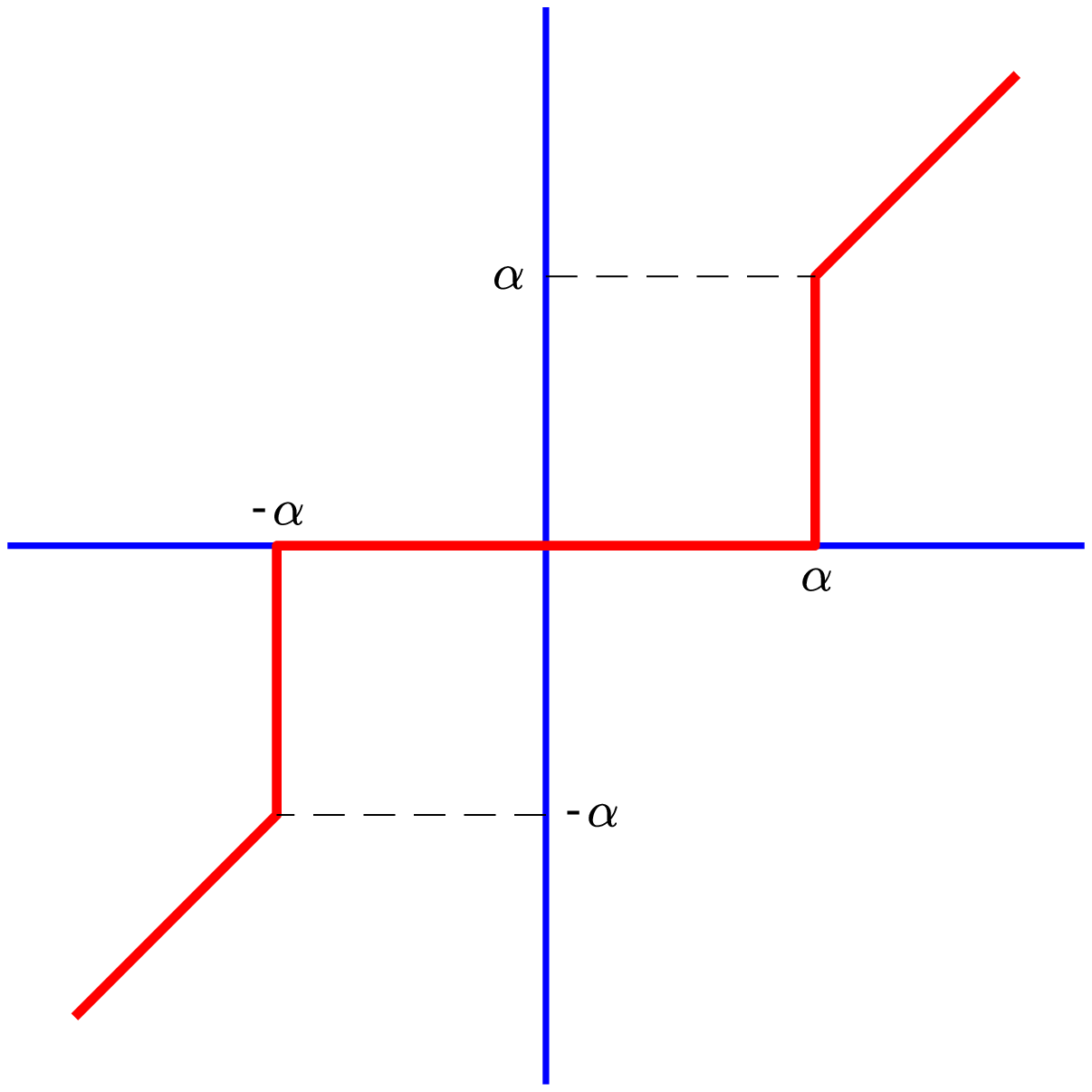} & \includegraphics[scale=0.4]{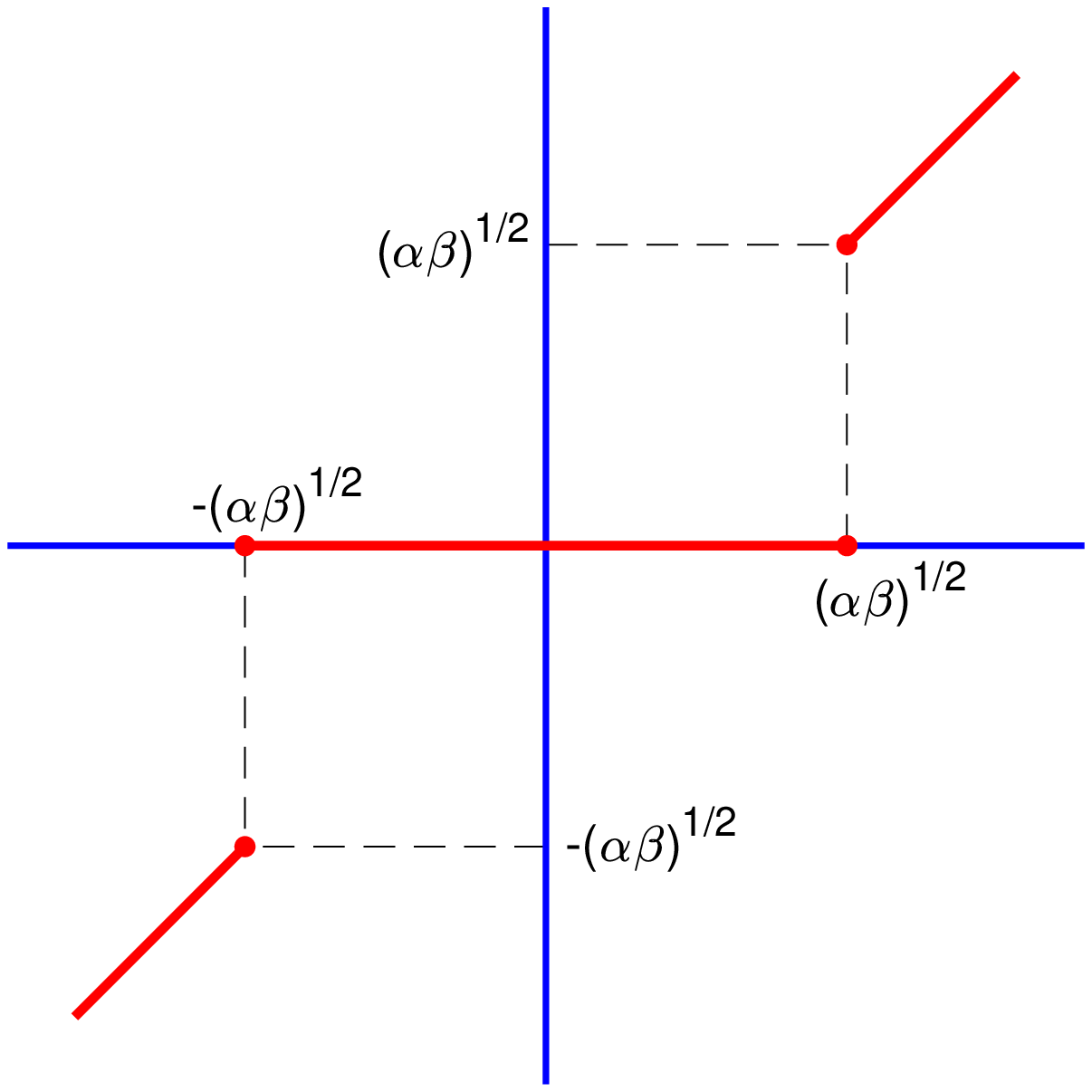}\\
(a) & (b) & (c)
\end{tabular}
\caption{Typical shapes of the proximity operator of $| \cdot |_\alpha$ for (a) $\beta < \alpha$, (b) $\beta = \alpha$, (c) $\beta > \alpha$. The sparsity threshold and the thresholding behavior depend on the relationship between $\alpha$ and $\beta$.}
\label{fig:prox-|x|}
\end{figure}

\subsection{Example 2:  $\phee$ is a compositional norm}

The example here is motivated from the total variation that will be defined in Section~\ref{sec:experiments}.  To define this function, let the disjoint sets $\omega_j$, $j=1,\dots,J$ be the partition of the set $\{1, 2, \ldots, d\}$, that is $\cup_{j=1}^J \omega_j=\{1, 2, \ldots, d\}$; and  let $I_{\omega_j}$ be the $\#{\omega_j} \times d$ matrix formed by those rows of the $d \times d$ identity matrix with indices in ${\omega_j}$. Since $I_{\omega_j}x$ for $x\in \mathbb{R}^d$ is the vector whose entries are from those of $x$ with indices in $\omega_j$, we call $I_{\omega_j}$ extraction matrix.   With these preparation, in the second example, we will consider the following function: for $x\in \mathbb{R}^d$
\begin{equation}\label{def:exampe2}
\phee(x)=\sum_{j=1}^J \|I_{\omega_j} x\|.
\end{equation}
It is not difficult to show that $\phee$ in \eqref{def:exampe2} is a norm of $\mathbb{R}^d$ (associated with the given partition). In \cite{Bradley-Bagnell:2009:CC}, $\phee$ in \eqref{def:exampe2} is referred to as a compositional norm since it is a norm composed of norms over disjoint sets of variables.

For this example, we will show that $\phee$ in \eqref{def:exampe2} is a sparsity promoting function and $\phee_\alpha$ can be presented in terms of $|\cdot|_\alpha$ from example 1. Indeed, the following result says that $\phee$ in \eqref{def:exampe2} is a sparsity promoting function.
\begin{proposition}\label{prop:properties-ex2}
Let $\phee$ in \eqref{def:exampe2} be a compositional norm on $\mathbb{R}^d$, then $\phee \in \SPF(\mathbb{R}^d)$.
\end{proposition}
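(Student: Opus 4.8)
The plan is to verify directly the two defining conditions of Definition~\ref{defn:SPF} for the compositional norm $\phee$. Since $\phee$ is a norm, it is in particular convex and finite everywhere, so $\phee \in \Gamma_0(\mathbb{R}^d) \subseteq \Gamma(\mathbb{R}^d)$. Condition (i) is then immediate from the norm axioms: $\phee(x) \ge 0$ for all $x$, with equality precisely at $x = 0$, so $\phee$ attains its global minimum value of zero at the origin.

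The substance of the argument lies in condition (ii), namely exhibiting a nonzero element of $\partial\phee(0)$. Because $\phee$ is convex, the subdifferential at the origin is $\partial\phee(0) = \{t \in \mathbb{R}^d : \langle t, u\rangle \le \phee(u) \text{ for all } u \in \mathbb{R}^d\}$, which is exactly the unit ball of the dual norm and therefore certainly contains nonzero vectors. Rather than identify the whole dual ball, I would simply produce one convenient element. Fix any block, say $\omega_1$, and any index $k \in \omega_1$, and take $t = e_k$, the $k$-th standard basis vector. For every $u \in \mathbb{R}^d$ one has the chain
\[
\langle e_k, u\rangle = u_k \le |u_k| \le \|I_{\omega_1} u\| \le \sum_{j=1}^J \|I_{\omega_j} u\| = \phee(u),
\]
where the middle inequality uses $k \in \omega_1$, so that $|u_k|$ is bounded by the Euclidean norm of the subvector $I_{\omega_1}u$, and the final inequality drops the remaining nonnegative terms. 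Since $\phee(0)=0$, this says $\langle e_k, u\rangle \le \phee(u) - \phee(0)$ for all $u$, hence $e_k \in \partial\phee(0)$ with $e_k \ne 0$, establishing (ii).

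The only step requiring any care is the coordinate-to-block inequality $|u_k| \le \|I_{\omega_1} u\|$; everything else is definitional, so I do not expect a genuine obstacle here. For completeness, one could instead compute $\partial\phee(0)$ in closed form by applying the sum rule for subdifferentials to $\phee = \sum_{j=1}^J \|I_{\omega_j}\cdot\|$ together with the chain rule $\partial(\|\cdot\| \circ I_{\omega_j})(0) = I_{\omega_j}^\top \{v : \|v\| \le 1\}$, both of which hold with equality because each summand is finite-valued and convex. This would yield the full dual-ball description of $\partial\phee(0)$, but for the present claim exhibiting the single element $e_k$ suffices.
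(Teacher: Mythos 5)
Your proof is correct, but it reaches condition (ii) of Definition~\ref{defn:SPF} by a different route than the paper. The paper's proof applies subdifferential calculus wholesale: it writes $\partial \phee(x) = \sum_{j=1}^J I_{\omega_j}^\top \partial\|\cdot\|(I_{\omega_j}x)$ via the sum and chain rules and then observes that at $x=0$ each term contributes an embedded unit ball of $\mathbb{R}^{\#\omega_j}$, so $\partial\phee(0)$ contains nonzero elements. You instead exhibit a single concrete witness, $e_k$ for $k \in \omega_1$, verified bare-hands through the inequality chain $\langle e_k, u\rangle \le |u_k| \le \|I_{\omega_1}u\| \le \phee(u)$ together with the equality characterization of the convex subdifferential; your closing remark about computing the full subdifferential by sum and chain rules is essentially the paper's argument. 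Your main argument is more elementary and self-contained: it needs no calculus rules at all, and in particular sidesteps the exactness of the sum rule (which does hold here, as you note, since each summand $\|I_{\omega_j}\cdot\|$ is finite-valued and convex, but which the paper invokes without comment). What the paper's approach buys in exchange is the complete description of $\partial\phee(0)$ as the sum of embedded unit balls --- equivalently the dual unit ball $\{t : \|I_{\omega_j}t\| \le 1,\ j=1,\dots,J\}$ --- which is the set that calibrates the sparsity threshold in the sense of Definition~\ref{defn:SPF} and is structurally aligned with the blockwise formulas \eqref{prox-ex2} and \eqref{prox-falpha-ex2} used later; your witness $e_k$ suffices for the proposition as stated but carries none of that extra information.
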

\begin{proof} Clearly, $\phee(0)=0$ and $\phee$ achieves its global minimum at the origin. Further, we have $\partial \phee(x)=\sum_{j=1}^J  I^\top_{\omega_j} \partial \|\cdot\| (I_{\omega_j} x)$. Since $\partial \|\cdot\| (I_{\omega_j} 0)$ is the unit ball of $\mathbb{R}^{\#{\omega_j}}$, we know that $\partial \phee(0)$ contains nonzero elements, so $\phee$ is a SPF.
\end{proof}

To compute $\phee_\alpha$ and its proximity operator,  we need the following lemma, which can be viewed as an extension of the first example.
\begin{lemma}\label{lemma:L1-L2}
Let $f$ be the $\ell_2$-norm on $\mathbb{R}^d$, that is, $f=\|\cdot\|$. Then, it holds that for any nonzero $x \in \mathbb{R}^d$ and two positive parameters $\alpha$ and $\beta$
$$
\mathrm{prox}_{\beta f}(x) = \mathrm{prox}_{\beta |\cdot|}(\|x\|) \frac{x}{\|x\|} \quad \mbox{and} \quad
\mathrm{prox}_{\beta f_\alpha}(x) = \mathrm{prox}_{\beta |\cdot|_\alpha}(\|x\|) \frac{x}{\|x\|}
$$
with the convention $\frac{0}{\|0\|}=0$.
\end{lemma}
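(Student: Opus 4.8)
The plan is to exploit the radial symmetry shared by both $f$ and $f_\alpha$, which reduces each $d$-dimensional proximity problem to the corresponding one-dimensional problem along the ray through $x$. The engine is a single reduction: if $G = g(\|\cdot\|)$ for some even function $g$, then $\prox_{\beta G}(x) = \prox_{\beta g}(\|x\|)\,\frac{x}{\|x\|}$ for every nonzero $x$. I would establish this reduction first and then recover both displayed identities by identifying the relevant $g$.

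First I would prove the reduction. Fix $x \neq 0$ and let $w^\star$ be any minimizer of $w \mapsto g(\|w\|) + \frac{1}{2\beta}\|w - x\|^2$. Setting $\tilde{w} = \|w^\star\|\,\frac{x}{\|x\|}$, the $g$-term is unchanged while, by Cauchy--Schwarz, $\|\tilde{w} - x\|^2 = \|w^\star\|^2 - 2\|w^\star\|\,\|x\| + \|x\|^2 \le \|w^\star\|^2 - 2\langle w^\star, x\rangle + \|x\|^2 = \|w^\star - x\|^2$. Minimality forces equality, hence $\langle w^\star, x\rangle = \|w^\star\|\,\|x\|$, so $w^\star$ is a nonnegative multiple of $x$, i.e. $w^\star = t^\star\,\frac{x}{\|x\|}$ with $t^\star = \|w^\star\| \ge 0$. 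Substituting $w = t\,\frac{x}{\|x\|}$ for $t \in \mathbb{R}$, the objective becomes $g(|t|) + \frac{1}{2\beta}(t - \|x\|)^2 = g(t) + \frac{1}{2\beta}(t - \|x\|)^2$ by evenness of $g$, whose minimizers over $\mathbb{R}$ are exactly $\prox_{\beta g}(\|x\|)$; since $\|x\| \ge 0$ and $g$ is even, these minimizers are nonnegative, so restricting to $t \ge 0$ changes nothing. This delivers the claimed set equality.

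With the reduction in hand, the first identity is immediate, since $f = |\cdot|(\|\cdot\|)$ with the even $g = |\cdot|$. For the second identity I would first show $f_\alpha = |\cdot|_\alpha(\|\cdot\|)$. Applying the first identity with $\beta = \alpha$, the envelope minimizer is $w^\star = \prox_{\alpha|\cdot|}(\|x\|)\,\frac{x}{\|x\|}$ (single-valued, as $f$ is convex); plugging this into the definition of $\env_\alpha f$ and using $\|w^\star\| = \prox_{\alpha|\cdot|}(\|x\|) \ge 0$ together with $\|w^\star - x\| = |\prox_{\alpha|\cdot|}(\|x\|) - \|x\||$ collapses $\env_\alpha f(x)$ to $\env_\alpha|\cdot|(\|x\|)$. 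Hence $f_\alpha(x) = \|x\| - \env_\alpha|\cdot|(\|x\|) = |\cdot|_\alpha(\|x\|)$, using $\|x\| = |\,\|x\|\,|$. Since $|\cdot|_\alpha$ is even, the reduction applies once more with $g = |\cdot|_\alpha$ and yields the second identity.

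The main obstacle I anticipate is the bookkeeping around set-valuedness: because $f_\alpha$ is nonconvex, both $\prox_{\beta f_\alpha}$ and $\prox_{\beta|\cdot|_\alpha}$ are set-valued (cf.\ \eqref{eq:beta-alpha}), so the reduction must be carried out as an equality of \emph{sets} of minimizers rather than of points. The Cauchy--Schwarz step is what makes this clean, since it forces every $d$-dimensional minimizer onto the nonnegative ray through $x$ and thereby sets up a bijection with the one-dimensional minimizers; one must nonetheless verify both directions of this bijection and confirm that evenness of $g$ together with $\|x\| \ge 0$ keeps all one-dimensional minimizers nonnegative, so that no spurious solution on the opposite ray is introduced. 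The degenerate case $x = 0$ is excluded by hypothesis and absorbed by the stated convention $\frac{0}{\|0\|} = 0$.
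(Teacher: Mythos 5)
Your proposal is correct and takes essentially the same approach as the paper: both reduce the $d$-dimensional proximity computation to the one-dimensional problem along the ray through $x$ by exploiting the radial symmetry of $f$ and $f_\alpha$. The only difference is that you make self-contained what the paper delegates to references---the formula for $\prox_{\beta f}$ (cited to Bauschke--Combettes) and the identity $\env_\alpha f(x)=\env_\alpha|\cdot|(\|x\|)$ (cited to Micchelli--Shen--Xu)---and your Cauchy--Schwarz step rigorously justifies, including the nonnegativity of the ray coordinate in the set-valued case, what the paper simply asserts from isotropy.
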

\begin{proof} The derivation of $\mathrm{prox}_{\beta f}$ can be found in \cite{Bauschke-Combettes:11}. A direct computation (for example, see \cite{Micchelli-Shen-Xu:IP-11}) gives $\mathrm{env}_{\alpha} f (x) = \mathrm{env}_{\alpha} |\cdot| (\|x\|)$. Therefore, $f_\alpha(x)=\|x\|-\mathrm{env}_{\alpha} |\cdot| (\|x\|)=|\cdot|_\alpha(\|x\|)$. This function is isotropic, meaning it depends only on the magnitude of its argument and not the direction. Then for any $x \in \mathbb{R}^d$, every element of $\mathrm{prox}_{\beta f_\alpha}(x)$ should be a multiple of $\frac{x}{\|x\|}$. Moreover, we have
\begin{eqnarray*}
\mathrm{prox}_{\beta f_\alpha}(x)&=& \mathrm{arg}\min \left\{f_\alpha(w)+\frac{1}{2\beta} \|w-x\|^2: w\in\mathbb{R}^d\right\}\\
&=& \frac{x}{\|x\|} \cdot \mathrm{arg}\min \left\{|\cdot|_\alpha(\tau)+\frac{1}{2\beta} (\tau-\|x\|)^2: \tau\in\mathbb{R}\right\}\\
&=& \frac{x}{\|x\|} \cdot \mathrm{prox}_{\beta |\cdot|_\alpha}(\|x\|).
\end{eqnarray*}
This completes the proof.
\end{proof}

With this lemma, the expression of $\phee_\alpha$ is given in the following.
\begin{proposition}\label{prop:properties-ex2-falpha}
Let $\phee$ in \eqref{def:exampe2} be a compositional norm  on $\mathbb{R}^d$. For any $x \in \mathbb{R}^d$ and q positive parameter $\alpha$, we have that
\begin{equation}\label{falpha-ex2}
\phee_\alpha(x) = \sum_{j=1}^J |\cdot|_\alpha(\|I_{\omega_j} x\|).
\end{equation}
Furthermore, for any positive parameter $\beta$, we have that
\begin{equation}\label{prox-ex2}
\mathrm{prox}_{\beta \phee}(x) = \sum_{j=1}^J I^\top_{\omega_j}\mathrm{prox}_{\beta |\cdot|}(\|I_{\omega_j}x\|) \frac{I_{\omega_j}x}{\|I_{\omega_j}x\|}.
\end{equation}
and
\begin{equation}\label{prox-falpha-ex2}
\mathrm{prox}_{\beta \phee_\alpha}(x) = \sum_{j=1}^J I^\top_{\omega_j}\mathrm{prox}_{\beta |\cdot|_\alpha}(\|I_{\omega_j}x\|) \frac{I_{\omega_j}x}{\|I_{\omega_j}x\|}.
\end{equation}
\end{proposition}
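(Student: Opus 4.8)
The plan is to exploit the block-separability induced by the partition $\{\omega_j\}_{j=1}^J$ of $\{1,\dots,d\}$ and then reduce everything to the scalar/isotropic computations already packaged in Lemma~\ref{lemma:L1-L2}. First I would record the elementary algebraic properties of the extraction matrices: since the $\omega_j$ are disjoint and exhaust the index set, the rows of distinct $I_{\omega_j}$ have disjoint supports, so $I_{\omega_j}I^\top_{\omega_k}=\delta_{jk}\id$ and $\sum_{j=1}^J I^\top_{\omega_j}I_{\omega_j}=\id$; consequently $\|x\|^2=\sum_{j=1}^J\|I_{\omega_j}x\|^2$ for every $x\in\mathbb{R}^d$. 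Writing $f=\|\cdot\|$ for the Euclidean norm on the relevant coordinate block, this exhibits $\phee(x)=\sum_{j=1}^J f(I_{\omega_j}x)$ as block-separable.

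For the expression \eqref{falpha-ex2}, I would first argue that the Moreau envelope separates over blocks. Because both $\phee$ and the penalty $\frac{1}{2\alpha}\|w-x\|^2$ decompose as independent sums over the disjoint blocks $I_{\omega_j}w$, the minimization defining $\env_\alpha\phee(x)$ splits into $J$ uncoupled subproblems, giving $\env_\alpha\phee(x)=\sum_{j=1}^J\env_\alpha f(I_{\omega_j}x)$. Subtracting this from $\phee$ and invoking the identity $f_\alpha(y)=|\cdot|_\alpha(\|y\|)$ established inside the proof of Lemma~\ref{lemma:L1-L2} then yields $\phee_\alpha(x)=\sum_{j=1}^J f_\alpha(I_{\omega_j}x)=\sum_{j=1}^J|\cdot|_\alpha(\|I_{\omega_j}x\|)$, which is \eqref{falpha-ex2}.

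The two proximity formulas \eqref{prox-ex2} and \eqref{prox-falpha-ex2} follow from the same separability principle. For \eqref{prox-ex2}, block-separability of $\phee$ forces any minimizer $w$ of $\phee(w)+\frac{1}{2\beta}\|w-x\|^2$ to satisfy $I_{\omega_j}w\in\prox_{\beta f}(I_{\omega_j}x)$ independently for each $j$; Lemma~\ref{lemma:L1-L2} evaluates this block as $\prox_{\beta|\cdot|}(\|I_{\omega_j}x\|)\frac{I_{\omega_j}x}{\|I_{\omega_j}x\|}$, and reassembling via $w=\sum_{j=1}^J I^\top_{\omega_j}(I_{\omega_j}w)$ produces the claimed sum. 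For \eqref{prox-falpha-ex2} the argument is identical with $f$ replaced by $f_\alpha$ and $\prox_{\beta|\cdot|}$ replaced by $\prox_{\beta|\cdot|_\alpha}$, using the second identity of Lemma~\ref{lemma:L1-L2}.

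The main point requiring care is the decoupling of the minimization for the nonconvex function $\phee_\alpha$, whose proximity operator is set-valued (as already visible in the scalar formula \eqref{eq:beta-alpha}). Here I would verify that the product structure of the argmin set is genuinely preserved: because the objective is an exact sum of terms each depending only on one block $I_{\omega_j}w$, a point is a global minimizer if and only if each block independently minimizes its own subproblem, so the full prox set equals the product of the block prox sets embedded through the $I^\top_{\omega_j}$. Confirming that this is a genuine set equality (not merely an inclusion) and that the convention $\frac{0}{\|0\|}=0$ is respected at blocks where $I_{\omega_j}x=0$ are the only real subtleties; everything else is a direct consequence of Lemma~\ref{lemma:L1-L2} together with the orthogonality and completeness of the extraction matrices.
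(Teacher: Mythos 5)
Your proof follows essentially the same route as the paper's: both exploit the block-separability of $\phee$ over the partition $\{\omega_j\}$ to split the Moreau envelope and the prox minimizations into $J$ uncoupled subproblems, and then invoke Lemma~\ref{lemma:L1-L2} to evaluate each block. The only difference is that you make explicit two points the paper leaves implicit---the orthogonality identities $I_{\omega_j}I^\top_{\omega_k}=\delta_{jk}\id$, $\sum_j I^\top_{\omega_j}I_{\omega_j}=\id$ underlying the decomposition, and the verification that for the nonconvex, set-valued $\prox_{\beta\phee_\alpha}$ the argmin set is genuinely the product of the block argmin sets---which is a welcome tightening rather than a departure.
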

\begin{proof}\ \ We omit the proof of equation~\eqref{prox-ex2} here since its proof is similar to that of equation~\eqref{prox-falpha-ex2}. Because of the block structure of $\phee$ given in \eqref{def:exampe2}, and using the definition of Moreau envelope, we have that
\begin{equation*}
\mathrm{env}_{\alpha}\phee(x)= \sum_{j=1}^J \min\left\{\frac{1}{2\alpha}\|v-I_{\omega_i}x\|^2+\|v\|: v \in \mathbb{R}^{\#\omega_i}\right\} = \sum_{j=1}^J \mathrm{env}_{\alpha}|\cdot|(\|I_{\omega_i}x\|).
\end{equation*}
From the above equation, we have $$\phee_\alpha(x)=\sum_{j=1}^J (\|I_{\omega_i}x\|-\mathrm{env}_{\alpha}|\cdot|(\|I_{\omega_i}x\|))=\sum_{j=1}^J |\cdot|_\alpha(\|I_{\omega_j} x\|),$$
which is \eqref{falpha-ex2}.

Next, we compute the proximity operator of $\phee_\alpha$. We have that
\begin{eqnarray*}
\mathrm{prox}_{\beta \phee_\alpha}(x)&=& \mathrm{arg}\min \left\{\sum_{j=1}^J \left(|\cdot|_\alpha(\|I_{\omega_j} w\|)+\frac{1}{2\beta} \|I_{\omega_j}w-I_{\omega_j}x\|^2\right): w\in\mathbb{R}^d\right\}\\
&=&\sum_{j=1}^J I^\top_{\omega_j} \mathrm{arg}\min \left\{\left(|\cdot|_\alpha(\|u\|)+\frac{1}{2\beta} \|u-I_{\omega_j}x\|^2\right): u\in\mathbb{R}^{\#\omega_j}\right\},
\end{eqnarray*}
which is \eqref{prox-falpha-ex2} by Lemma~\ref{lemma:L1-L2}.
\end{proof}

%%%%%%%%%%%%%%%%%%%%%%%%%%%%%%%%%%%%%%
%%%%%%%SECTION: PROBLEM%%%%%%%%%%%%%%
%%%%%%%%%%%%%%%%%%%%%%%%%%%%%%%%%%%%%

\section{Problem Formulation and Algorithms}\label{sec:Algorithms}

We are now able to state the model under consideration in this paper and to provide some insight into its benefits. We are interested in solving
\begin{equation}\label{eq:Primal}\tag{$\mathcal{P}$}
\argmin \left\{ ~ \frac{1}{2\lambda}\|x-z\|^2 + \phee_\alpha(Bx) ~: x \in C \right\},
\end{equation}
where $C \subset \mathbb{R}^d$ is closed and convex, $z \in \mathbb{R}^d$, $B \in \mathbb{R}^{n \times d}$, and $\phee_\alpha$ is a SPF as defined by \eqref{defn:falpha}.

While in some instances $\phee_\alpha$ may be convex (e.g. if $\phee$ is sufficiently strongly convex), without further information, we regard $\phee_\alpha$ as nonconvex. However, depending on the parameters $\alpha$ and $\lambda$ as well as the choice of matrix $B$, we see that  \eqref{eq:Primal} may be convex.
\begin{lemma}\label{lem:Convexity}
For any convex function $\varphi$ on $\mathbb{R}^n$, an $n \times d$ matrix $B$, positive parameters $\lambda$ and $\alpha$, and any fixed
$z \in \mathbb{R}^n$, define
$$
W(x):=\frac{1}{2\lambda}\|x-z\|^2+\varphi_\alpha(Bx).
$$
If $\lambda < \frac{\alpha}{\|B\|^2}$, then $W$ is strictly convex on $\mathbb{R}^n$. If $\lambda = \frac{\alpha}{\|B\|^2}$, then $W$ is convex.
\end{lemma}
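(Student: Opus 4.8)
The plan is to split $W$ into a piece that is convex for free and a residual quadratic whose curvature is controlled entirely by the ratio between $\lambda$ and $\alpha/\|B\|^2$; the decomposition is chosen precisely so that Lemma~\ref{lemma:properties}(iii) applies to the first piece.

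First I would complete the square against the semiconvexity modulus, writing
$$
W(x) = \left(\varphi_\alpha(Bx) + \frac{\|B\|^2}{2\alpha}\|x\|^2\right) + \left(\frac{1}{2\lambda}\|x-z\|^2 - \frac{\|B\|^2}{2\alpha}\|x\|^2\right).
$$
By Lemma~\ref{lemma:properties}(iii), $\varphi_\alpha \circ B$ is $\frac{\|B\|^2}{\alpha}$-semiconvex, which is exactly the statement that the first bracket is convex, so no work is needed there.

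Next I would analyze the second bracket, call it $g$. Expanding the two squared norms shows that $g$ is a quadratic with Hessian equal to the constant matrix $\left(\frac{1}{\lambda} - \frac{\|B\|^2}{\alpha}\right)I$ (the cross term $-\frac{1}{\lambda}\langle x, z\rangle$ and the constant $\frac{1}{2\lambda}\|z\|^2$ contribute nothing to the curvature). Thus the sign of the single scalar $\frac{1}{\lambda} - \frac{\|B\|^2}{\alpha}$ decides everything: when $\lambda < \alpha/\|B\|^2$ it is strictly positive and $g$ is strongly, hence strictly, convex; when $\lambda = \alpha/\|B\|^2$ it vanishes and $g$ degenerates to an affine function, which is convex.

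Finally I would assemble the two cases. A convex function plus a strictly convex function is strictly convex, which yields strict convexity of $W$ when $\lambda < \alpha/\|B\|^2$; a convex function plus an affine function is convex, which yields convexity of $W$ when $\lambda = \alpha/\|B\|^2$. The entire argument rests on choosing the decomposition so that the semiconvexity estimate of Lemma~\ref{lemma:properties}(iii) cancels exactly the worst-case negative curvature of $\varphi_\alpha \circ B$; once that is in place the only thing to watch is the bookkeeping of the quadratic coefficient, so I do not expect a genuine obstacle.
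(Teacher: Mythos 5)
Your proposal is correct and follows essentially the same route as the paper: the paper's entire proof is the one-line remark that the lemma is a direct consequence of the semiconvexity statement in Lemma~\ref{lemma:properties} (its citation of ``item (iv)'' is a typo, since that lemma has only three items and item (iii) is the one used), and your decomposition of $W$ into the convex bracket $\varphi_\alpha\circ B + \frac{\|B\|^2}{2\alpha}\|\cdot\|^2$ plus the residual quadratic with Hessian $\left(\frac{1}{\lambda}-\frac{\|B\|^2}{\alpha}\right)\id$ is exactly the argument the paper leaves implicit. The case analysis on the sign of $\frac{1}{\lambda}-\frac{\|B\|^2}{\alpha}$, including the degeneration to an affine function in the equality case, is carried out correctly.
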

\begin{proof}
This is a direct consequence of item (iv) in Lemma~\ref{lemma:properties}.
\end{proof}

Clearly, the above lemma tells us that for $\lambda \leq \frac{\alpha}{\|B\|^2}$, any critical point of \eqref{eq:Primal} is a global minimum, and for $\lambda < \frac{\alpha}{\|B\|^2}$, the minimizer is unique.

The structure of $\phee_\alpha$ lends flexibility to this model; \eqref{eq:Primal} can be made to fit a variety of generic models, both convex and nonconvex, by grouping terms in different ways.  For example, the objective function of the model can be viewed as a difference of convex functions $\frac{1}{2\lambda} \|x-z\|^2 + \phee(Bx)$  and $\env_\alpha \phee(Bx)$,  and therefore suitable for the rich framework of DC algorithms \cite{An-Tao:AOR:2005}.

Based on the structure of $\phee_\alpha$, we can decompose  model \eqref{eq:Primal} in three ways which correspond to different classes of algorithms: convex, difference of convex, and nonconvex. More precisely, we study three algorithms which highlight each of these cases: primal-dual splitting (PD), difference of convex (DC), and primal-dual hybrid  gradient (PDHG) method.

%In each case, we may also consider the corresponding algorithms for \ref{eq:ConvexPrimal} found by replacing $\env_\alpha \phee$ with zero.

%%%%%%%%%%%Primal-Dual Splitting%%%%%%%%%

\subsection{Primal Dual Splitting}
By identifying
\begin{equation}\label{eq:FGH}
    F(x) = \frac{1}{2\lambda} \|x-z\|_2^2 - \env_\alpha \phee (Bx), \quad G(x) = \iota_C(x), \quad H(x) = \phee(x),
\end{equation}
model \eqref{eq:Primal} can be viewed as a special case of the following generic model
\begin{equation}\label{eq:PrimalDual_Generic_Model}
\mathrm{arg}\min \left\{F(x) + G(x) + H(Bx): x \in \mathbb{R}^d\right\}.
\end{equation}
Under the assumptions that (i) $F$ is convex and differentiable with $L$-Lipschitz gradient and (ii) $G$ and $H$ are proper, convex, lower semicontinuous, and prox-friendly, a primal-dual splitting algorithm, proposed in \cite{Condat:JOTA:2013} for \eqref{eq:PrimalDual_Generic_Model}, is as follows: given initial points $(\x{0}, \y{0})$ and positive parameters $\sigma, \tau, \rho$, iterate
\begin{align}
\xx{k+1} &\coloneqq \prox_{\tau G}(\x{k} - \tau \nabla F(\x{k}) - \tau B^\top \y{k})\label{alg:PD1} \\
\yy{k+1} & \coloneqq \prox_{\sigma H^*}(\y{k} + \sigma B(2\xx{k+1} - \x{k})) \label{alg:PD2}\\
\begin{bmatrix}\x{k+1}\\ \y{k+1}\end{bmatrix} & \coloneqq \rho\begin{bmatrix} \xx{k+1} \\ \yy{k+1} \end{bmatrix} + (1-\rho)\begin{bmatrix}\x{k} \\ \y{k}\end{bmatrix}.\label{alg:PD3}
\end{align}
In the above scheme, $H^*$ is the Fenchel conjugate of $H$. The convergence analysis of the above iterative scheme given in \cite{Condat:JOTA:2013} is stated in the following result.
\begin{proposition}[Condat \cite{Condat:JOTA:2013}]\label{prop:condat}
Let $\tau$, $\sigma$, and $\rho$ be the parameters in \eqref{alg:PD1}--\eqref{alg:PD3}. Suppose that the functions $F$, $G$, and $H$ in \eqref{eq:PrimalDual_Generic_Model} are convex, the gradient of $F$ is $L$-Lipschitz with $L>0$, and the following hold: (i) $\frac{1}{\tau}-\sigma \|B\|^2 > \frac{L}{2}$; and (ii) $\rho \in (0, 1]$. Then the sequence $(\x{k})_{k\in \mathbb{N}}$ converges to a solution of the problem~\eqref{eq:PrimalDual_Generic_Model}.
\end{proposition}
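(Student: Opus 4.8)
The plan is to recognize the iteration \eqref{alg:PD1}--\eqref{alg:PD3} as a relaxed forward--backward splitting applied to a monotone inclusion in a preconditioned metric, and then to invoke the standard convergence theory for averaged operators. (Since this is precisely the statement proved in \cite{Condat:JOTA:2013}, one may cite it directly; what follows sketches the underlying argument.) First I would reformulate \eqref{eq:PrimalDual_Generic_Model} as a saddle-point problem via $H(Bx) = \sup_y \{\langle Bx, y\rangle - H^*(y)\}$. A primal solution $x^\star$ together with a dual certificate $y^\star$ then satisfies the first-order (KKT) system
$$0 \in \nabla F(x^\star) + \partial G(x^\star) + B^\top y^\star, \qquad 0 \in -Bx^\star + \partial H^*(y^\star).$$
Writing $u = (x,y)$, this is the inclusion $0 \in Cu + Au$, where $Cu = (\nabla F(x), 0)$ and $A$ is the maximally monotone operator assembling $\partial G$, $\partial H^*$, and the skew-symmetric coupling $(x,y) \mapsto (B^\top y, -Bx)$. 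Because $F$ is convex with $L$-Lipschitz gradient, the Baillon--Haddad theorem gives that $\nabla F$, and hence $C$, is $\tfrac{1}{L}$-cocoercive.

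Next I would introduce the symmetric matrix
$$P = \begin{bmatrix} \tfrac{1}{\tau}\,\id & -B^\top \\ -B & \tfrac{1}{\sigma}\,\id \end{bmatrix},$$
and check, via a Schur complement, that hypothesis (i) forces $\tfrac{1}{\tau\sigma} > \|B\|^2$, so that $P$ is positive definite and $\langle\cdot,\cdot\rangle_P$ is a genuine inner product. The central step is to unwind the prox optimality conditions of \eqref{alg:PD1}--\eqref{alg:PD2} and verify the identity $\tilde u^{(k+1)} = (P+A)^{-1}(P-C)\,u^{(k)} = J_{P^{-1}A}(\id - P^{-1}C)\,u^{(k)}$, i.e.\ one forward--backward step in the $P$-metric, where $\tilde u^{(k+1)} = (\xx{k+1}, \yy{k+1})$. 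Concretely, the $\xx{k+1}$-optimality reads $\tfrac{1}{\tau}(\x{k}-\xx{k+1}) - \nabla F(\x{k}) - B^\top \y{k} \in \partial G(\xx{k+1})$, and after adding and subtracting $B^\top\yy{k+1}$ this matches exactly the first block of $P(u^{(k)}-\tilde u^{(k+1)}) - Cu^{(k)} \in A\tilde u^{(k+1)}$; the extrapolation $2\xx{k+1}-\x{k}$ in \eqref{alg:PD2} is precisely what reproduces the off-diagonal blocks $-B, -B^\top$ and matches the second block.

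I would then conclude by applying the Krasnosel'ski\u{\i}--Mann theorem. In the Hilbert space equipped with $\langle\cdot,\cdot\rangle_P$ the forward--backward operator $J_{P^{-1}A}\circ(\id - P^{-1}C)$ is averaged precisely when the step size is compatible with the cocoercivity constant, and the strict inequality $\tfrac{1}{\tau} - \sigma\|B\|^2 > \tfrac{L}{2}$ is exactly the condition placing the averagedness constant in $(0,1)$; the relaxation by $\rho \in (0,1]$ in \eqref{alg:PD3} preserves averagedness, and the operator's fixed-point set coincides with the KKT solutions above (so it is nonempty). The resulting Fej\'er-monotone iterates then converge, weakly in general and hence strongly in the finite-dimensional setting here, to some $u^\star = (x^\star, y^\star)$ whose first component solves \eqref{eq:PrimalDual_Generic_Model}. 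The hard part will be the algebraic identification in the second paragraph: showing that the specific Gauss--Seidel ordering of the two prox evaluations, together with the particular extrapolation $2\xx{k+1}-\x{k}$, reproduces the resolvent $(P+A)^{-1}(P-C)$ with the prescribed symmetric off-diagonal blocks, while simultaneously confirming that hypothesis (i) renders $P$ positive definite. Once the scheme is recast as a forward--backward step in the $P$-metric, convergence is packaged machinery.
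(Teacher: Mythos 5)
The paper gives no proof of this proposition---it is quoted directly from Condat \cite{Condat:JOTA:2013}---and your sketch correctly reproduces the argument of that source: recasting \eqref{alg:PD1}--\eqref{alg:PD3} as a relaxed forward--backward step for the monotone inclusion in the metric of $P=\left[\begin{smallmatrix}\tau^{-1}\id & -B^\top\\ -B & \sigma^{-1}\id\end{smallmatrix}\right]$, with hypothesis (i) guaranteeing both $P\succ 0$ and an averagedness constant admitting $\rho\in(0,1]$, then invoking Krasnosel'ski\u{\i}--Mann. The only tacit point, present in Condat's theorem as well, is the standing assumption that a primal--dual solution (zero of the inclusion) exists, so that the fixed-point set you identify is indeed nonempty.
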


We now verify the assumptions of Proposition \ref{prop:condat} through the identifications~\eqref{eq:FGH}.

\begin{proposition}\label{prop:Lips}
Let $F$ be defined as in \eqref{eq:FGH}. Then the following statements hold:
\begin{enumerate}
\item $F$ is differentiable. Moreover, its gradient is $L$-Lipschitz continuous with
$$ L=\left\{
       \begin{array}{ll}
         \frac{1}{\lambda}, & \hbox{if $\|B\|^2 \le \frac{2\alpha}{\lambda}$;} \\
         \sqrt{\frac{1}{\lambda^2}+\frac{\|B\|^2}{\alpha^2} (\|B\|^2-\frac{2\alpha}{\lambda})}, & \hbox{otherwise.}
       \end{array}
     \right.
$$
\item $F$ is strictly convex on $\mathbb{R}^n$ if $\lambda < \frac{\alpha}{\|B\|^2}$; convex if $\lambda = \frac{\alpha}{\|B\|^2}$.
\end{enumerate}
\end{proposition}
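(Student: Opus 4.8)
The plan is to reduce both claims to two standard properties of the Moreau envelope of the convex function $\phee$. First, $\env_\alpha \phee$ is convex and $C^1$ with $\nabla(\env_\alpha \phee) = \frac{1}{\alpha}(\id - \prox_{\alpha\phee})$; second, the function $m := \frac{1}{2\alpha}\|\cdot\|^2 - \env_\alpha \phee$ is itself convex, since $\nabla m = \frac{1}{\alpha}\prox_{\alpha\phee}$ and $\prox_{\alpha\phee}$ is firmly nonexpansive, hence monotone. With these in hand, differentiability of $F$ is immediate: the quadratic term is smooth and $\env_\alpha \phee \circ B$ is the composition of a $C^1$ function with a linear map, so $F \in C^1$ with $\nabla F(x) = \frac{1}{\lambda}(x-z) - B^\top \nabla(\env_\alpha \phee)(Bx)$.

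For the convexity statement (item 2) I would substitute $\env_\alpha \phee = \frac{1}{2\alpha}\|\cdot\|^2 - m$ to rewrite
\[
F(x) = \Bigl(\tfrac{1}{2\lambda}\|x-z\|^2 - \tfrac{1}{2\alpha}\|Bx\|^2\Bigr) + m(Bx).
\]
The second summand is convex because $m$ is convex and $B$ is linear, so the curvature of $F$ is controlled entirely by the bracketed quadratic, whose Hessian is $\frac{1}{\lambda}\id - \frac{1}{\alpha}B^\top B$ (the terms in $z$ contribute only linearly and are irrelevant to convexity). This matrix is positive definite exactly when $\frac{1}{\lambda} > \frac{\|B\|^2}{\alpha}$ and positive semidefinite when $\frac{1}{\lambda} = \frac{\|B\|^2}{\alpha}$, giving strict convexity of $F$ for $\lambda < \alpha/\|B\|^2$ and convexity for $\lambda = \alpha/\|B\|^2$.

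For the Lipschitz constant (item 1) I would invoke the characterization that, for a $C^{1,1}$ function, $\nabla F$ is $L$-Lipschitz if and only if both $\frac{L}{2}\|\cdot\|^2 - F$ and $\frac{L}{2}\|\cdot\|^2 + F$ are convex, and then feed in the same two facts. The difference $\frac{L}{2}\|\cdot\|^2 - F$ equals a quadratic with Hessian $(L - \frac{1}{\lambda})\id$ plus the convex term $\env_\alpha \phee \circ B$, hence convex precisely when $L \ge \frac{1}{\lambda}$; and, substituting $m$ once more, $\frac{L}{2}\|\cdot\|^2 + F$ equals a quadratic with Hessian $(L + \frac{1}{\lambda})\id - \frac{1}{\alpha}B^\top B$ plus the convex term $m \circ B$, hence convex precisely when $L + \frac{1}{\lambda} \ge \frac{\|B\|^2}{\alpha}$. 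The smallest $L$ meeting both requirements is $L = \max\{\frac{1}{\lambda}, \frac{\|B\|^2}{\alpha} - \frac{1}{\lambda}\}$. The last step is the purely algebraic check that this maximum coincides with the two-branch formula in the statement: the branch $\frac{1}{\lambda}$ is active exactly when $\|B\|^2 \le \frac{2\alpha}{\lambda}$, and in the complementary case squaring $\frac{\|B\|^2}{\alpha} - \frac{1}{\lambda}$ reproduces $\frac{1}{\lambda^2} + \frac{\|B\|^2}{\alpha^2}(\|B\|^2 - \frac{2\alpha}{\lambda})$.

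The main obstacle is not the algebra but justifying the \emph{exact} (rather than merely upper-bound) Lipschitz constant. A crude triangle-inequality estimate using $\nabla(\env_\alpha\phee)$ being $\frac{1}{\alpha}$-Lipschitz gives only $L \le \frac{1}{\lambda} + \frac{\|B\|^2}{\alpha}$, so the sharp value requires exploiting the cancellation between the quadratic and the envelope terms; the two-sided convexity characterization is exactly what captures this. That characterization relies on $\nabla F$ having a symmetric (generalized) Jacobian, which I would justify from the $C^{1,1}$ regularity of $F$ and almost-everywhere second differentiability, reducing the Lipschitz claim to the spectral estimate $-L\,\id \preceq \nabla^2 F \preceq L\,\id$. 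Establishing the convexity of $m$ cleanly, equivalently the upper curvature bound $\nabla^2(\env_\alpha\phee) \preceq \frac{1}{\alpha}\id$, is the other point needing care, since $\phee$ is only assumed to be a convex SPF and need not be smooth.
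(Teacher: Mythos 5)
Your proof is correct, and for item 1 it takes a genuinely different route from the paper's. For item 2 you essentially match the paper: it too isolates the quadratic $\frac{1}{2\lambda}\|x-z\|^2-\frac{1}{2\alpha}\|Bx\|^2$ and argues that the remainder is convex, only it justifies this by writing the remainder as $\frac{1}{2\alpha}\max\{2\langle B^\top u,x\rangle-\|u\|^2-2\alpha\phee(u): u\in\mathbb{R}^n\}$, a supremum of affine functions of $x$, where you invoke convexity of $m=\frac{1}{2\alpha}\|\cdot\|^2-\env_\alpha\phee$ via monotonicity of $\nabla m=\frac{1}{\alpha}\prox_{\alpha\phee}$ --- two justifications of the same fact (indeed $\alpha m=(\frac{1}{2}\|\cdot\|^2+\alpha\phee)^*$ up to an affine term). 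For item 1 the paper instead computes directly: with $\nabla F=\frac{1}{\lambda}(\cdot-z)-B^\top\prox_{\alpha^{-1}\phee^*}(\alpha^{-1}B\cdot)$ and $p,q$ the two prox values, it expands $\|\nabla F(x)-\nabla F(y)\|^2$, applies firm nonexpansiveness in the form $\langle\alpha^{-1}B(x-y),p-q\rangle\ge\|p-q\|^2$ to reach $\frac{1}{\lambda^2}\|x-y\|^2+(p-q)^\top(BB^\top-\frac{2\alpha}{\lambda}\id)(p-q)$, and splits into the two cases, using $\|p-q\|\le\alpha^{-1}\|B\|\,\|x-y\|$ in the second. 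Your two-sided convexity route yields $L=\max\{\frac{1}{\lambda},\frac{\|B\|^2}{\alpha}-\frac{1}{\lambda}\}$, and your final algebraic check is right: the paper's radicand is the perfect square $(\frac{\|B\|^2}{\alpha}-\frac{1}{\lambda})^2$, so the two constants agree exactly in both branches --- a structural fact the paper's formula obscures. What each buys: the paper's estimate is elementary and self-contained, needing only firm nonexpansiveness; yours reuses the item-2 decomposition and exposes the max structure, but rests on the equivalence between $L$-Lipschitzness of $\nabla F$ and convexity of $\frac{L}{2}\|\cdot\|^2\pm F$. Your a.e.-second-differentiability justification of that equivalence is workable here (the crude bound $\frac{1}{\lambda}+\frac{\|B\|^2}{\alpha}$ already makes $F$ a $C^{1,1}$ function, so Rademacher applies to $\nabla F$), but you can avoid the measure theory entirely: set $g=F+\frac{L}{2}\|\cdot\|^2$; your two hypotheses say $g$ and $L\|\cdot\|^2-g$ are convex, so by the Baillon--Haddad theorem $\nabla g$ is $\frac{1}{2L}$-cocoercive, and then $\|\nabla F(x)-\nabla F(y)\|^2=\|\nabla g(x)-\nabla g(y)\|^2-2L\langle\nabla g(x)-\nabla g(y),x-y\rangle+L^2\|x-y\|^2\le L^2\|x-y\|^2$. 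One small caveat: your ``precisely when'' claims are really only sufficiency (extra curvature in $\env_\alpha\phee\circ B$ or $m\circ B$ could permit a smaller $L$), but since the proposition asserts only that the stated $L$ works, this costs nothing.
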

\begin{proof}
(i): We know that Moreau envelope of a convex function is differentiable. Hence, $F$ is differentiable and is simply the difference of two differentiable functions. Actually, we have that
$$
\nabla F= \frac{1}{\lambda}(\cdot-z)-B^\top \mathrm{prox}_{\alpha^{-1} \varphi^*} (\alpha^{-1} B\cdot).
$$
For any $x$ and $y$ in $\mathbb{R}^n$, let us denote  $p=\mathrm{prox}_{\alpha^{-1} \varphi^*} (\alpha^{-1} Bx)$ and $q=\mathrm{prox}_{\alpha^{-1} \varphi^*} (\alpha^{-1} By)$. Then, one has
\begin{eqnarray*}
\|\nabla F(x)-\nabla F(y)\|^2&=&
\frac{1}{\lambda^2}\|x-y\|^2-\frac{2\alpha}{\lambda}\langle \alpha^{-1}B(x-y),  p-q\rangle + \|B^\top(p-q)\|^2\\
&\le& \frac{1}{\lambda^2}\|x-y\|^2-\frac{2\alpha}{\lambda}\|p-q\|^2 + \|B^\top(p-q)\|^2\\
&=&\frac{1}{\lambda^2}\|x-y\|^2 + (p-q)^\top (BB^\top-\frac{2\alpha}{\lambda}\id)(p-q).
\end{eqnarray*}
Obviously, if $\|B\|^2 \le \frac{2\alpha}{\lambda}$, then $BB^\top-\frac{2\alpha}{\lambda}\id$ is semi-negative. Thus, $\|\nabla F(x)-\nabla F(y)\| \le \frac{1}{\lambda^2}\|x-y\|$. If $\|B\|^2 > \frac{2\alpha}{\lambda}$, then, by using the inequality $\|p-q\|\le \alpha^{-1}\|B\| \|x-y\|$, we have
$$
(p-q)^\top (BB^\top-\frac{2\alpha}{\lambda}\id)(p-q)\le  \frac{\|B\|^2}{\alpha^2}({\|B\|^2}-\frac{2\alpha}{\lambda})\|x-y\|^2.
$$
The result follows immediately.

(ii): By the definition of the Moreau envelope, we have
    \begin{align*}
        F(x) &= \frac{1}{2\lambda}\|x-z\|^2 - \min \left\{ \frac{1}{2\alpha}\|u - Bx\|^2 + \phee(u) : u \in \mathbb{R}^n \right\}\\
        & = \frac{1}{2\lambda} \|x-z\|^2 - \frac{1}{2\alpha}\|Bx\|^2 + \frac{1}{2\alpha} \max \left\{ 2 \langle B^\top u, x \rangle - \|u\|^2 - 2\alpha \phee(u) : u\in \mathbb{R}^n \right\}.
    \end{align*}
    Since
    $$ \frac{1}{2\lambda}\|x-z\|^2 - \frac{1}{2\alpha}\|Bx\|^2 = x^\top \left(\frac{1}{2\lambda} \id - \frac{1}{2\alpha} B^\top B\right) x + \frac{1}{2\lambda}(\|z\|^2 - 2z^\top x),$$
    which is strictly convex if $\lambda < \frac{\alpha}{\|B\|^2}$, and $\max \left\{ 2 \langle B^\top u, x \rangle - \|u\|^2 - 2\alpha \phee(u) : u\in \mathbb{R}^n \right\}$ is convex as a function of $x$, we see that $F$ is strictly convex. Finally, if $\lambda = \frac{\alpha}{\|B\|^2}$, it is clear that $F$ is convex.
\end{proof}

Algorithm \ref{alg:PD} is the direct application of \eqref{alg:PD1}-\eqref{alg:PD3} to \eqref{eq:Primal} through the identifications \eqref{eq:FGH}. We note that for the given $F$,
\begin{equation*}
    \nabla F(x) = \frac{1}{\lambda}(x-z) - B^\top \nabla \env_\alpha \phee(Bx).
\end{equation*}
Applying the Moreau Identity, we write $\nabla \env_\alpha \phee (Bx) = \prox_{\alpha^{-1} \phee^*} (\alpha^{-1}Bx).$

\begin{algorithm}[h]
\caption{Primal-Dual Splitting Algorithm for \eqref{eq:Primal}}\label{alg:PD}
\KwIn{
   {Initialization: Choose the positive parameters $\tau$, $\sigma$, the sequence of positive relaxation parameters $(\rho_n)_{n \in \mathbb{N}}$ and the initial estimates $\x{0} \in \mathbb{R}^d$, $\y{0} \in \mathbb{R}^n$.}
   }
%\KwResult{$u^\infty$}
\For{$n=0, 1, \ldots$}
{
\begin{eqnarray*}
\xx{k+1} &\leftarrow& \mathrm{proj}_C\left(\x{k}-\tau \left(\frac{1}{\lambda}(\x{k}-z)\right) + \tau B^\top \left(\prox_{\alpha^{-1} \varphi^*}(\alpha^{-1} B\x{k}) - \y{k}\right) \right)\\
\yy{k+1} &\leftarrow& \prox_{\sigma \varphi^*}\left(\y{k}+\sigma B(2\xx{k+1}-\x{k})\right)\\
\begin{bmatrix}\x{k+1}\\ \y{k+1}\end{bmatrix} &\leftarrow& \rho\begin{bmatrix} \xx{k+1} \\ \yy{k+1} \end{bmatrix} + (1-\rho)\begin{bmatrix}\x{k} \\ \y{k}\end{bmatrix}
\end{eqnarray*}
}
\end{algorithm}

\begin{theorem}\label{cor:PD_convergence}
Let $\lambda$, $\alpha$, and $z$ be as in problem~\eqref{eq:Primal}, and let $\tau$, $\sigma$, and $\rho$ be the parameters in Algorithm~\ref{alg:PD}. Suppose that $\lambda < \frac{\alpha}{\|B\|^2}$ and the following hold:
\begin{itemize}
  \item[(i)] $\frac{1}{\tau}-\sigma \|B\|^2 > \frac{1}{2\lambda}$;
  \item[(ii)] $\rho \in (0,1]$.
\end{itemize}
Then the sequence $(\x{k})_{k\in \mathbb{N}}$  produced by Algorithm \ref{alg:PD} converges to a solution of the problem~\eqref{eq:Primal}.
\end{theorem}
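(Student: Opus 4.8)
The plan is to apply Condat's convergence result, Proposition~\ref{prop:condat}, to the reformulation of~\eqref{eq:Primal} encoded by the identifications~\eqref{eq:FGH}. First I would confirm that~\eqref{eq:Primal} is genuinely an instance of the generic model~\eqref{eq:PrimalDual_Generic_Model}. Using $\phee_\alpha = \phee - \env_\alpha \phee$, the objective of~\eqref{eq:Primal} together with the constraint $x \in C$ rewrites as
\[
\frac{1}{2\lambda}\|x-z\|^2 + \phee_\alpha(Bx) + \iota_C(x) = \Big(\frac{1}{2\lambda}\|x-z\|^2 - \env_\alpha \phee(Bx)\Big) + \iota_C(x) + \phee(Bx) = F(x) + G(x) + H(Bx),
\]
so solving~\eqref{eq:Primal} is exactly solving~\eqref{eq:PrimalDual_Generic_Model} with $F$, $G$, $H$ as in~\eqref{eq:FGH}, and Algorithm~\ref{alg:PD} is precisely the instantiation of~\eqref{alg:PD1}--\eqref{alg:PD3} for this data.

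Next I would verify the hypotheses of Proposition~\ref{prop:condat}. Convexity of $G = \iota_C$ follows from $C$ being closed and convex, convexity of $H = \phee$ is part of the standing assumption on $\phee$, and convexity (in fact strict convexity) of $F$ follows from item~(ii) of Proposition~\ref{prop:Lips} together with the hypothesis $\lambda < \frac{\alpha}{\|B\|^2}$. Differentiability of $F$ and the $L$-Lipschitz continuity of $\nabla F$ are furnished by item~(i) of Proposition~\ref{prop:Lips}. Since the objective is strictly convex and coercive (the quadratic term dominates), Lemma~\ref{lem:Convexity} guarantees that a unique global minimizer exists, so there is indeed a solution for the iterates to converge to.

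The crux is reconciling the two parameter conditions: Proposition~\ref{prop:condat} requires $\frac{1}{\tau} - \sigma \|B\|^2 > \frac{L}{2}$, whereas hypothesis~(i) of the theorem only supplies $\frac{1}{\tau} - \sigma \|B\|^2 > \frac{1}{2\lambda}$. I would close this gap by exploiting the standing assumption $\lambda < \frac{\alpha}{\|B\|^2}$, which gives $\|B\|^2 < \frac{\alpha}{\lambda} < \frac{2\alpha}{\lambda}$; by the first branch of the formula for $L$ in item~(i) of Proposition~\ref{prop:Lips}, this forces $L = \frac{1}{\lambda}$. Hence hypothesis~(i) reads exactly $\frac{1}{\tau} - \sigma \|B\|^2 > \frac{L}{2}$, and with $\rho \in (0,1]$ all hypotheses of Proposition~\ref{prop:condat} are met, so $(\x{k})_{k\in\mathbb{N}}$ converges to a solution of~\eqref{eq:PrimalDual_Generic_Model}, equivalently of~\eqref{eq:Primal}. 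I expect this simplification of the Lipschitz constant to be the only genuinely nontrivial point: absent the assumption $\lambda < \frac{\alpha}{\|B\|^2}$ the second branch of $L$ is strictly larger than $\frac{1}{\lambda}$, and hypothesis~(i) would then be too weak to invoke Condat's result directly.
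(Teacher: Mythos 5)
Your proposal is correct and follows essentially the same route as the paper's proof: both reduce the theorem to Proposition~\ref{prop:condat} via the identifications~\eqref{eq:FGH}, citing Lemma~\ref{lem:Convexity} and Proposition~\ref{prop:Lips} for (strict) convexity and smoothness, and both rest on the fact that $\lambda < \frac{\alpha}{\|B\|^2}$ implies $\|B\|^2 < \frac{2\alpha}{\lambda}$, so the first branch of the Lipschitz formula gives $L = \frac{1}{\lambda}$ and hypothesis~(i) becomes exactly Condat's condition $\frac{1}{\tau} - \sigma\|B\|^2 > \frac{L}{2}$. Your version merely spells out this branch-selection step, which the paper asserts without comment.
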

\begin{proof}\ \ By Lemma~\ref{lem:Convexity} and Proposition~\ref{prop:Lips}, if $\lambda < \frac{\alpha}{\|B\|^2}$, then the objective function of problem~\eqref{eq:Primal} is strictly convex, and the gradient of $F$ given in \eqref{eq:FGH} is $\frac{1}{\lambda}$-Lipschitz continuous. Hence, the convergence of the sequence  $(\x{k})_{k\in \mathbb{N}}$ is the consequence of Proposition~\ref{prop:condat}.
\end{proof}

%%%%%%%%%%%%%%%%%%%%%%%%%%%
%------------SECTION: Difference of Convex------------%
%%%%%%%%%%%%%%%%%%%%%%%%%%%

\subsection{Difference of Convex Algorithm}
By $\varphi_\alpha=\varphi-\mathrm{env}_\alpha\varphi$ from \eqref{defn:falpha}, set
\begin{equation}\label{eq:FGH-DC}
Q(x)=\frac{1}{2\lambda} \|x-z\|^2_2+\iota_C(x)+\varphi(Bx), \quad
P(x)=\mathrm{env}_\alpha(Bx),
\end{equation}
then model~\eqref{eq:Primal} can be viewed as a special case of the following generic model
\begin{equation}\label{eq:ModelDC}
\min \{Q(x) - P(x): x \in \mathbb{R}^d\},
\end{equation}
where both $P$ and $Q$ are convex functions. Due the objective function is the difference of convex (DC) functions, model~\eqref{eq:ModelDC} is referred  to as DC program.

DCA (DC algorithm) is based on local optimality conditions and duality in DC programming \cite{LeThi:MP:2018}. The main idea of DCA is as follow: at each iteration $k$, DCA approximates the second DC component $P(x)$ by the affine approximation $P_k(x)=P(\x{k})+\langle \y{k}, x-\x{k}\rangle$, with $\y{k} \in \partial P(\x{k})$, and minimizes the resulting convex function. DCA for \eqref{eq:ModelDC} is as follows:
\begin{eqnarray}
\y{k}  & \in & \partial P(\x{k}) \label{eq:BacisDC-1}\\
\x{k+1}  & \in & \arg\min \{ Q(x)- P_k(x): x\in \mathbb{R}^d\} \label{eq:BacisDC-2}
\end{eqnarray}
% \begin{algorithm}\label{alg:Basic-DCA}
% \caption{Basic DCA  scheme for \eqref{eq:ModelDC}}
% \KwIn{
% 	{Choose $\x{0} \in \mathrm{dom} \partial P$, $k=0$}
% 	}
% \For{$k = 0, 1, \dots$}
% {
% \begin{eqnarray}
% \y{k}  & \in & \partial P(\x{k}) \label{eq:BacisDC-1}\\
% \x{k+1}  & \in & \arg\min \{ Q(x)- P_k(x): x\in \mathbb{R}^d\} \label{eq:BacisDC-2}
% \end{eqnarray}
% }
% \end{algorithm}
As the optimal solution set of \eqref{eq:BacisDC-2} is $\partial Q^*(\y{k})$, the DCA scheme can be expressed in another form:
$$
\mbox{For $k=0,1,\ldots$, set}\quad \y{k}   \in  \partial P(\x{k}); \quad \x{k+1} \in \partial Q^*(\y{k}).
$$
We state the local convergence properties of DCA in the following theorem (see \cite{Tao-An:SIAMOPT:1998}).
\begin{theorem}[\cite{Tao-An:SIAMOPT:1998},Theorem 3.7]\label{thm:DC-Orig}
Suppose that the sequence $\{\x{k}\}_{k\in \mathbb{N}}$ is defined by the iterative scheme \eqref{eq:BacisDC-1}-\eqref{eq:BacisDC-2} for problem~\eqref{eq:ModelDC}. Then we have
\begin{itemize}
\item[(i)] The objective value sequence $\{Q(\x{k})-P(\x{k})\}_{k\in \mathbb{N}}$ is monotonically decreasing.
\item[(ii)] If the optimal value of problem~\eqref{eq:ModelDC} is finite and the sequence $\{\x{k}\}_{k\in \mathbb{N}}$ is bounded, then every limit point $x^\diamond$ of $\{\x{k}\}_{k\in \mathbb{N}}$ is a critical point of the problem.
\end{itemize}
\end{theorem}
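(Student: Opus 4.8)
The plan is to handle the two assertions separately, writing $f = Q - P$ for the objective of \eqref{eq:ModelDC} and recalling from \eqref{eq:FGH-DC} that $Q$ carries the quadratic data term while $P = \env_\alpha \varphi \circ B$. The engine behind both parts is the observation that the surrogate $P_k(x) = P(\x{k}) + \langle \y{k}, x - \x{k}\rangle$ is, since $\y{k} \in \partial P(\x{k})$ and $P$ is convex, a global affine minorant of $P$ that is tight at $\x{k}$; that is, $P_k \le P$ everywhere and $P_k(\x{k}) = P(\x{k})$.

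For part (i) I would chain three elementary facts. Since $\x{k+1}$ minimizes the convex surrogate $Q - P_k$ in \eqref{eq:BacisDC-2}, we have $Q(\x{k+1}) - P_k(\x{k+1}) \le Q(\x{k}) - P_k(\x{k})$; tightness gives $P_k(\x{k}) = P(\x{k})$, so the right-hand side equals $f(\x{k})$; and the minorant property gives $P_k(\x{k+1}) \le P(\x{k+1})$, so the left-hand side dominates $Q(\x{k+1}) - P(\x{k+1}) = f(\x{k+1})$. Combining the three yields $f(\x{k+1}) \le f(\x{k})$, which is assertion (i).

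For part (ii) the first step is to upgrade monotonicity to a sufficient-decrease estimate. The first-order optimality condition for \eqref{eq:BacisDC-2} reads $\y{k} \in \partial Q(\x{k+1})$, while by construction $\y{k} \in \partial P(\x{k})$. Because $Q$ contains $\frac{1}{2\lambda}\|\cdot - z\|^2$ it is $\frac{1}{\lambda}$-strongly convex; combining the strong-convexity inequality for $Q$ at the pair $(\x{k}, \x{k+1})$ with the subgradient inequality for $P$ at $(\x{k}, \x{k+1})$, the linear terms cancel and one obtains
\[ f(\x{k}) - f(\x{k+1}) \ge \frac{1}{2\lambda}\|\x{k+1} - \x{k}\|^2. \]
Since the optimal value is finite, $f$ is bounded below, so the monotone sequence $\{f(\x{k})\}$ converges and its successive differences tend to zero; the displayed bound then forces $\|\x{k+1} - \x{k}\| \to 0$. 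Now let $x^\diamond$ be a limit point, say $\x{k_j} \to x^\diamond$; then $\x{k_j+1} \to x^\diamond$ as well. As $\{\x{k}\}$ is bounded and $P$ is finite with Lipschitz gradient, $\y{k} = \nabla P(\x{k})$ and the duals $\{\y{k}\}$ are bounded, so after passing to a further subsequence $\y{k_j} \to y^\diamond$. Finally, the graphs of convex subdifferentials are closed: from $\y{k_j} \in \partial P(\x{k_j})$ we get $y^\diamond \in \partial P(x^\diamond)$, and from $\y{k_j} \in \partial Q(\x{k_j+1})$ we get $y^\diamond \in \partial Q(x^\diamond)$. Hence $\partial P(x^\diamond) \cap \partial Q(x^\diamond) \ne \emptyset$, which is DC criticality; since $P$ is differentiable it reads $\nabla P(x^\diamond) \in \partial Q(x^\diamond)$, i.e. $0 \in \partial(Q - P)(x^\diamond)$.

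The main obstacle lies entirely in (ii). Part (i) is a soft, purely algebraic consequence of the minorant construction, but turning it into convergence requires quantitative control: the steps $\|\x{k+1} - \x{k}\|$ must vanish and the duals $\y{k}$ must stay bounded so that limits can be pushed through the inclusions $\y{k} \in \partial P(\x{k})$ and $\y{k} \in \partial Q(\x{k+1})$. The vanishing of the steps is where strong convexity of a DC component is indispensable, and here it is supplied for free by the quadratic term with modulus $\frac{1}{\lambda}$, while the limiting step rests on the outer semicontinuity (closed graph) of the convex subdifferential. In the fully general Tao--An statement, where neither component need be strongly convex, this passage is delicate and is handled through DC duality; in the present model the quadratic data term sidesteps that difficulty.
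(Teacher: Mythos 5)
The paper offers no proof of this theorem at all: it is imported verbatim from Tao--An \cite{Tao-An:SIAMOPT:1998}, so the honest comparison is against that general result and against the way the paper uses it. Your part (i) is the standard argument, fully general, and correct: it needs only convexity of $P$ and the minorant/tightness property of $P_k$. Part (ii), however, quietly proves a different, specialized statement. The theorem as quoted concerns the generic DC program \eqref{eq:ModelDC} with arbitrary convex $P$ and $Q$, whereas your argument invokes the concrete decomposition \eqref{eq:FGH-DC}: you use $\tfrac{1}{\lambda}$-strong convexity of $Q$ to get the sufficient decrease $f(\x{k})-f(\x{k+1})\ge\tfrac{1}{2\lambda}\|\x{k+1}-\x{k}\|^2$, and differentiability of $P=\env_\alpha\varphi\circ B$ to write $\y{k}=\nabla P(\x{k})$ and bound the dual sequence. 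Neither property is available in the general statement; without a positive strong-convexity modulus the steps $\|\x{k+1}-\x{k}\|$ need not vanish, and then your limiting step through $\y{k_j}\in\partial Q(\x{k_j+1})$ collapses, since $\x{k_j+1}$ no longer converges to $x^\diamond$. (The dual boundedness is the lesser issue: whenever $P$ is finite-valued, $\partial P$ is locally bounded, so boundedness of $\{\y{k}\}$ comes for free without any Lipschitz-gradient assumption.) Tao--An handle the general case through DC duality and $\epsilon$-subdifferential arguments built on $\x{k+1}\in\partial Q^*(\y{k})$, which you correctly flag but do not carry out; so as a proof of the quoted theorem, part (ii) has a genuine gap exactly at the vanishing-step claim.

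That said, your specialization is sound, and it is in fact the mathematically relevant content for this paper: the inequality $f(\x{k})-f(\x{k+1})\ge\tfrac{1}{2\lambda}\|\x{k+1}-\x{k}\|^2$, the conclusion $\|\x{k+1}-\x{k}\|\to 0$, and the boundedness of the iterates and duals are precisely what the paper establishes separately in Theorem~\ref{thm:DC}, by the same strong-convexity computation (there the dual bound is obtained as $\|\y{k}\|\le\tfrac{\|B\|^2}{\alpha}\|\x{k}\|$ via nonexpansiveness of $\id-\prox_{\alpha\varphi}$). In effect you have merged the cited black-box theorem with the paper's Theorem~\ref{thm:DC} and produced a self-contained convergence proof for the scheme \eqref{eq:BacisDC-1}--\eqref{eq:BacisDC-2} in the only instance the paper needs, going beyond the paper by also making the criticality of limit points explicit through the closed-graph argument for $\partial P$ and $\partial Q$. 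If you want the theorem as stated, either add the missing hypotheses (a positive total strong-convexity modulus of the pair, or boundedness assumptions matching Tao--An's) and rerun your argument, or defer to the duality proof in \cite{Tao-An:SIAMOPT:1998}; as written, your text should be presented as a proof of the specialized statement, not of the general one.
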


With these properties on DC programming in hands, we turn back to the problem~\eqref{eq:ModelDC} with $P$ and $Q$ given in \eqref{eq:FGH-DC}.
\begin{algorithm}\label{alg:DCA}
\caption{DCA  scheme for \eqref{eq:ModelDC} with $P$ and $Q$ given in \eqref{eq:FGH-DC}}
\KwIn{
	{Choose $\x{0} \in \mathrm{dom} \partial P$, $k=0$}
	}
\For{$k = 0, 1, \dots$}
{
\begin{eqnarray}
\y{k}  & \leftarrow & B^\top \nabla \env_\alpha \phee(B \x{k}) \label{eq:DC-1}\\
\x{k+1}  & \leftarrow & \arg\min \left\{\frac{1}{2\lambda}\|x-z\|^2+\iota_C(x)+\varphi(Bx)- \langle \y{k},x \rangle: x\in \mathbb{R}^d\right\} \label{eq:DC-2}
\end{eqnarray}
}
\end{algorithm}

\begin{theorem}\label{thm:DC}
Suppose that the sequences $\{\x{k}\}_{k\in \mathbb{N}}$ and $\{\y{k}\}_{k\in \mathbb{N}}$ are generated by Algorithm~\ref{alg:DCA} for problem~\eqref{eq:ModelDC} with $P$ and $Q$ given in \eqref{eq:FGH-DC}. Then every limit point $x^\diamond$ of $\{\x{k}\}_{k\in \mathbb{N}}$ is a critical point of the problem. Moreover, $\lim_{k \rightarrow \infty}\|\x{k+1}-\x{k}\|=0$.
\end{theorem}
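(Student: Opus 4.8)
The plan is to treat the two assertions separately: that every limit point is a critical point will follow from the generic DCA theorem (Theorem~\ref{thm:DC-Orig}) once its hypotheses are verified, while the vanishing of the successive differences will be extracted from the strong convexity of $Q$. The observation driving both parts is that $Q$ in \eqref{eq:FGH-DC} is $\tfrac{1}{\lambda}$-strongly convex: the quadratic term $\tfrac{1}{2\lambda}\|x-z\|^2$ is $\tfrac{1}{\lambda}$-strongly convex and the remaining terms $\iota_C(x)+\varphi(Bx)$ are convex, so their sum retains modulus $\tfrac{1}{\lambda}$. In particular the subproblem \eqref{eq:DC-2} has a unique minimizer, whose optimality condition reads $\y{k}\in\partial Q(\x{k+1})$, while by construction $\y{k}\in\partial P(\x{k})$.

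For the first assertion I would check the two hypotheses of Theorem~\ref{thm:DC-Orig}(ii). Finiteness of the optimal value holds because the objective $W(x)=Q(x)-P(x)=\tfrac{1}{2\lambda}\|x-z\|^2+\iota_C(x)+\varphi_\alpha(Bx)$ is bounded below by $0$, since $\varphi_\alpha\ge 0$ (a nonnegative difference of convex functions, cf. Lemma~\ref{lemma:properties}(i)), and is finite at any point of $C$. For boundedness of $\{\x{k}\}$, note that $W$ is coercive: the quadratic term forces $W(x)\to+\infty$ as $\|x\|\to\infty$ while the other two terms are nonnegative. Since $\x{k}\in C$ for $k\ge 1$ (the indicator $\iota_C$ appears in \eqref{eq:DC-2}) and $\{W(\x{k})\}$ is monotonically decreasing by Theorem~\ref{thm:DC-Orig}(i), the iterates remain in the sublevel set $\{x:W(x)\le W(\x{1})\}$, which is bounded by coercivity. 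Theorem~\ref{thm:DC-Orig}(ii) then yields that every limit point of $\{\x{k}\}$ is a critical point.

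For the second assertion I would turn the strong convexity of $Q$ into a sufficient-decrease inequality. Applying the strong-convexity subgradient inequality to $\y{k}\in\partial Q(\x{k+1})$ gives
\[Q(\x{k})\ge Q(\x{k+1})+\langle \y{k},\x{k}-\x{k+1}\rangle+\tfrac{1}{2\lambda}\|\x{k}-\x{k+1}\|^2,\]
while convexity of $P$ with $\y{k}\in\partial P(\x{k})$ gives $P(\x{k})-P(\x{k+1})\le\langle \y{k},\x{k}-\x{k+1}\rangle$. Subtracting, the inner-product terms cancel exactly and I obtain
\[W(\x{k})-W(\x{k+1})\ge\tfrac{1}{2\lambda}\|\x{k+1}-\x{k}\|^2.\]
Summing from $k=1$ to $N$ telescopes the left-hand side to $W(\x{1})-W(\x{N+1})\le W(\x{1})-\inf W<\infty$, so the series $\sum_k\|\x{k+1}-\x{k}\|^2$ converges and hence $\|\x{k+1}-\x{k}\|\to 0$.

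The delicate points here are bookkeeping rather than conceptual. The step that needs most care is the sufficient-decrease inequality: one must invoke the \emph{strong} convexity of $Q$ (not mere convexity) to produce the quadratic remainder and pair it with the correct subgradient inequality for $P$ so that the linear terms cancel. A secondary technicality is that $\x{0}$ is only required to lie in $\dom\partial P=\mathbb{R}^d$ and need not belong to $C$, so $W(\x{0})$ may be infinite; beginning the monotonicity and telescoping arguments at $k=1$, where $\x{1}\in C$ guarantees $W(\x{1})<\infty$, removes this issue.
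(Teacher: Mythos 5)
Your proposal is correct and takes essentially the same route as the paper's proof: verify the hypotheses of Theorem~\ref{thm:DC-Orig} (finite optimal value since $Q-P\ge 0$, bounded iterates since the quadratic term makes the decreasing objective values coercive), then combine the strong-convexity inequality for $Q$ at the minimizer $\x{k+1}$ with the subgradient inequality for $P$ at $\x{k}$ to get the sufficient decrease $W(\x{k})-W(\x{k+1})\ge\frac{1}{2\lambda}\|\x{k+1}-\x{k}\|^2$ and telescope. Your only deviations are minor: starting the monotonicity argument at $k=1$ actually patches a small oversight in the paper, which bounds everything by $Q(\x{0})-P(\x{0})$ even though $\x{0}\in\dom\partial P=\mathbb{R}^d$ need not lie in $C$, and you omit the paper's extra boundedness check for $\{\y{k}\}$ (via nonexpansiveness of $\id-\prox_{\alpha\varphi}$), which the quoted form of Theorem~\ref{thm:DC-Orig} does not require anyway.
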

\begin{proof}\ \ Recall that $Q(x)-P(x)=\frac{1}{2\lambda} \|x-z\|^2 +\iota_C(x)+\varphi_\alpha(Bx)$ which is nonnegative and continuous on its domain. Hence, the optimal value of problem~\eqref{eq:ModelDC} is finite. From item (i) of Theorem~\ref{thm:DC-Orig}, we have that
$$
\frac{1}{2\lambda} \|\x{k}-z\|^2 \le Q(\x{k})-P(\x{k}) \le Q(\x{0})-P(\x{0}) <\infty,
$$
it leads to the boundedness of the sequence $\{\x{k}\}_{k\in \mathbb{N}}$. From~\eqref{eq:DC-1} and the fact that $\id-\mathrm{prox}_{\alpha \varphi}$ is nonexpasive operator, we have $\|\y{k}\|= \frac{1}{\alpha}\|B^\top(\id-\mathrm{prox}_{\alpha \varphi})(B\x{k})\| \le \frac{\|B\|^2}{\alpha}\|\x{k}\|$,
hence the  $\{\y{k}\}_{k\in \mathbb{N}}$ is bounded. By item (ii) of Theorem~\ref{thm:DC-Orig}, we know that every limit point $x^\diamond$ of $\{\x{k}\}_{k\in \mathbb{N}}$ is a critical point of the problem.

By $\y{k} \in \partial P(\x{k})$, we have $P(\x{k+1}) \geq P(\x{k}) + \langle \y{k}, \x{k+1} - \x{k} \rangle.$ Since $Q$ is strongly convex and $\x{k+1}$ minimizes $Q(x) - \langle \y{k}, x \rangle$, we get
\[ Q(\x{k+1}) - \langle \y{k}, \x{k+1} \rangle \leq Q(\x{k}) - \langle \y{k}, \x{k} \rangle - \frac{1}{2\lambda}\|\x{k+1} - \x{k}\|^2. \]
Therefore, it follows that
\begin{eqnarray*}
&&Q(\x{k+1})-P(\x{k+1})\\
&\le& Q(\x{k+1}) -\left(P(\x{k})+\langle \y{k}, \x{k+1}-\x{k}\rangle\right)\\
&=& Q(\x{k+1}-\langle \y{k}, \x{k+1}\rangle - \left(P(\x{k})-\langle \y{k}, \x{k}\rangle\right) \\
&\le& Q(\x{k}) -\langle \y{k}, \x{k}\rangle-\frac{1}{2\lambda}\|\x{k+1}-\x{k}\|^2 - \left(P(\x{k})-\langle \y{k}, \x{k}\rangle\right)\\
&=&Q(\x{k})-P(\x{k})-\frac{1}{2\lambda}\|\x{k+1}-\x{k}\|^2.
\end{eqnarray*}
From this, we get
$$
\frac{1}{2\lambda}\|\x{k+1}-\x{k}\|^2 \le (Q(\x{k})-P(\x{k}))- (Q(\x{k+1})-P(\x{k+1})).
$$
Summing the above inequality for all $k$ from $0$ to infinity yields
$$
\frac{1}{2\lambda}\sum_{k=0}^\infty\|\x{k+1}-\x{k}\|^2 \le Q(\x{0})-P(\x{0}),
$$
which implies $\lim_{k \rightarrow \infty}\|\x{k+1}-\x{k}\|=0$.
\end{proof}

% \begin{remark}
% Algorithm \ref{alg:DCA} requires solving the problem \eqref{eq:ConvexPrimal} at each iteration. By including the affine approximation of the envelope, we essentially modify the initial point for this iterative problem. Note that for any function $f$ and any fixed $u \in \mathbb{R}^d$,
% \[\prox_{\lambda (f - \langle u, ~\cdot ~\rangle)}(z) = \prox_{\lambda f}(z+\lambda u). \]
% As before, this term boosts the signal, and is updated at each iteration to more closely match the true signal.
% \end{remark}

%%%%%%%%%%%%%%%%%%%%%%%%%%%
%%%%%%-SECTION: ADMM-%%%%%%
%%%%%%%%%%%%%%%%%%%%%%%%%%%

\subsection{Primal-Dual Hybrid Gradient Methods}
Set
\begin{equation}\label{eq:PQ-HPDG}
Q(x)=\frac{1}{2\lambda} \|x-z\|^2_2+\iota_C(x), \quad
P(x)=\varphi_\alpha(x),
\end{equation}
then model~\eqref{eq:Primal} can be viewed as a special case of the following generic model
\begin{equation}\label{eq:Model-HPD}
\min \{Q(x) + P(Bx): x \in \mathbb{R}^d\},
\end{equation}
where $P$ is semiconvex and $Q$ is convex. In this setting, a  primal-dual hybrid gradient (PDHG) method was proposed for  model~\eqref{eq:Model-HPD} in \cite{Mollenhoff:Strekalovskiy:Moeller:Cremers:SIAMIS:2015} as follows: Given a pair $(\x{0},\ttheta{0}) \in \mathbb{R}^d \times \mathbb{R}^n$ and for $\bar{x}^{(0)}=\x{0}$, $\sigma>0$, $\tau>0$, and $\rho \in [0,1]$, iterate for all $k\ge 0$
\begin{eqnarray}
% \nonumber % Remove numbering (before each equation)
  \uu{k+1} &=& \mathrm{argmin}\left\{ \frac{\sigma}{2}\|u-B \bar{x}^{(k)}\|^2-\langle u, \ttheta{k} \rangle + P(u): u \in \mathbb{R}^d \right\} \label{tmp1:Alg:HPD} \\
  \ttheta{k+1} &=& \ttheta{k}+\sigma(B\bar{x}^{(k)}-\uu{k+1})  \label{tmp2:Alg:HPD}  \\
  \x{k+1} &=& \mathrm{argmin} \left\{\frac{1}{2\tau} \|x-\x{k}\|^2+\langle Bx, \ttheta{k+1} \rangle +Q(x): x \in  \mathbb{R}^n \right\} \label{tmp3:Alg:HPD}  \\
  \bar{x}^{(k+1)} &=& \x{k+1}+\rho (\x{k+1}-\x{k}) \label{tmp4:Alg:HPD}
\end{eqnarray}

We first show that the solution to the minimization problem~\eqref{tmp1:Alg:HPD} can be explicitly given as follows:
\begin{equation}\label{tmp1:Alg:HPD-sln}
\uu{k+1} = \prox_{\sigma^{-1} \varphi_\alpha}\left(B\bar{x}^{(k)}+\frac{1}{\sigma}\ttheta{k} \right)
\end{equation}
Next, the solution to the minimization problem~\eqref{tmp3:Alg:HPD} is the solution of the following linear system
3`\begin{equation}\label{tmp3:Alg:HPD-sln}
\x{k+1} = \mathrm{proj}_C\left(\frac{\lambda}{\tau+\lambda} \x{k} + \frac{\tau}{\tau+\lambda}z - \frac{\tau \lambda}{\tau+\lambda} B^\top \ttheta{k+1}  \right)
\end{equation}

Below we give our PDHG-based algorithm for solving the optimization problem~\eqref{eq:Model-HPD}.
\begin{algorithm}\label{alg:HPDG}
\caption{PDHG scheme for problem~\eqref{eq:Model-HPD}}
\KwIn{
	{$z$, $\lambda>0$, $\alpha>0$, $\rho \in [0,1]$. Initialize $\x{0}=z$,  $\ttheta{0}=0$, $\bar{x}^{(0)}=\x{0}$}
	}
\For{$k = 0, 1, \dots$}
{
\begin{itemize}
\item[1)]  $\uu{k+1} \leftarrow \prox_{\sigma^{-1} \varphi_\alpha}\left(B\bar{x}^{(k)}+\frac{1}{\sigma}\ttheta{k} \right)$
\item[2)] $\ttheta{k+1} \leftarrow \ttheta{k}+\sigma(B\bar{x}^{(k)}-\uu{k+1})$
\item[2)] $\x{k+1} \leftarrow \mathrm{proj}_C\left(\frac{\lambda}{\tau+\lambda} \x{k} + \frac{\tau}{\tau+\lambda}z - \frac{\tau \lambda}{\tau+\lambda} B^\top \ttheta{k+1}  \right)$
\item[3)] $\bar{x}^{(k+1)} \leftarrow \x{k+1}+\rho (\x{k+1}-\x{k})$
\end{itemize}
}
\end{algorithm}

\begin{theorem}\label{thm:HPDG}
For optimization model~\eqref{eq:Model-HPD} with $P$ and $Q$ given in \eqref{eq:PQ-HPDG}, if $\alpha \ge \lambda \|B\|^2$, then Algorithm~\ref{alg:HPDG} converges the unique solution $x^\star$ of model~\eqref{eq:Model-HPD} for $\sigma \alpha=2$, $\tau \sigma \|B\|^2 \le 1$, and any $\rho \in [0, 1]$, with rate $\|\x{k}-x^\star\|^2 \le \widetilde{C}/n$ for some constant $\widetilde{C}$.
\end{theorem}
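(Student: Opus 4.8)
The plan is to recognize Algorithm~\ref{alg:HPDG} as the instantiation of the semiconvex primal--dual hybrid gradient scheme \eqref{tmp1:Alg:HPD}--\eqref{tmp4:Alg:HPD} applied to the splitting \eqref{eq:PQ-HPDG}, and to obtain both convergence and rate by verifying the hypotheses of the convergence theorem in \cite{Mollenhoff:Strekalovskiy:Moeller:Cremers:SIAMIS:2015}. The first preparatory step is to confirm that under the standing assumption $\alpha \ge \lambda\|B\|^2$ the objective $Q(x) + P(Bx) = \frac{1}{2\lambda}\|x-z\|^2 + \iota_C(x) + \varphi_\alpha(Bx)$ is convex, so that a minimizer exists and is unique: when $\alpha > \lambda\|B\|^2$, Lemma~\ref{lem:Convexity} gives strict convexity, and in the degenerate case $\alpha = \lambda\|B\|^2$ uniqueness still follows from the $\frac{1}{\lambda}$-strong convexity of $Q$. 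This guarantees that any limit produced by the scheme is the genuine global minimizer $x^\star$ rather than a spurious critical point, which is the essential gain of the hypothesis $\alpha \ge \lambda\|B\|^2$.

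Next I would verify the two structural hypotheses of \cite{Mollenhoff:Strekalovskiy:Moeller:Cremers:SIAMIS:2015}: that $Q$ is $\frac{1}{\lambda}$-strongly convex (since $Q - \frac{1}{2\lambda}\|\cdot\|^2 = \iota_C$ plus an affine term is convex) and that $P = \varphi_\alpha$ is semiconvex with modulus $\frac{1}{\alpha}$, which is precisely item~(ii) of Lemma~\ref{lemma:properties}. With these moduli identified, the step-size prescription of the theorem lines up with their admissibility conditions. The equality $\sigma\alpha = 2$ forces $\sigma = 2/\alpha > 1/\alpha$, so $\frac{\sigma}{2}\|\cdot\|^2 + P$ is strongly convex; hence the dual subproblem \eqref{tmp1:Alg:HPD} is a genuine single-valued proximal evaluation, justifying the explicit update \eqref{tmp1:Alg:HPD-sln}. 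The coupling $\tau\sigma\|B\|^2 \le 1$ is the standard step-size condition controlling the linear operator $B$, and the primal subproblem \eqref{tmp3:Alg:HPD} is a strongly convex quadratic over $C$ whose minimizer is the projection \eqref{tmp3:Alg:HPD-sln}, obtained by zeroing the gradient of the smooth part and projecting onto $C$.

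With the hypotheses in place, I would invoke the convergence and rate result of \cite{Mollenhoff:Strekalovskiy:Moeller:Cremers:SIAMIS:2015}: the $\frac{1}{\lambda}$-strong convexity of $Q$ together with the strictly over-relaxed dual step produces a Lyapunov energy inequality whose telescoping yields the distance bound $\|\x{k} - x^\star\|^2 \le \widetilde{C}/n$, with $\widetilde{C}$ depending on the initial gap and the step sizes, for any $\rho \in [0,1]$. The main obstacle I expect is not any single calculation but the bookkeeping needed to certify that the semiconvexity of $P$ is dominated by the chosen $\sigma$: one must track how the concave term $-\frac{1}{2\alpha}\|u\|^2$ implicit in $\varphi_\alpha$ enters the descent estimate for the dual update and confirm that the \emph{equality} $\sigma\alpha = 2$, rather than the mere well-posedness condition $\sigma\alpha > 1$, is what the rate argument of \cite{Mollenhoff:Strekalovskiy:Moeller:Cremers:SIAMIS:2015} requires to close the $O(1/n)$ estimate. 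Once this matching is established, the global convexity ensured by $\alpha \ge \lambda\|B\|^2$ guarantees the limit is exactly $x^\star$, completing the argument.
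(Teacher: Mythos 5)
Your proposal is correct and takes essentially the same route as the paper: the paper's proof likewise observes that $Q$ is $\frac{1}{\lambda}$-strongly convex and $P=\varphi_\alpha$ is $\frac{1}{\alpha}$-semiconvex (Lemma~\ref{lemma:properties}(ii)) and then directly invokes Theorem~2.8 of \cite{Mollenhoff:Strekalovskiy:Moeller:Cremers:SIAMIS:2015} for the stated $\sigma$, $\tau$, and $\rho$. Your extra bookkeeping---uniqueness of $x^\star$ under $\alpha \ge \lambda\|B\|^2$ and the well-posedness of the updates \eqref{tmp1:Alg:HPD-sln} and \eqref{tmp3:Alg:HPD-sln}---is sound but simply makes explicit what the paper leaves to the cited theorem.
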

\begin{proof}\ \ As we know, $P$ is $\frac{1}{\alpha}$-semiconvex and $Q$ is $\frac{1}{\lambda}$-strongly convex. By Theorem~2.8 in \cite{Mollenhoff:Strekalovskiy:Moeller:Cremers:SIAMIS:2015}, the conclusion of this theorem holds for the given parameters $\sigma$, $\tau$.
\end{proof}

%%%%%%%%%%%%%%%
%SECTION: DISCUSSION%
%%%%%%%%%%%%%%%

\subsection{Discussion}
%We now take a slightly different perspective of Algorithms \ref{alg:PD}-\ref{alg:HPDG}. Namely, we may consider \eqref{eq:Primal} as a modification of the following model:
%\begin{equation}\label{eq:ConvexPrimal}\tag{$\mathcal{P}_0$}
%\argmin \left\{ ~ \frac{1}{2\lambda}\|x-z\|^2 + \phee(Bx) ~: x \in C \right\}
%\end{equation}
%where $\phee$ is a convex SPF, as in the classical TV denoising models. The corresponding algorithms for \eqref{eq:ConvexPrimal} can be found by replacing $\env_\alpha \phee (Bx)$ with zero, and thus Algorithms \ref{alg:PD} - \ref{alg:HPDG} can be seen as modifications of these methods. Here we discuss how the inclusion of the envelope affects each of the algorithms.

As noted above, one of the main motivations for using nonconvex penalties is to avoid biased solutions. We now provide some discussion to show how this is accomplished in practice in each of the above algorithms. To illustrate these ideas, we look at the example of piecewise constant signals in $\mathbb{R}^d$. To be precise, we set $\phee = \|\cdot\|_1$, $C = \mathbb{R}^d$, and let $B$ be the one dimensional difference matrix. The vector $z \in \mathbb{R}^d$ is the noisy observation from which we hope to recover the true signal. Piecewise constant signals are sparse under the transformation $B$; in other words, all of the information about these signals is contained in the amplitude changes. When noise is added, the signal becomes nonsparse, though we assume that the noise is small compared to the signal. An example of such a signal and the noisy observation are given in Figure \ref{fig:signals}.

\begin{figure}[h]
\begin{center}
\begin{tabular}{cc}
\includegraphics[scale=0.25]{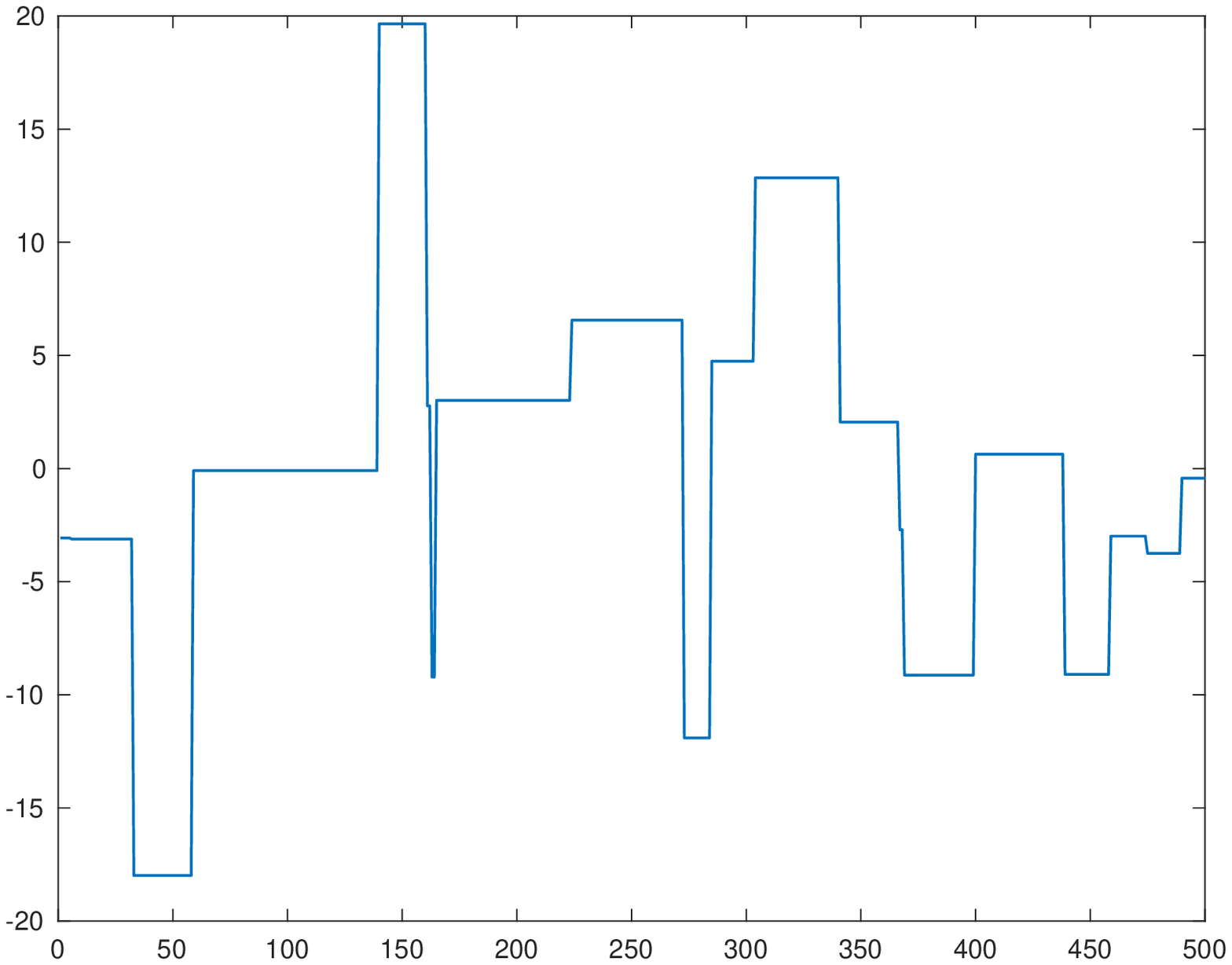} & \includegraphics[scale=0.25]{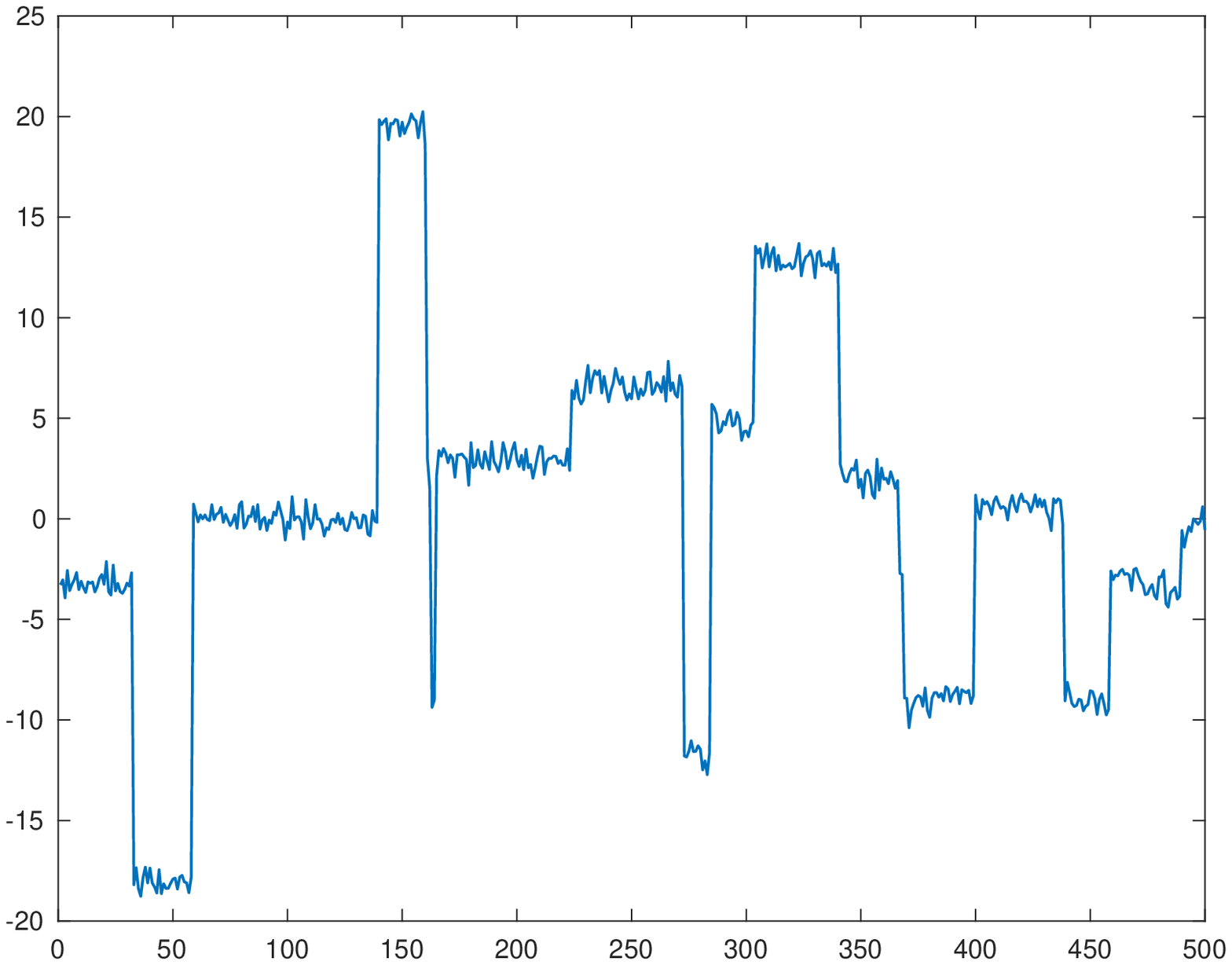} \\
(a) & (b) \\
\includegraphics[scale=0.25]{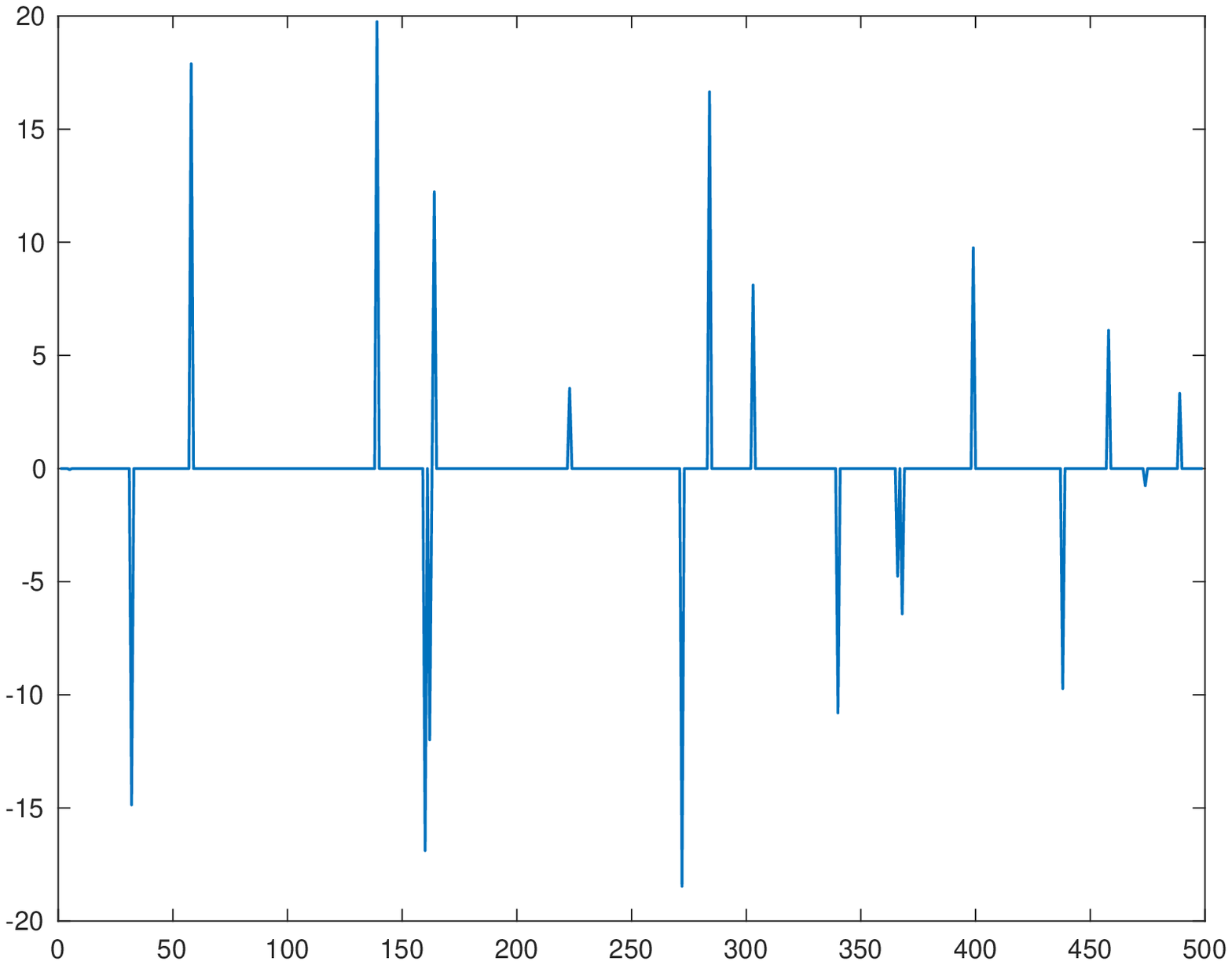} & \includegraphics[scale=0.25]{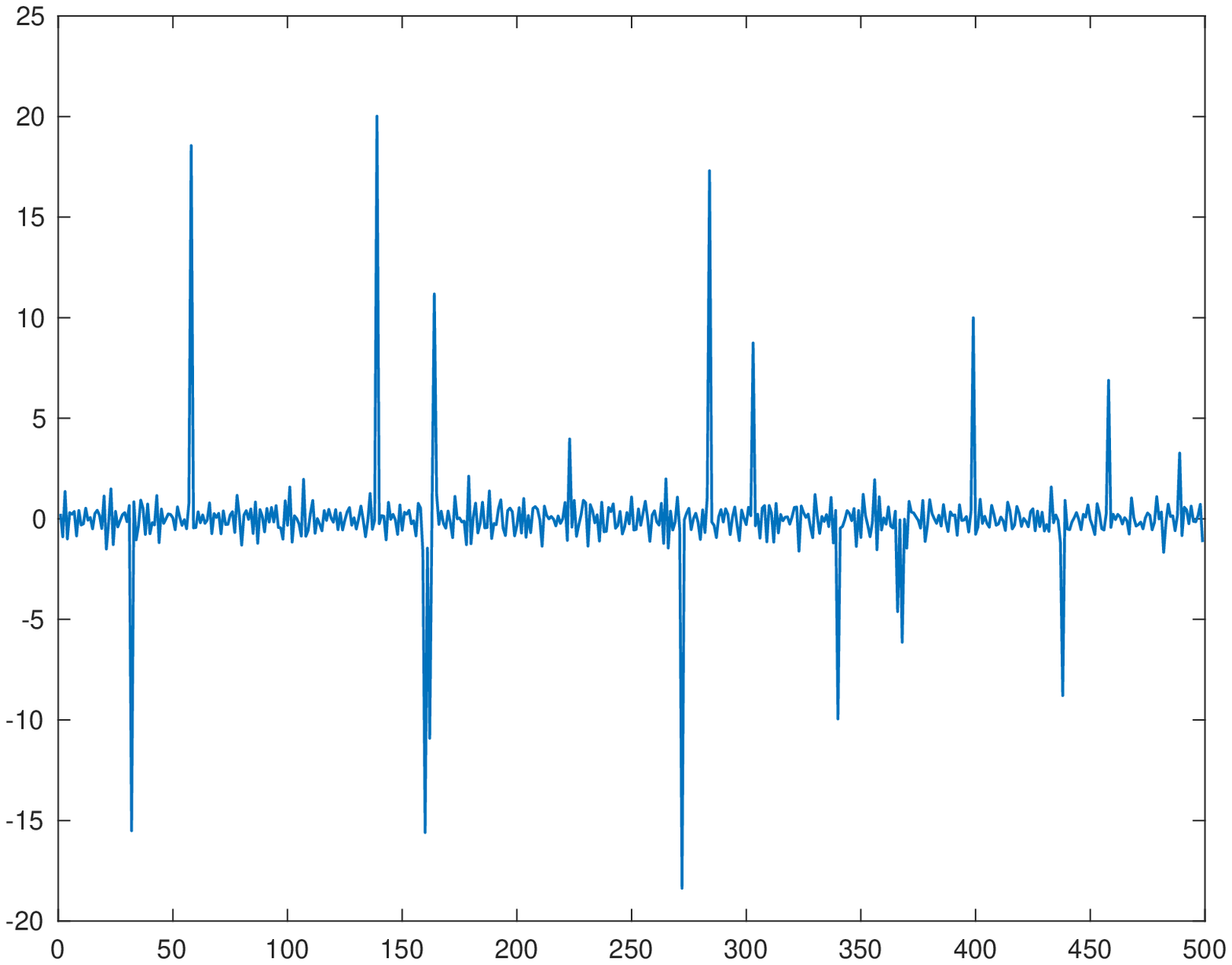}\\
(c) & (d)
\end{tabular}
\caption{(a) A piecewise constant signal $x$, (b) the signal with additive Gaussian noise $z$, (c) the sparse representation $Bx$, and (d) the nonsparse $Bz$.}
\label{fig:signals}
\end{center}
\end{figure}

In Algorithm \ref{alg:PD}, the primal updates are
\[ \xx{k+1} = \mathrm{proj}_C\left(\x{k}-\frac{\tau}{\lambda}(\x{k}-z)+ \tau B^\top \left(\nabla \env_\alpha \phee(B\x{k}) - \y{k}\right) \right) \]
where $\nabla (\env_\alpha \phee \circ B) (\x{k}) = B^\top \prox_{\alpha^{-1} \phee^*} (\alpha^{-1} B\x{k})$, as written in Section \ref{sec:Algorithms}. When $\phee = \|\cdot\|_1$, this term is projection of the differences of the current iterate onto the $\ell_\infty$ unit ball $\{ x \in \mathbb{R}^d : \|x\|_\infty \leq 1 \}$. This moves $\x{k}$ away from the set $\argmin \env_\alpha \phee \circ B = \argmin \phee \circ B$, which keeps relevant data from being pulled to zero.  As shown in Figure \ref{fig:PDupdates}, the addition of this term boosts the features of the current iterate in proportion to their magnitude, balancing the shrinkage enforced by the dual update.
% Viewed as a positive gradient step, we see that $\x{k} + \tau \nabla (\env_\alpha \phee \circ B) (\x{k})$ moves $\x{k}$ away from the minimizer of $\phee\circ B$. In particular, viewed as $\prox_{\alpha^{-1} \phee^*}$, this term emphasizes the features which should not be sent to zero based on the current iteration. For example, when $\phee = \|\cdot\|_1$ and $B$ is the first-order difference matrix, this term projects the scaled vector of differences to the $\ell_\infty$ unit ball while $B^\top$ computes a second order difference. This boosts jumps in the signal proportional to their magnitude, thus fighting the shrinkage induced in the dual update.

\begin{figure}[H]
\begin{center}
\begin{tabular}{cc}
\includegraphics[scale=0.25]{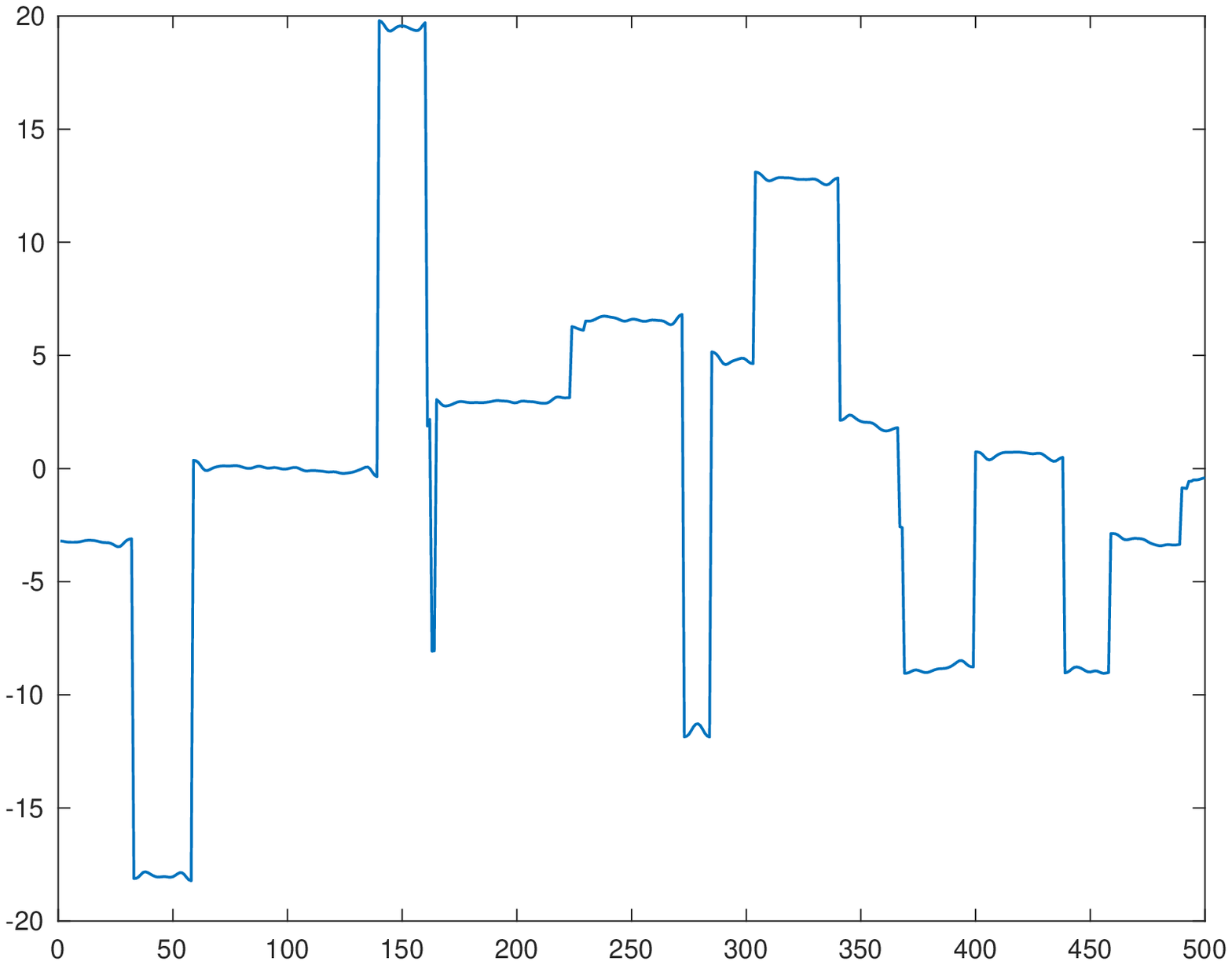} & \includegraphics[scale=0.25]{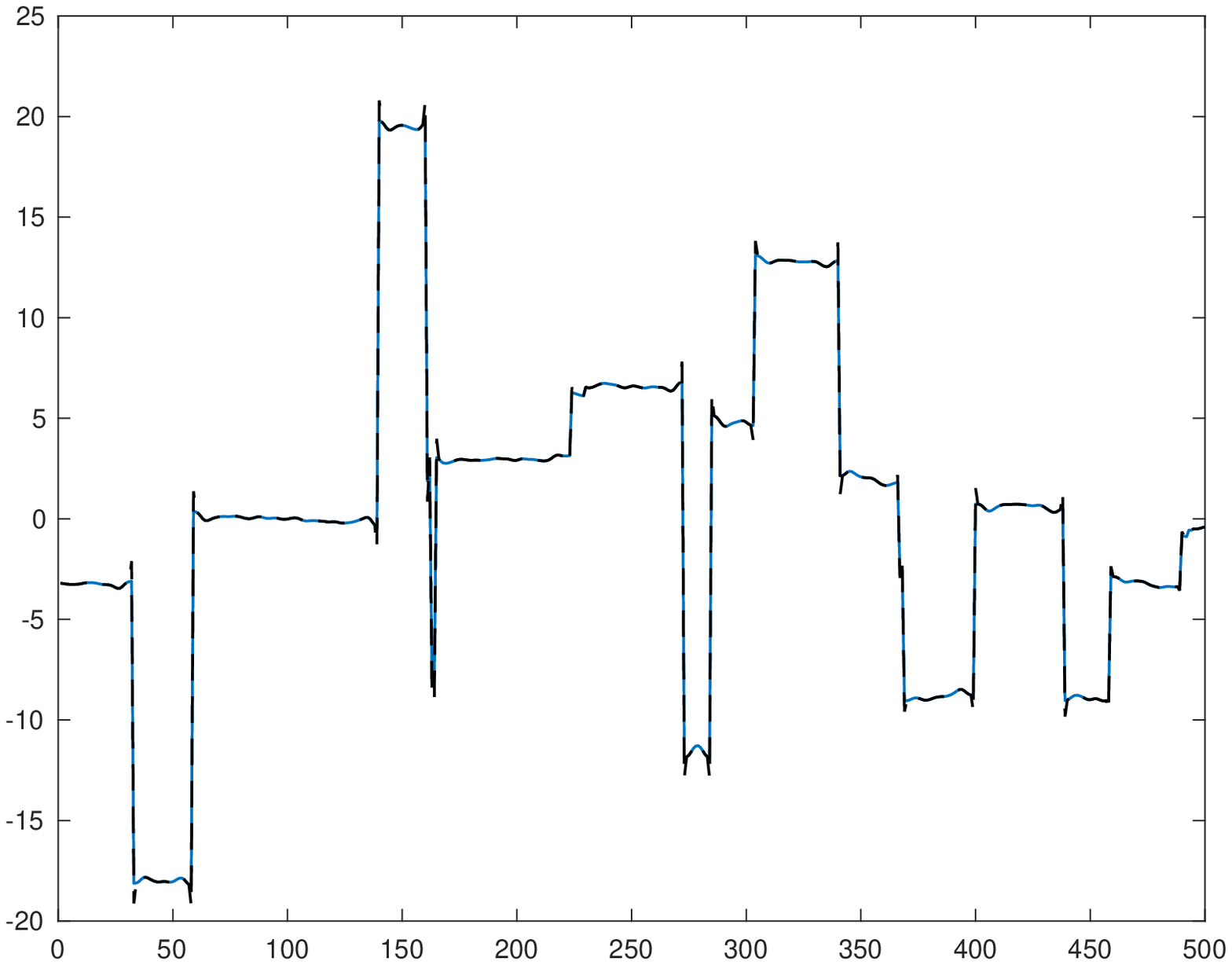}\\
(a) & (b) \\

\end{tabular}
\caption{Algorithm \ref{alg:PD}. (a) One iterate $\x{k}$ and (b) $\x{k} + B^\top \nabla \env_\alpha \phee(B \x{k})$ (dashed black) over $\x{k}$ (solid blue). The scaling factor $\tau$ is omitted for visibility.} \label{fig:PDupdates}
\end{center}
\end{figure}

Algorithm \ref{alg:DCA} requires solving a convex optimization problem in each iteration. Note that
\begin{align*} &\argmin \{ \phee(Bx) + \frac{1}{2\lambda}\|x-z\|^2 - \langle \nabla (\env_\alpha \phee \circ B)(\x{k}), x \rangle : x \in \mathbb{R}^d \} \\ = & \argmin \{ \phee(Bx) + \frac{1}{2\lambda}\|x - (z + \lambda \nabla (\env_\alpha \phee \circ B)(\x{k})\|^2 : x \in \mathbb{R}^d \}. \end{align*}
That is, this algorithm modifies the noisy signal at each iteration using the most recent update. This is very similar to Bregman iterations for solving the TV denoising problem with the subgradient of $\| B \cdot \|_1$ replaced by the gradient of the envelope (see \cite{Yin-Osher:Bregman:2008}). As before, this boosts the relevant features of the signal, as illustrated in Figure \ref{fig:DCupdates}.
% By including the affine approximation of the envelope, we essentially modify the initial point for this iterative problem.
%As before, this term boosts the signal, and is updated at each iteration using the most recent information. This is illustrated in Figure \ref{fig:DCupdates}.

\begin{figure}[H]
\begin{center}
\begin{tabular}{cc}
\includegraphics[scale=0.25]{noisysignal2.eps} & \includegraphics[scale=0.25]{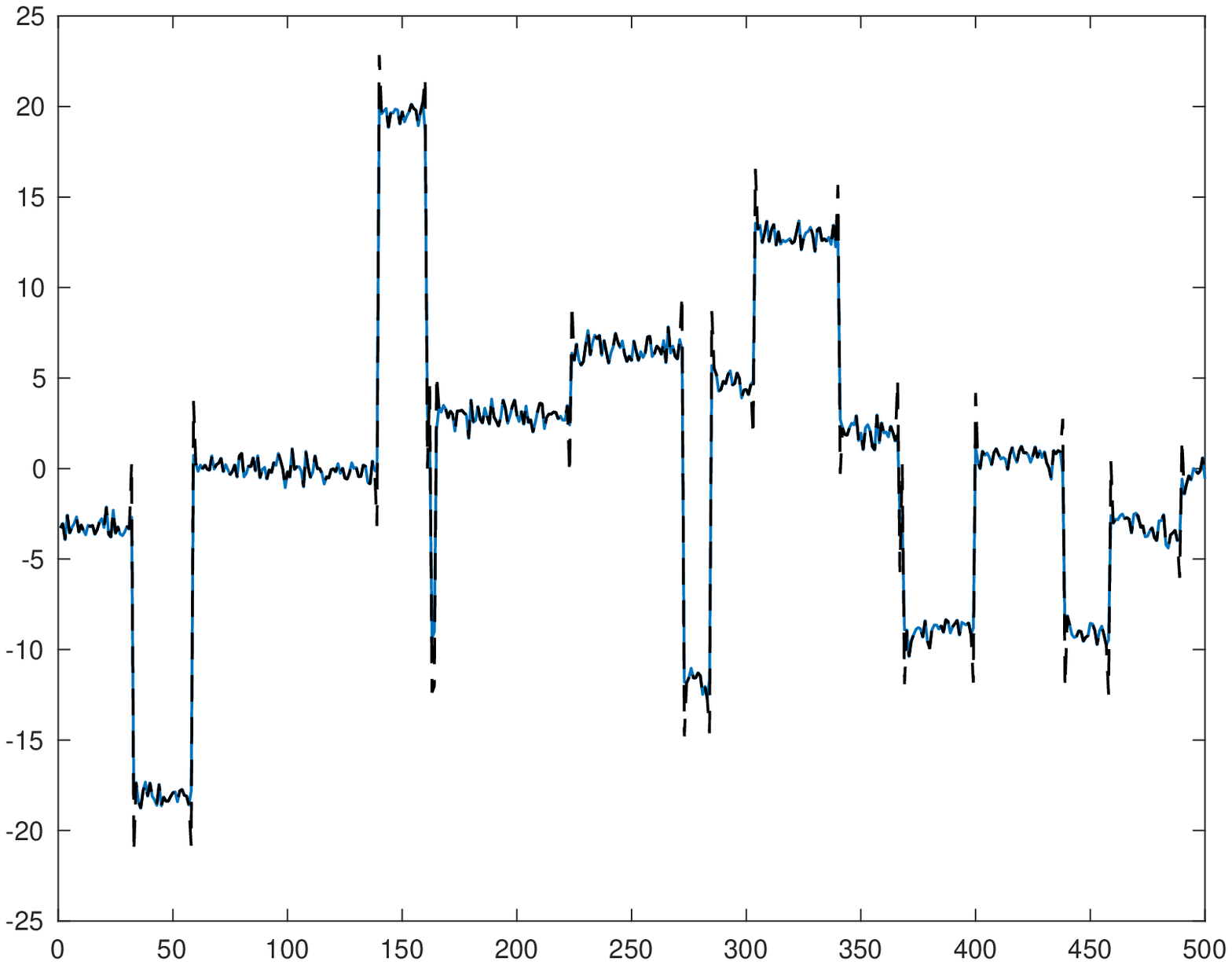} \\
(a) & (b)
\end{tabular}
\caption{Algorithm \ref{alg:DCA}. The noisy signal (a) and the L1 initialization points $z + \lambda B^\top \env_\alpha \phee(B\x{k})$ for (b)  $k = 4$. } \label{fig:DCupdates}
\end{center}
\end{figure}

Algorithm \ref{alg:HPDG} uses the proximity operator of $\phee_\alpha$ directly, splitting the problem into a sparsity update and a fidelity update. As sparsity promoting functions, the proximity operators of both $\phee$ and $\phee_\alpha$ send small entries to zero. However, the nonconvexity of $\phee_\alpha$ gives us a greater tolerance for large entries. For instance, when $\phee = \|\cdot\|_1$, $\prox_{\phee}$ shrinks all entries towards zero, while $\prox_{\phee_\alpha}$ is the identity on entries beyond a certain threshold. These large entries correspond to true signal information. This is illustrated in Figure \ref{fig:PDHGupdates}.

\begin{figure}[h]
\begin{center}
\begin{tabular}{ccc}
\includegraphics[scale=0.25]{originaldifference2.eps} &
\includegraphics[scale=0.25]{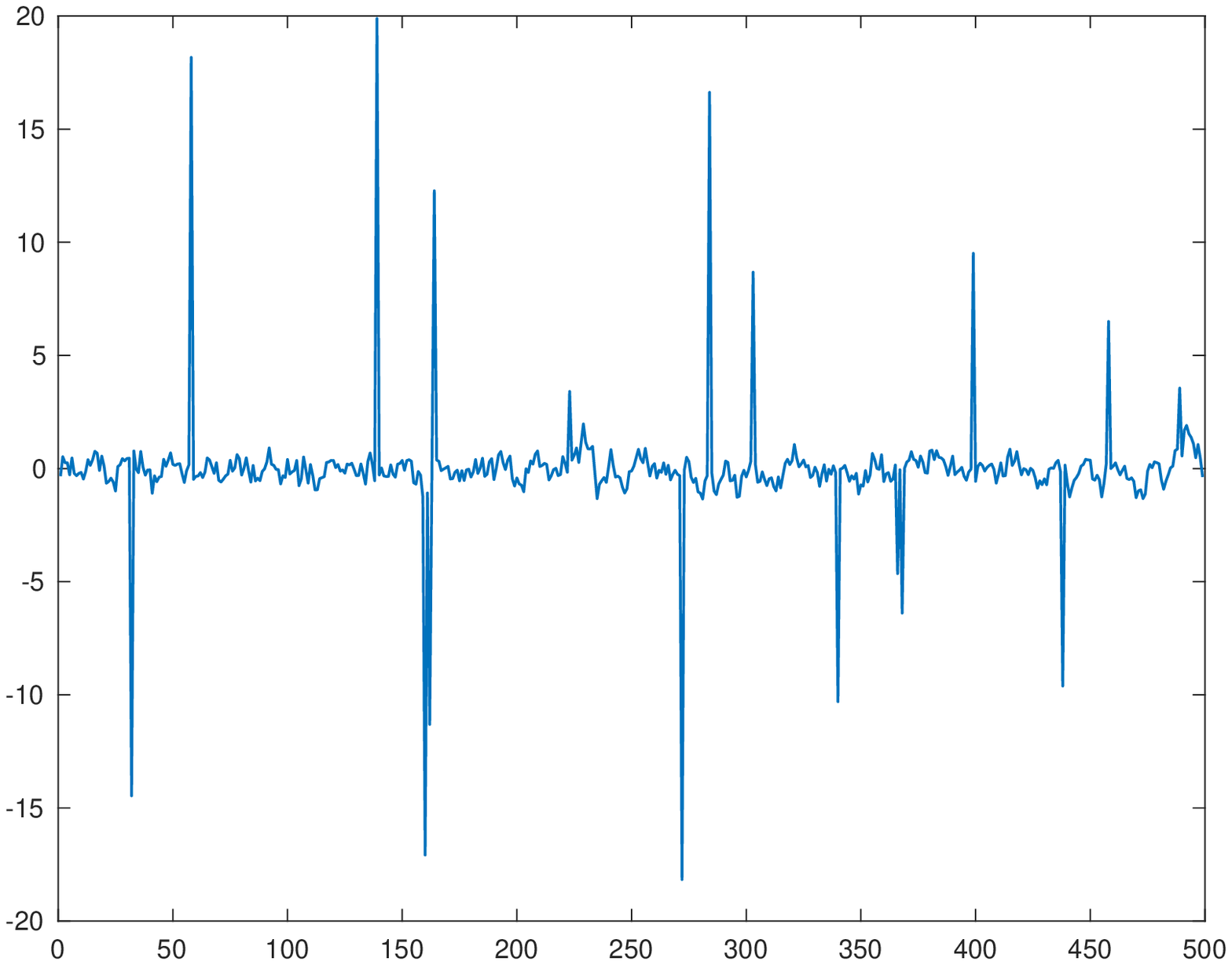} & \includegraphics[scale=0.25]{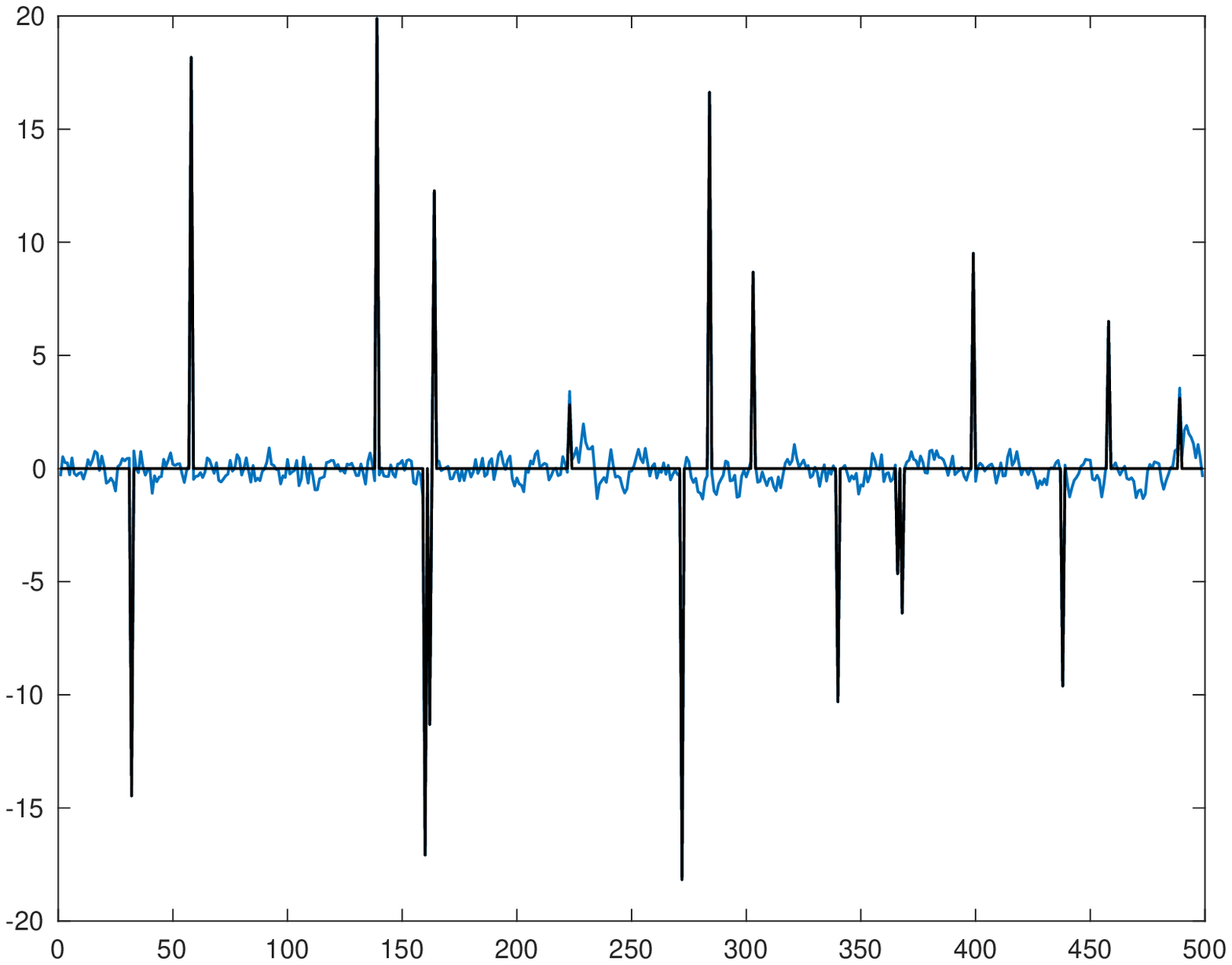} \\
(a) & (b) & (c)
\end{tabular}
\caption{Algorithm \ref{alg:HPDG}. (a) The true sparse representation of the signal, (b) $B\x{k} + \frac{1}{\sigma}\theta^{(k)}$ for $k = 10$, and (c) the update $\prox_{\sigma^{-1} \phee_\alpha}(B\x{k} + \frac{1}{\sigma}\theta^{(k)})$.}\label{fig:PDHGupdates}
\end{center}
\end{figure}

In summary, each algorithm reduces bias differently: Algorithm \ref{alg:PD} emphasizes the signal features of each primal iterate, Algorithm \ref{alg:DCA} consists of Bregman-like iterations which incorporate the boosting term into the noisy signal, and Algorithm \ref{alg:HPDG} uses the form of $\prox_{\phee_\alpha}$ directly. However, in each case we see that the inclusion of the envelope works to preserve signal features, either directly (as in the first two algorithms) or implicitly (as in the last algorithm).

\section{Numerical Experiments}\label{sec:experiments}

In this section, we specify the matrix $B$, the function $\varphi$, and the set $C$ in model~\eqref{eq:Primal} so that the resulting model is suitable for image denoising.

We choose the matrix $B$ of size $2N^2 \times N^2$ through an $N \times N$ matrix $D$ as follows:
$$
B := \begin{bmatrix}\id_N \otimes D \\  D\otimes \id_N\end{bmatrix} \quad \mbox{with} \quad D:=\begin{bmatrix}
0&&&\\
-1&1&&\\
&\ddots&\ddots&\\
&&-1&1
\end{bmatrix},
$$
where $\id_N$ is the $N \times N$ identity matrix and the notation $P \otimes Q$ denotes the Kronecker product of matrices $P$ and $Q$. We know that $\|B\|^2= 8 \sin^2 \frac{(N-1)\pi}{2N} <1$ (see, e.g., \cite{Micchelli-Shen-Xu:IP-11}).

Let $u$ be a vector in $\mathbb{R}^{2N^2}$. We choose $\varphi: \mathbb{R}^{2N^2} \rightarrow \mathbb{R}$ as a compositional norm given in \eqref{def:exampe2} with $\omega_j=\{j, N^2+j\}$, that is,
\begin{equation*}\label{eq:way2:varphi-B}
\varphi(u):=\sum_{j=1}^{N^2} \left\|\begin{bmatrix} u_j \\ u_{N^2+j} \end{bmatrix}\right\|, \quad u \in \mathbb{R}^{2N^2}.
\end{equation*}

With $B$ and $\varphi$ given in the above, $\varphi(Bx)$ is called the total variation of the image $x$ in $\mathbb{R}^{N^2}$, and the pair of $\varphi(Bx)$ with indices in $\omega_j$ is essentially the discrete gradient of the image at the $j$-th pixel. Here, $x$ is the vectorization of an image formed by stacking the columns of this image into a single column vector. For easier reading without causing ambiguity, an image is treated as a two-dimensional array and a one-dimensional vector interchangeably. Finally, since all pixel values of a gray-scale image are in $[0, 255]$, we choose $C:=[0, 255]^{N^2}$ for images in $\mathbb{R}^{N^2}$.

Prior to applying Algorithm~\ref{alg:PD} (PD), Algorithm~\ref{alg:DCA} (DCA), and Algorithm~\ref{alg:HPDG} (PDHG) for model~\eqref{eq:Primal}, we also need to know the proximity operators of the functions $\varphi$ and $\iota_C$. The proximity operator of $\varphi_\alpha$ is given in \eqref{prox-falpha-ex2}.  From the Moreau identify and \eqref{prox-ex2}, we know that for any $u \in \mathbb{R}^{2N^2}$
$$
\prox_{\sigma \varphi^*}(u)=u-\sigma \prox_{\sigma^{-1} \varphi}(\sigma u)=\sum_{j=1}^{N^2} I^\top_{w_j} \cdot \mathrm{proj}_{[0,1]}(\|I_{\omega_j}u\|) \cdot \frac{I_{\omega_j}u}{\|I_{\omega_j}u\|},
$$
which does not depend on $\sigma$.
This formula says that for each pair of $u$ with indices $\omega_j$, its projection onto the unit ball centered at the origin is the pair of $\prox_{\sigma \varphi^*}(u)$ with the same indices. For the indicator function $\iota_C$, $\prox_{\iota_C}=\mathrm{proj}_C$ which will send the values in a vector larger than 255 or lower than 0 to 255 and 0, respectively.

%With the function $\varphi$ and $B$ given above, $\varphi \circ B (u)$  is the well-known total variation.
%It is clear that with the function $f$ above, equation~\eqref{eq:main-goal} reduces to the ROF model \eqref{eq:ROF}. We shall develop algorithms for model (\ref{eq:main-goal}) with the choices of $f$ and $D$ described above.
%
%It is not difficult to show that all eigenvalue of $N^2 \times N^2$ matrix $D^t D$ are in the interval $[0, 8)$.

For comparison, we include the ROF model which is a special case of model~\eqref{eq:PrimalDual_Generic_Model} with $F=\frac{1}{2\lambda}\|\cdot-z\|^2$, $G=\iota_C$, and $H=\varphi$. This model is solved by the iterative scheme given in \eqref{alg:PD1}-\eqref{alg:PD3}.  The corresponding algorithm is referred to as ROF-TV algorithm.

In the rest of this section, we present all parameters used in Algorithms ROF-TV, PD, DCA, and PDHG, and compare their numerical performance for image denoising.

\subsection{Parameters and Stopping Criterion}
We first talk about the parameters related to the underlying models, then discuss the parameters associated with each algorithm, and finally describe the stopping criterion for all algorithms.

Model~\eqref{eq:Primal} involves two parameters $\lambda$ and $\alpha$. It is well known that the regularization parameter $\lambda$ varies according to the noise level of the noisy image to be denoised. From Proposition~\ref{prop:Lips}, we know that model~\eqref{eq:Primal} is strictly convex when $\alpha>\lambda \|B\|^2$. Therefore, in our experiments, we always choose $\alpha=1.5\lambda \|B\|^2$ for each given $\lambda$.

Methods of ROF-TV, PD, and DCA all exploit the iterative scheme \eqref{alg:PD1}-\eqref{alg:PD3} for which the proper values of the parameters $\sigma$, $\tau$, and $\rho$ are to be assigned. We use the model for TV algorithm as example to show how set these parameters. We reformulate the associated model~\eqref{eq:PrimalDual_Generic_Model} with $F=\frac{1}{2\lambda}\|\cdot-z\|^2$, $G=\iota_C$, and $H=\varphi$ without changing its minimizer, to the one with $F=\frac{1}{2}\|\cdot-z\|^2$, $G=\iota_C$, and $H=\lambda\varphi$. In our simulations, we choose $\sigma=0.1$, $\tau=0.99/(0.5+\sigma\|B\|^2)$, and $\rho=1$. With these chosen parameters, the sequences of $\{\x{k}\}_{k\in \mathbb{N}}$, generated ROF-TV, PD, and DCA, converge to the solutions of the corresponding optimization models, respectively.

For PDHG, we choose $\sigma=2/\alpha$, $\tau=0.99/(\sigma \|B\|^2)$, and $\rho=1$. Then, the convergence of the sequence of $\{\x{k}\}_{k\in \mathbb{N}}$,  generated by PDHG, is the consequence of Theorem~\ref{thm:HPDG}.

Iterations in the algorithms of ROF-TV, PD, DCA, and PDHG are terminated whenever the one of the following two conditions occurs: the maximum number of iterations has been exceeded or
$$
\|\x{k+1}-\x{k}\|/\|\x{k}\| \leq \texttt{tol},
$$
where $\texttt{tol}$ denotes a prescribed tolerance value. In our experiments, we set $\texttt{tol} =10^{-4}$. For  Algorithms TV, PD, and PDHG, the maximum number of iterations is set to be 300. For Algorithm DCA, there are basically two levels of looping: outer loop and inner loop. The outer loop refers to the procedure of generating $\y{k}$ and $\x{k+1}$ via \eqref{eq:DC-1} and \eqref{eq:DC-2}, respectively. The inner loop is used to find $\x{k+1}$ via an iterative scheme. We set the maximum number of iterations for the outer loop to be 10, and 100 for the inner loop.

\subsection{Numerical Results for Denoising}
In our experiments, we choose the images of ``Cameraman'' (Figure~\ref{fig:camera-visualquality}(a)),  ``House'' (Figure~\ref{fig:house-visualquality}(a)), and ``Peppers'' (Figure~\ref{fig:peppers-visualquality}(a)) with size $256 \times 256$,  as the original images $x$.  The noisy images (for example, see, Figures~\ref{fig:camera-visualquality}(b), \ref{fig:house-visualquality}(b), and \ref{fig:peppers-visualquality}(b)) are modeled as
$$
z = x+\epsilon
$$
with $\epsilon$ being the white Gaussian noise of standard derivation $\eta$. The noise at level $\eta$ being 15, 20, and 25 will be added to the test images to evaluate the performance of the proposed model and the corresponding algorithms. The quality of the denoised image $\widetilde{x}$ obtained from a denoising algorithm is measured by the peak-signal-to-noise ratio (PSNR)
$$
\mathrm{PSNR}:=20 \log_{10} \left(\frac{255}{256\|x-\widetilde{x}\|}\right).
$$

In Table~\ref{Table:cameraman-noise-lambda}, we reported the average PSNR values of the denoised images of ``Cameraman'' and the CPU time consumed by all tested algorithms for various values of $\lambda$ over 20 realizations at the same noise level. Note that algorithms PD, DCA, and PDHG are developed to find a solution to model~\eqref{eq:Primal}. From this table, we observed that PDHG performs always better than PD and DCA in terms of both the PSNR values of the denoised image and the CPU time used. The same conclusion can be drawn for the image of ``House'' as shown in Table~\ref{Table:house-noise-lambda}. Numerical results for the image of ``Peppers'' are listed in Table~\ref{Table:peppers-noise-lambda}. In this case, DCA produced better denoised images than PD and DCA in terms of the PSNR values, however, using much more CPU times. From the PSNR values in these tables, we can see that the quality of the denoised images via the optimization model penalized by the proposed structured promoting functions (solved by PD, DCA, and PDHG) is better than that with the classical ROF total variation model. For noise at level $\eta=20$, Figure~\ref{fig:psnr-cpu-camera}(a) illustrates the PSNR values of the denoised ``Cameraman'' images via all methods over 20 noise realizations while Figure~\ref{fig:psnr-cpu-camera}(b) presents the used CPU times. We can see that PDHG consistently produces the highest quality images with the least CPU time used.

Figure~\ref{fig:camera-visualquality} shows the denoised images when all algorithms apply to the noisy image of ``Cameraman'' with noise level of $20$. For the same noise level, Figure~\ref{fig:house-visualquality} shows the denoised images of ``House'' while Figure~\ref{fig:peppers-visualquality} shows the denoised images of ``Peppers''. Although all denoised images look similar, visually, we can see that the denoised images by PDHG have less artifacts than the others.

%%%%%%%%%%%%%%%%%%%%%%%%%%%%%%%%%%%%%%%%%%%%%%%%%%%%%%%%%%%%%%%%%%%%%%%%%%%%%%%%%%%%%%%%%
\begin{table}[H]
\caption{Numerical results of  TV, PD, DCA, and PDHG methods for the image of ``Cameraman''. The pair $(\cdot, \cdot)$ is used to report
both the PSNR value (the first number) of a denoised image and the CPU time (the second number). }
\begin{center}\small
\begin{tabular}{c|cccc}\hline
$\lambda$ &ROF&PD&DCA&PDHG \\ \hline
\multicolumn{5}{c} {White Gaussian noise with standard deviation 15}\\ \hline
 9&(30.32, 0.19)&(30.20, 0.18)&(30.17, 1.41)&(30.22, 0.18)\\
10&(30.30, 0.20)&(30.50, 0.21)&(30.44, 1.51)&(30.52, 0.18)\\
11&(30.18, 0.22)&(30.62, 0.22)&(30.54, 1.52)&(\textbf{30.65}, 0.18)\\
12&(30.01, 0.24)&(30.61, 0.24)&(30.52, 1.59)&(30.63, 0.19)\\
13&(29.80, 0.26)&(30.50, 0.27)&(30.41, 1.54)&(30.52, 0.20)\\
\hline
\multicolumn{5}{c} {White Gaussian noise with standard deviation 20}\\ \hline
14&(28.79, 0.25)&(29.00, 0.26)&(28.92, 1.70)&(29.02, 0.23)\\
15&(28.73, 0.26)&(29.10, 0.28)&(29.02, 1.80)&(29.13, 0.22)\\
16&(28.64, 0.28)&(29.13, 0.30)&(29.03, 1.94)&(\textbf{29.16}, 0.22)\\
17&(28.52, 0.30)&(29.09, 0.31)&(28.99, 2.15)&(29.11, 0.22)\\
18&(28.38, 0.33)&(29.00, 0.34)&(28.91, 1.91)&(29.03, 0.23)\\
\hline
\multicolumn{5}{c} {White Gaussian noise with standard deviation 25}\\ \hline
18&(27.67, 0.38)&(27.87, 0.41)&(27.78, 2.55)&(27.89, 0.38)\\
19&(27.65, 0.38)&(27.97, 0.39)&(27.87, 3.09)&(28.04, 0.27)\\
20&(27.60, 0.33)&(28.01, 0.36)&(27.90, 2.28)&(\textbf{28.04}, 0.26)\\
21&(27.43, 0.38)&(27.96, 0.39)&(27.85, 2.47)&(27.99, 0.26)\\
22&(27.33, 0,40)&(27.89, 0.41)&(27.78, 2.34)&(27.91, 0.26)\\
 \hline
\end{tabular}
\end{center}
\label{Table:cameraman-noise-lambda}
\end{table}
%%%%%%%%%%%%%%%%%%%%%%%%%%%%%%%%%%%%%%%%%%%%%%%%%%%%%%%%%%%%%%%%%%%%%%%%%%%%%%%%%%%%%%%%%

%%%%%%%%%%%%%%%%%%%%%%%%%%%%%%%%%%%%%%%%%%%%%%%%%%%%%%%%%%%%%%%%%%%%%%%%%%%%%%%%%%%%%%%%%
\begin{table}[H]
\caption{Numerical results of  TV, PD, DCA, and PDHG methods for the image of ``House''. The pair $(\cdot, \cdot)$ is used to report
both the PSNR value (the first number) of a denoised image and the CPU time (the second number). }
\begin{center}\small
\begin{tabular}{c|cccc}\hline
$\lambda$ &ROF&PD&DCA&PDHG \\ \hline
\multicolumn{5}{c} {White Gaussian noise with standard deviation 15}\\ \hline
 9&(32.05, 0.18)&(31.32, 0.18)&(31.30, 1.26)&(31.45, 0.15)\\
10&(32.30, 0.21)&(31.90, 0.21)&(31.30, 1.40)&(31.93, 0.16)\\
11&(32.40, 0.20)&(32.26, 0.22)&(32.18, 1.25)&(32.30, 0.16)\\
12&(32.42, 0.21)&(32.46, 0.24)&(32.35, 1.32)&(32.50, 0.16)\\
13&(32.37, 0.24)&(32.53, 0.27)&(32.41, 1.41)&(\textbf{32.56}, 0.17)\\

\hline
\multicolumn{5}{c} {White Gaussian noise with standard deviation 20}\\ \hline
14&(30.94, 0.24)&(30.64, 0.26)&(30.55, 1.62)&(30.67, 0.18)\\
15&(31.07, 0.25)&(30.95, 0.27)&(30.83, 1.71)&(30.99, 0.17)\\
16&(31.13, 0.27)&(31.15, 0.29)&(31.02, 1.57)&(31.19, 0.18)\\
17&(31.14, 0.28)&(31.27, 0.31)&(31.12, 1.67)&(31.31, 0.18)\\
18&(31.11, 0.29)&(31.31, 0.33)&(31.16, 2.18)&(\textbf{31.35}, 0.19)\\
\hline
\multicolumn{5}{c} {White Gaussian noise with standard deviation 25}\\ \hline
19&(30.01, 0.36)&(29.91, 0.41)&(29.77, 2.57)&(29.94, 0.26)\\
20&(30.10, 0.41)&(30.11, 0.48)&(29.95, 2.37)&(30.15, 0.29)\\
21&(30.14, 0.37)&(30.24, 0.42)&(30.07, 2.28)&(30.29, 0.26)\\
22&(30.15, 0.36)&(30.33, 0.40)&(30.15, 2.22)&(30.37, 0.24)\\
23&(30.14, 0.36)&(30.36, 0.43)&(30.18, 2.31)&(\textbf{30.41}, 0.24)\\
 \hline
\end{tabular}
\end{center}
\label{Table:house-noise-lambda}
\end{table}
%%%%%%%%%%%%%%%%%%%%%%%%%%%%%%%%%%%%%%%%%%%%%%%%%%%%%%%%%%%%%%%%%%%%%%%%%%%%%%%%%%%%%%%%%

%%%%%%%%%%%%%%%%%%%%%%%%%%%%%%%%%%%%%%%%%%%%%%%%%%%%%%%%%%%%%%%%%%%%%%%%%%%%%%%%%%%%%%%%%
\begin{table}[H]
\caption{Numerical results of  TV, PD, DCA, and PDHG methods for the image of ``Peppers''. The pair $(\cdot, \cdot)$ is used to report
both the PSNR value (the first number) of a denoised image and the CPU time (the second number). }
\begin{center}\small
\begin{tabular}{c|cccc}\hline
$\lambda$ &ROF&PD&DCA&PDHG \\ \hline
\multicolumn{5}{c} {White Gaussian noise with standard deviation 15}\\ \hline
 9&(31.13, 0.21)&(30.48, 0.20)&(30.58, 1.45)&(30.47, 0.19)\\
10&(31.27, 0.24)&(30.91, 0.23)&(30.01, 1.60)&(30.89, 0.20)\\
11&(31.31, 0.24)&(31.18, 0.25)&(31.28, 1.45)&(31.15, 0.18)\\
12&(31.26, 0.27)&(31.29, 0.31)&(31.40, 1.62)&(31.28, 0.21)\\
13&(31.16, 0.30)&(31.32, 0.31)&(\textbf{31.43}, 0.77)&(31.30, 0.19)\\

\hline
\multicolumn{5}{c} {White Gaussian noise with standard deviation 20}\\ \hline
14&(29.27, 0.26)&(29.50, 0.27)&(29.58, 1.76)&(29.48, 0.21)\\
15&(29.81, 0.28)&(29.70, 0.31)&(29.78, 1.94)&(29.69, 0.20)\\
16&(29.80, 0.36)&(29.82, 0.38)&(29.91, 1.92)&(29.80, 0.24)\\
17&(29.75, 0.36)&(29.87, 0.40)&(\textbf{29.96}, 2.06)&(29.85, 0.24)\\
18&(29.68, 0.38)&(39.87, 0.47)&(29.96, 2.25)&(29.85, 0.25)\\
\hline
\multicolumn{5}{c} {White Gaussian noise with standard deviation 25}\\ \hline
19&(28.66, 0.32)&(28.55, 0.37)&(28.63, 2.23)&(28.54, 0.22)\\
20&(28.67, 0.36)&(28.67, 0.40)&(28.74, 2.38)&(28.65, 0.23)\\
21&(28.65, 0.38)&(28.73, 0.42)&(28.80, 2.17)&(28.71, 0.23)\\
22&(28.61, 0.39)&(28.75, 0.45)&(\textbf{28.83}, 2.34)&(28.73, 0.25)\\
23&(28.55, 0.42)&(28.74, 0.47)&(28.82, 2.42)&(28.72, 0.25)\\
 \hline
\end{tabular}
\end{center}
\label{Table:peppers-noise-lambda}
\end{table}
%%%%%%%%%%%%%%%%%%%%%%%%%%%%%%%%%%%%%%%%%%%%%%%%%%%%%%%%%%%%%%%%%%%%%%%%%%%%%%%%%%%%%%%%%

%%%%%%%%%%%%%%%%%%%%%%%%%%%%%%%%%%%%%%%%%%%%%%%%%%%%%%%%%%%%%%%%%%%%%%%%%%%%%%%%%%%%%%%%%
\begin{figure}[H]
\centering
\begin{tabular}{cc}
  \includegraphics[width=2.8in]{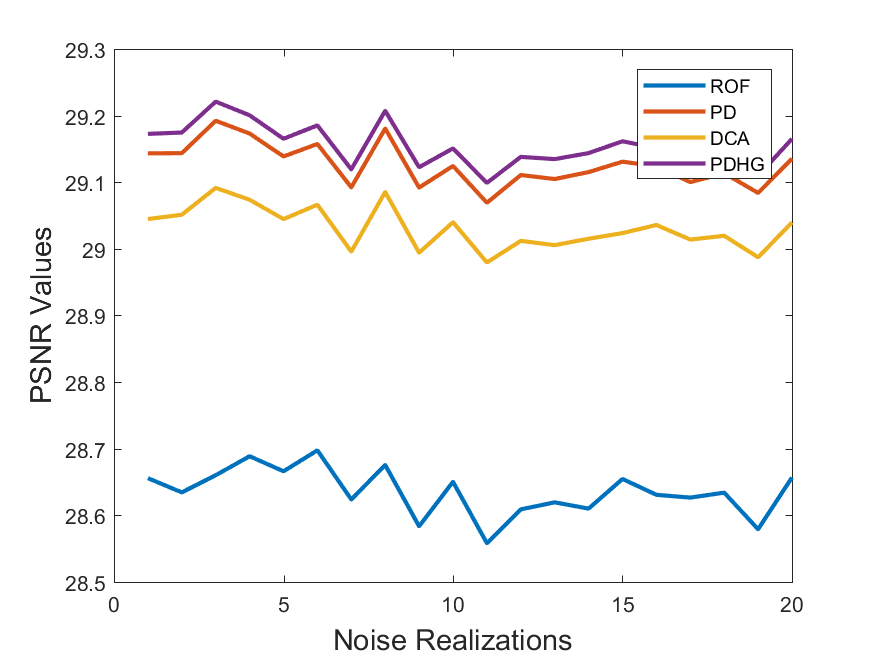}&
  \includegraphics[width=2.8in]{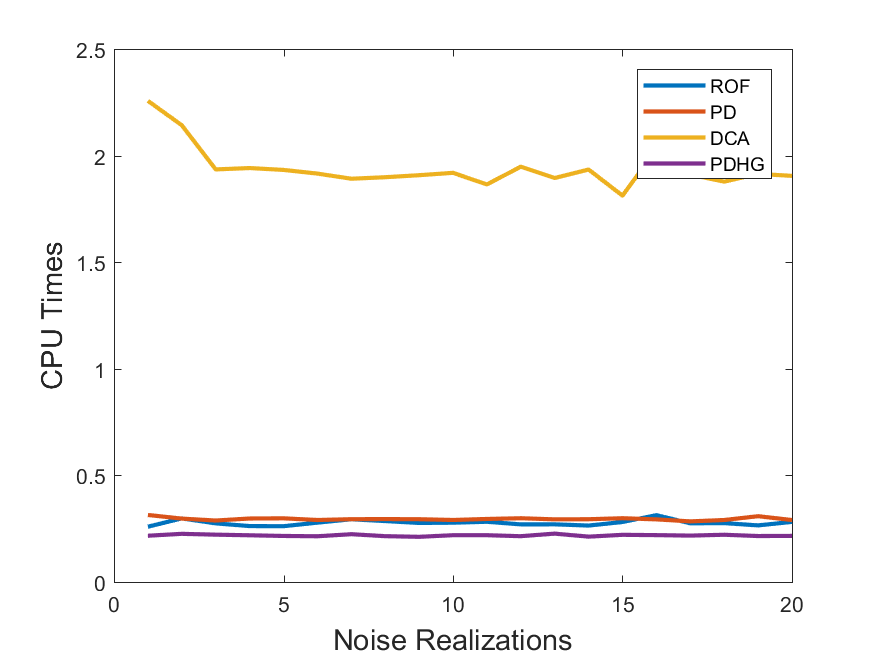}\\
  (a)&(b)
\end{tabular}
\caption{(a) The PSNR value of the denoised image of ``Cameraman'' for each Gaussian noise realization with standard deviation 20; and (b) the CPU time consumed for various algorithms.  The regularization parameter $\lambda$ is $15$ for both ROF model and \eqref{eq:Primal}.}
\label{fig:psnr-cpu-camera}
\end{figure}
%%%%%%%%%%%%%%%%%%%%%%%%%%%%%%%%%%%%%%%%%%%%%%%%%%%%%%%%%%%%%%%%%%%%%%%%%%%%%%%%%%%%%%%%%

%%%%%%%%%%%%%%%%%%%%%%%%%%%%%%%%%%%%%%%%%%%%%%%%%%%%%%%%%%%%%%%%%%%%%%%%%%%%%%%%%%%%%%%%%
\begin{figure}[H]
\centering
\begin{tabular}{ccc}
  \includegraphics[width=1.5in]{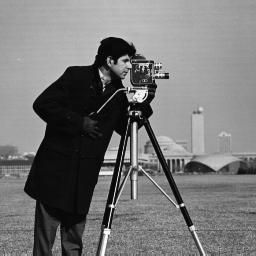}&
  \includegraphics[width=1.5in]{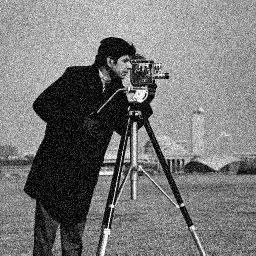}&
  \includegraphics[width=1.5in]{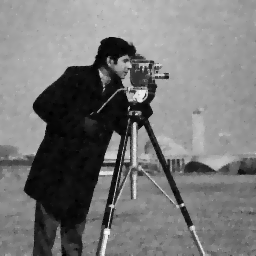}\\
  (a)&(b)&(c)\\
  \includegraphics[width=1.5in]{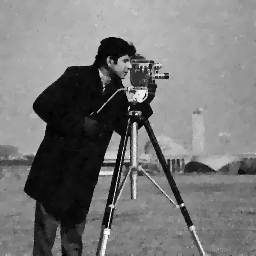}&
  \includegraphics[width=1.5in]{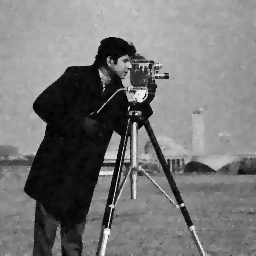}&
  \includegraphics[width=1.5in]{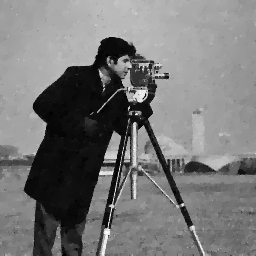}\\
  (d)&(e)&(f)
\end{tabular}
\caption{(a) The image of ``Cameraman''; (b) the image of ``Cameraman'' corrupted by Gaussian noise of standard deviation $20$; (c) the denoised image using  the ROF denoising model; the denoised images using model~\eqref{eq:Primal} by (d) PD; (e) DCA; and (f) PDHG, respectively.  The regularization parameter $\lambda$ for both models is 16.}
\label{fig:camera-visualquality}
\end{figure}
%%%%%%%%%%%%%%%%%%%%%%%%%%%%%%%%%%%%%%%%%%%%%%%%%%%%%%%%%%%%%%%%%%%%%%%%%%%%%%%%%%%%%%%%%

%%%%%%%%%%%%%%%%%%%%%%%%%%%%%%%%%%%%%%%%%%%%%%%%%%%%%%%%%%%%%%%%%%%%%%%%%%%%%%%%%%%%%%%%%
\begin{figure}[H]
\centering
\begin{tabular}{ccc}
  \includegraphics[width=1.5in]{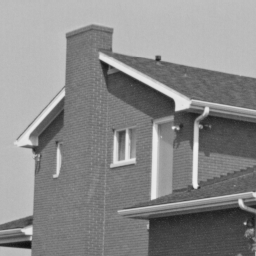}&
  \includegraphics[width=1.5in]{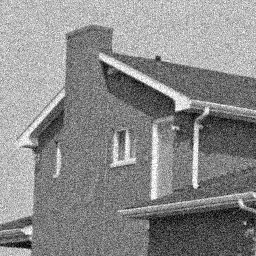}&
  \includegraphics[width=1.5in]{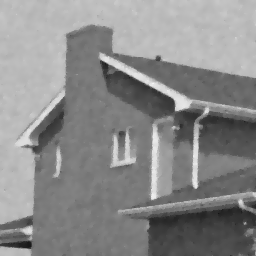}\\
  (a)&(b)&(c)\\
  \includegraphics[width=1.5in]{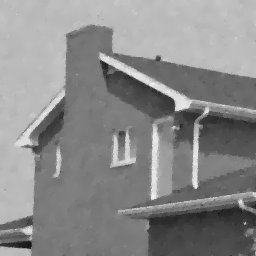}&
  \includegraphics[width=1.5in]{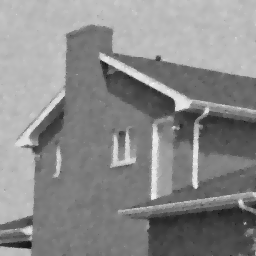}&
  \includegraphics[width=1.5in]{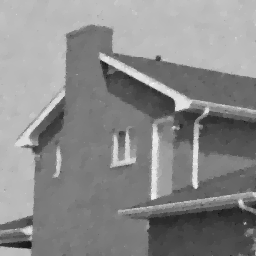}\\
  (d)&(e)&(f)
\end{tabular}
\caption{(a) The image of ``House''; (b) the image of ``House'' corrupted by Gaussian noise of standard deviation $20$; ((c) the denoised image using  the ROF denoising model; the denoised images using model~\eqref{eq:Primal} by (d) PD; (e) DCA; and (f) PDHG, respectively.  The regularization parameter $\lambda$ for both models is 18.}
\label{fig:house-visualquality}
\end{figure}
%%%%%%%%%%%%%%%%%%%%%%%%%%%%%%%%%%%%%%%%%%%%%%%%%%%%%%%%%%%%%%%%%%%%%%%%%%%%%%%%%%%%%%%%%

%%%%%%%%%%%%%%%%%%%%%%%%%%%%%%%%%%%%%%%%%%%%%%%%%%%%%%%%%%%%%%%%%%%%%%%%%%%%%%%%%%%%%%%%%
\begin{figure}[H]
\centering
\begin{tabular}{ccc}
  \includegraphics[width=1.5in]{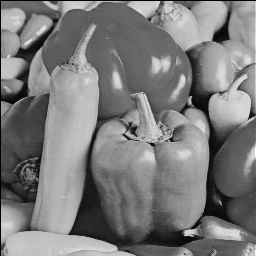}&
  \includegraphics[width=1.5in]{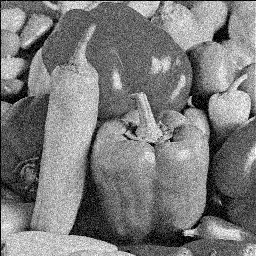}&
  \includegraphics[width=1.5in]{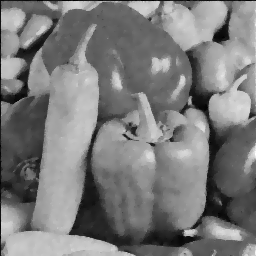}\\
  (a)&(b)&(c)\\
  \includegraphics[width=1.5in]{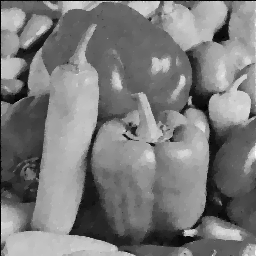}&
  \includegraphics[width=1.5in]{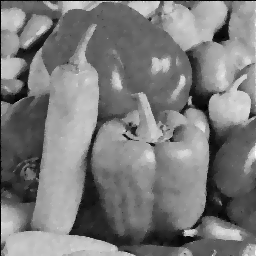}&
  \includegraphics[width=1.5in]{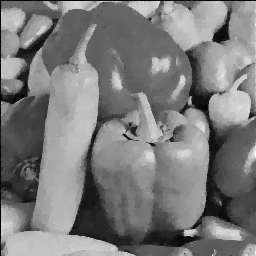}\\
  (d)&(e)&(f)
\end{tabular}
\caption{(a) The image of ``Peppers''; (b) the image of ``Peppers'' corrupted by Gaussian noise of standard deviation $20$; (c) the denoised image using  the ROF denoising model; the denoised images using model~\eqref{eq:Primal} by (d) PD; (e) DCA; and (f) PDHG, respectively.  The regularization parameter $\lambda$ for both models is 17.}
\label{fig:peppers-visualquality}
\end{figure}
%%%%%%%%%%%%%%%%%%%%%%%%%%%%%%%%%%%%%%%%%%%%%%%%%%%%%%%%%%%%%%%%%%%%%%%%%%%%%%%%%%%%%%%%%

\section{Concluding Remarks}\label{sec:conclusions}
%    This paper discussed various algorithms for a model which involves a structured sparsity promoting
%    function. The development of these algorithms is motivated from the intrinsic structures of
%    the structured sparsity promoting function associated with the model. We established the
%    convergence of all algorithms.

We propose a general denoising model based on structured SPFs, as introduced in \cite{Shen-Suter-Tripp:2019}, and discuss various algorithms for this model. The development of these algorithms is motivated by the intrinsic structure of the model which makes it quite flexible and allows us to easily determine the convergence of the proposed methods. We illustrate the effectiveness of the proposed model by applying the modified ROF-TV model to the problem of image denoising. We see that in comparison to the traditional ROF-TV model, we are able to achieve greater accuracy without increased computation time in most cases.

Future work will feature variations of this denoising model; in particular, we are interested in the addition of a blurring kernel and applications to compressed sensing. Moreover, we believe that the structure of our proposed SPF's can be used to improve convergence results for nonconvex algorithms. Semiconvexity (or, more generally, prox-regularity) has been leveraged in this way here and elsewhere (e.g. \cite{Deng-Yin:JSC:2016}, \cite{Mollenhoff:Strekalovskiy:Moeller:Cremers:SIAMIS:2015}), but there are many other properties of these functions which may be useful.

%citations should be PDHG paper and ADMM paper by Wotao Yin

\section*{Disclaimer and Acknowledgment of Support}
Any opinions, findings and conclusions or recommendations expressed in this material are
those of the authors and do not necessarily reflect the views of AFRL (Air Force Research
Laboratory). The work of L. Shen was supported in part
by the National Science Foundation under grant DMS-1913039.

\bibliographystyle{siam}
%\bibliography{C:/Users/lshen03/Dropbox/shen}

\end{document}